\newtheorem{thm}{Theorem}[section]
\newtheorem{prop}[thm]{Proposition}
\newtheorem{Lemma}[thm]{Lemma}
\newtheorem{Remark}[thm]{Remark}
\def\R{{\mathbb R}}
\def\N{{\NN}}
\def\Z{{\mathbb Z}}
\def\G{{\Gamma}}
\def\NN{{\mathbb N}}
\def\MM{{\mathcal M}}
\def\Mm{\Gamma}
\def\Xx{{\mathcal{X}}}
\def\cdwell{c_{\it dw}}
\def\cduff{c_{\it D}}
\def\Rdwell{R_{\it dw}}
\def\Rduff{R_{\it D}}
\def\Mdwell{M_{\it dw}}
\def\Mduff{M_{\it D}}
\newcommand{\dist}{{\rm dist}\,}
\newcommand{\diam}{{\rm diam}\,}
\newlength{\bibitemsep}\setlength{\bibitemsep}{.7\baselineskip plus .5\baselineskip minus .05\baselineskip}
\newlength{\bibparskip}\setlength{\bibparskip}{0pt}
\let\oldthebibliography\thebibliography
\renewcommand\thebibliography[1]{%
  \oldthebibliography{#1}%
  \setlength{\parskip}{\bibitemsep}%
  \setlength{\itemsep}{\bibparskip}%
}
\begin{document}
\title{Prescribed energy connecting orbits for gradient systems}
\date{January 21, 2019}

\author{Francesca Alessio}
\address{Dipartimento di Ingegneria Industriale e Scienze Matematiche, 
Universit\`a Politecnica delle Marche,
Via Brecce Bianche, I-60131 Ancona, Italy.}
\email{f.g.alessio@univpm.it}

\author{Piero Montecchiari}
\address{Dipartimento di Ingegneria Civile, Edile e Architettura, 
Universit\`a Politecnica delle Marche,
Via Brecce Bianche, I-60131 Ancona, Italy.}
\email{p.montecchiari@univpm.it}

\author{Andres Zuniga}
\address{CEREMADE, UMR CNRS ${\rm n}^{\circ}$~7534, Universit\'e Paris-Dauphine, PSL Research University, Place de Lattre de Tassigny, 75775 Paris Cedex 16, France} 
\email{zuniga@ceremade.dauphine.fr}

\begin{abstract} We are concerned with conservative systems $\ddot q=\nabla V(q)$, $q\in\R^{N}$ for a general class of potentials $V\in C^1(\R^N)$. Assuming that a given sublevel set $\{V\leq c\}$ splits in the disjoint union of two closed subsets $\mathcal{V}^{c}_{-}$ and $\mathcal{V}^{c}_{+}$, for some $c\in\R$, we establish the existence of bounded solutions $q_{c}$ to the above system with energy equal to $-c$ whose trajectories connect $\mathcal{V}^{c}_{-}$ and $\mathcal{V}^{c}_{+}$. The solutions are obtained through an energy constrained variational method, whenever mild coerciveness properties are present in the problem. 
The {\sl connecting orbits} are classified into brake, heteroclinic or homoclinic type, depending on the behavior of $\nabla V$ on $\partial \mathcal{V}^{c}_{\pm}$.
Next, we illustrate applications of the existence result to double-well potentials $V$,  and for potentials associated to systems of duffing type and of multiple-pendulum type. In each of the above cases we prove some convergence results of the family of solutions $(q_{c})$. 
\end{abstract}

\maketitle
{\bf 2010 Mathematics Classification.} 34C25, 34C37, 49J40, 49J45.\\

{\bf Keywords.} 
	 Conservative systems, energy constraints, variational methods, brake orbits, \par
	 \hskip 54pt  homoclinic orbits, heteroclinic orbits, convergence of solutions.

\medskip

\section{Introduction}

In the present paper we are concerned with second order conservative systems 
\begin{equation}\label{eqn:gradSystem}
\ddot q=\nabla V(q),
\end{equation}
where potentials $V\in C^1(\R^N)$ are considered, for any dimension $N\geq 2$.\medskip 

We study the existence of particular solutions to \eqref{eqn:gradSystem}, for a class of potentials $V$ for which there exists some value $c\in\R$ so that the sublevel set 
$$\mathcal{V}^{c}:=\{x\in\R^N: V(x)\leq c\},$$ is the union of two disjoint subsets. More precisely, we assume that for some $c\in\R$,\smallskip
\begin{enumerate}[label=$(\mathbf{V}^{c})$,leftmargin=23pt,labelsep=.5em]
\item\label{enum:Vc}
There exist $\mathcal{V}^{c}_-,\, \mathcal{V}^c_+\subset\R^N$ closed sets, such that $\mathcal{V}^{c}=\mathcal{V}^{c}_-\cup \mathcal{V}^{c}_+$ and $\dist(\mathcal{V}^{c}_-,\mathcal{V}^{c}_+)>0$,
\end{enumerate}\smallskip
where $\dist(A,B):=\inf\{|x-y|: x\in A,\, y\in B\}$ refers to the Euclidean distance from a set $A\subset\R^{N}$ to a set $B\subset\R^N$.
\bigskip

Provided~\ref{enum:Vc} holds, we look for  {\sl bounded solutions} $q$ of \eqref{eqn:gradSystem} on $\R$ with {\em prescribed mechanical energy} at level $-c$ 
\begin{equation}\label{eqn:energyc}
E_{q}(t):=\tfrac12|\dot q(t)|^{2}-V(q(t))=-c,\;\;\hbox{ for all }\;t\in\R,
\end{equation} 
which in addition {\sl connect} the sets $\mathcal{V}^{c}_-$ and $\mathcal{V}^{c}_+$:
\begin{equation}\label{eqn:condbrake}
\inf_{t\in\R}\dist(q(t),\mathcal{V}^{c}_-)=\inf_{t\in\R}\dist(q(t),\mathcal{V}^{c}_+)=0,
\end{equation}
where $\dist(x,A):=\inf\{|x-y|:y\in A\}$  denotes the distance from a point $x\in\R^N$ to a set $A\subset\R^N$.
\smallskip

To better describe which kind of solutions of \eqref{eqn:gradSystem} satisfying \eqref{eqn:energyc} and \eqref{eqn:condbrake} one can get, it is better to make some simple qualitative reasoning.  
 Note that condition \eqref{eqn:condbrake} imposes $\inf_{t\in\R}\dist(q(t),\mathcal{V}^{c}_{\pm})=0$; this is true if either the solution $q$ touches one (or both) of $\mathcal{V}^{c}_{\pm}$ in a point, or if it accumulates $\mathcal{V}^{c}_{\pm}$ at infinity, that is to say, 
\begin{equation}\label{eqn:accatinfty}
\liminf_{t\to -\infty}\dist(q(t), \mathcal{V}^{c}_{\pm})=0,\;\;\text{ or }\;\;\liminf_{t\to +\infty}\dist(q(t), \mathcal{V}^{c}_{\pm})=0.
\end{equation}
In the first case, there exists a time $t_{0}$ such that $q(t_{0})\in \mathcal{V}^{c}_{\pm}$. In this situation we say that $q(t_{0})$ is a {\sl contact point} between the trajectory $q$ and $\mathcal{V}^{c}_{\pm}$, and that $t_{0}$ is a {\sl contact time}. Let us note right away that if $t_{0}$ is a {\sl contact} time, since $V(q(t_0))\leq c$, then the energy condition \eqref{eqn:energyc} imposes that $V(q(t_0))= c$ and $\dot q(t_0)=0$. Hence $t_0$ is a {\sl turning} time, i.e., $q$ is symmetric with respect to $t_0$. From this we recover that $q$ has at most two contact points.\smallskip\par\noindent

The {\sl connecting solutions} between $\mathcal{V}^{c}_{\pm}$ can therefore be classified into three types, corresponding to the different number of {\sl contact points} they exhibit. Precisely, we have
\begin{enumerate}[label=(\Roman*), itemsep=.25\baselineskip, leftmargin=3em]
\item\label{enum:classification1} Two {\sl contact points}:  In this case the solution $q$ has one contact point $q(\sigma)$ with  $\mathcal{V}^{c}_-$ and one contact point $q(\tau)$ with  $\mathcal{V}^{c}_+$. We can assume that $\sigma<\tau$ (by reflecting the time if necessary), and that the interval $(\sigma,\tau)$ does not contain other {\sl contact times}. Since the solution is symmetric with respect to both $\sigma$ and $\tau$, it then follows that it has to be periodic, with period $2(\tau-\sigma)$. The solution oscillates back and forth in the configuration space along the arc $q([\sigma,\tau])$, and verifies $V(q(t))>c$ for any $t\in (\sigma,\tau)$. This solution is said to be of {\sl brake orbit} type~(see
\cite{[Saif]}, \cite{[W]}). Let us remark that a {\sl brake orbit} solution has {\em only one} contact point with each set
$\mathcal{V}^{c}_{\pm}$. 
\item\label{enum:classification2} One {\sl contact point}:  In this case the solution $q$ is symmetric with respect to the (unique) {\sl contact} time $\sigma$, resulting that $V(q(t))>c$ for any $t\in\R\setminus\{\sigma\}$ and $q(\sigma)\in\mathcal{V}^{c}_{\pm}$. Moreover $\liminf_{t\to\pm\infty}\dist(q(t), \mathcal{V}^{c}_{\mp})=0$. These solutions are said to be of {\sl homoclinic type}.
\item\label{enum:classification3} No {\sl contact points}: In this last case the absence of contact times implies that $V(q(t))>c$ for any $t\in\R$, being that $\liminf_{t\to-\infty}\dist(q(t),\mathcal{V}^{c}_{\pm})=0$ and $\liminf_{t\to+\infty}\dist(q(t), \mathcal{V}^{c}_{\mp})=0$. These solutions are said to be of {\sl heteroclinic type}.
\end{enumerate}

A great amount of work regards the existence and multiplicity of brake orbits when $c$ is regular for $V$, and the set $\{V\geq c\}$ is non-empty and bounded; see~\cite{[1],[2],[3], [4], [5], [GGP1],[GGP2], [6], [7], [8]}.
 
 A unified approach for the study of general connecting solutions was first made via variational arguments in~\cite{alessio2013stationary} for systems of Allen-Cahn type equations, where the author already builds solutions in the PDE setting analogous to the ones of heteroclinic type, homoclinic type and brake type solutions (cf.~\cite[Theorem 1.2]{alessio2013stationary} for details, and also~\cite{[AlM3bump],[AlMalmost], [AlMbrake], [AlM-NLS], alessio2016brake} for related results and techniques).

Concerning the ODE case,  the problem of existence of {\sl connecting orbits} of~\eqref{eqn:gradSystem} and their classification into heteroclinic, homoclinic and periodic type has been recently studied in~\cite{antonopoulos2016minimizers}, and subsequently in~\cite{fusco2017existence} (see also \cite{fusco2018existence}) for potentials $V\in C^2(\R^N)$ that in addition to $(\mathbf{V}^{c})$ satisfy $\partial \{x\in\R^N:V(x)>c\}$ is compact.

\smallskip
Our approach to the problem is variational and is an adaptation of the arguments developed in~\cite{alessio2013stationary, alessio2016brake} to the  ODE setting. We work on the admissible class
\begin{eqnarray}\label{defMc}
\Gamma_c:=\{q\in H^{1}_{loc}(\R,\R^N): &{\rm (i)}&V(q(t))\geq c\hbox{ for all }t\in\R,\hbox{ and }\\
& {\rm (ii)}&\liminf_{t\to-\infty}\dist(q(t),\mathcal{V}^{c}_-)=\liminf_{t\to+\infty}\dist(q(t),\mathcal{V}^{c}_+)=0\},\notag
\end{eqnarray}
and we look for minimizers in $\Gamma_c$ of the Lagrangian functional 
\begin{equation}\label{eqn:defHc}
J_c(q):=\int_{-\infty}^{+\infty}\tfrac12|\dot q(t)|^{2}+(V(q(t))-c)\, dt.
\end{equation}

Note that  the set $\Gamma_c$ of admissible functions is defined via {\rm(i)} and {\rm(ii)}.  Condition {\rm(i)} constitutes an energy constraint, in that, the function $V(q(t))-c$ is non-negative over $\R$, so the functional $J_{c}$ is well defined and bounded from below on $\Gamma_c$. If $q$ is a minimizer of $J_{c}$ on $\Gamma_c$, then $q$ is a solution of \eqref{eqn:gradSystem} on any interval $I\subset\R$ for which condition {\rm(i)} is strictly satisfied, i.e., $V(q(t))>c$ for all $t\in I$ (see Lemma~\ref{lem:minimisol}). Condition {\rm(ii)} forces $q$ to connect $\mathcal{V}^{c}_-$ to $\mathcal{V}^{c}_+$.  Indeed, if $q$ is a minimizer of $J_{c}$ on $\Gamma_{c}$ there  exists an interval $I=(\alpha,\omega)\subset\R$ (possibly with $\alpha=-\infty$ or $\omega=+\infty$) for which
$V(q(t))>c$ for any $t\in I$ (see Lemma \ref{lem:sol-W-alphabeta}), and 
$$\lim_{t\to\alpha^{+}}\dist(q(t),\mathcal{V}^{c}_-)=\lim_{t\to\omega^{-}}\dist(q(t),\mathcal{V}^{c}_+)=0.$$
Thus, $q$ is a solution of \eqref{eqn:gradSystem} on $I$ and $I$ is a {\sl connecting time interval},  that is to say, an open interval $I\subset\R$ (not necessarily bounded) whose eventual extremes are {\sl contact} times. The existence of a solution to our problem is then obtained by recognizing that the energy of such a minimizer $q$ restricted to $I$ equals $-c$ (see Lemma \ref{lem:energia}), from which we can proceed (by reflection and periodic continuation) to construct our entire connecting solution. Hence, we obtain brake orbit when the connecting interval $I$ is bounded ($\alpha,\omega\in\R$), a homoclinic  when $I$  is an half-line  (precisely one of $\alpha$ and $\omega$ is finite) and finally a heteroclinic if $I$ is the entire real line.\smallskip

In the present paper, we first establish a general existence result for solutions satisfying the aforementioned properties (see section~\S2). In fact, the existence of a minimizer of $J_{c}$ on $\Mm_{c}$ is obtained whenever~\ref{enum:Vc} holds and $J_{c}$ satisfies a mild coerciveness property on $\Gamma_c$, namely,
\begin{equation}\label{eqn:coercivita}
\exists \,R>0 \;\text{ s.t. }\; \inf_{q\in\Gamma_c}{J_{c}}(q)=\inf\{ J_{c}(q): q\in\Gamma_c,\,\|q\|_{L^{\infty}(\R,\R^N)}\leq R\}.\end{equation} 
From this minimizer we can reconstruct a solution $q_{c}\in C^{2}(\R,\R^N)$ to the problem~\eqref{eqn:gradSystem},\eqref{eqn:condbrake} satisfying the energy constraint $E_{q_{c}}(t):=\frac 12|\dot q(t)|^{2}-V(q(t))=-c$ for all $t\in\R$, see Theorem~\ref{P:main1}.\medskip

In order to have a better understanding of the scope of Theorem~\ref{P:main1}, which is presented in a very general form, it might be useful to illustrate some specific situations in which we can verify condition~\ref{enum:Vc} and \eqref{eqn:coercivita}. 
This is done in \S3 where more explicit assumptions on the potential $V$ are considered, including classical cases as double well, Duffing like  and pendulum like potential systems. In all these situations the potential $V$ has isolated minima at the level $c=0$. The 
 application of Theorem~\ref{P:main1} to these cases allows us to obtain existence and multiplicity results of connecting orbits $q_{c}$ at energy level $-c$ whenever $c$ is sufficiently small (see propositions \ref{p:twowell}, \ref{p:duffing} and \ref{p:pendumum}). When $c=0$ the corresponding connecting orbits are homoclinic or heteroclinic solutions connecting the different minima of the potential, while we get brake orbit solutions when $c$ is a regular value for $V$.
We then study  convergence properties  of the family of solutions $q_{c}$ to homoclinic type solutions or heteroclinic type solutions as the energy level $c$ goes to zero (see propositions \ref{p:convergence}, \ref{p:convergenceduffing}  and \ref{p:convergencepend}). \smallskip

 Our results extend recent studies made in \cite{zuniga2018thesis} in the ODE framework, where for a certain class of two-well potentials, periodic orbits of~\eqref{eqn:gradSystem} are shown to converge, in a suitable sense, to a heteroclinic solution joining the wells of such potential.

The issue of existence of heteroclinic solutions connecting the equilibria of multi-well potentials has been quite explored in the literature; the interested reader is referred to~\cite{alikakos2008connection},~\cite{antonopoulos2016minimizers},~\cite{fusco2018existence},~\cite{katzourakis2016ontheloss},~\cite{rabinowitz1993homoclinic}, and~\cite{santambrogio2016metric,sternberg2016heteroclinic} for different approaches on the subject.
\bigskip

{\bf Acknowledgement.} 
This work has been partially supported by a public grant overseen by the French National Research Agency (ANR) as part of the ``Investissements d'Avenir" program (reference: ANR-10-LABX-0098, LabEx SMP), and partially supported by the Projects EFI ANR-17-CE40-0030 (A.Z.) of the French National Research Agency. A.Z. also wishes to thank Peter Sternberg for fruitful discussions on the subject of this paper.


\section{The general existence result}\label{sec:variational}

In this section we state and prove our general result concerning the existence of solutions to the conservative system \eqref{eqn:gradSystem} connecting the sublevels $\mathcal{V}^{c}_{\pm}$ and that satisfy a pointwise energy constraint, provided~\eqref{eqn:coercivita} and \ref{enum:Vc}  hold. The proof of Theorem \ref{P:main1} adapts, to the ODE case, arguments that were already developed in \cite{alessio2013stationary}, \cite{[AlMbrake]} and \cite{alessio2016brake} for (systems of) PDE.

\begin{thm}\label{P:main1}
Assume $V\in C^{1}(\R^{N})$, and that there exists $c\in\R$ such that~\ref{enum:Vc} and the coercivity condition~\eqref{eqn:coercivita} of the energy functional $J_c$ over $\Mm_c$ hold true. Then there exists a solution $q_{c}\in C^{2}(\R,\R^N)$ to~\eqref{eqn:gradSystem}-\eqref{eqn:condbrake} which in addition satisfies 
\[
E_{q_{c}}(t):=\tfrac12|\dot q_{c}(t)|^{2}-V(q_{c}(t))=-c,\;\,\text{ for all }\,t\in\R.
\]
Furthermore, any such solution is classified in one of the following types
{\begin{enumerate}[label={\rm(\alph*)},itemsep=.5em]
\item\label{enum:main:1} $q_{c}$ is of {\sl brake orbit} type: There exist $-\infty<{\sigma<\tau}<+\infty$ so that\\[-0.35cm]
	\begin{enumerate}[label={\rm (\alph{enumi}.\roman*)},itemsep=2pt]
	\item\label{enum:mainthm:a1}  {$V(q_c(\sigma))=V(q_c(\tau))=c$}, $V(q_{c}(t))>c$ for every $t\in (\sigma,\tau)$ and  $ \dot {q_c}(\sigma)= \dot {q_c}(\tau)=0$,\\[-0.35cm]
	\item $q_{c}(\sigma)\in\mathcal{V}^{c}_-$, $q_{c}(\tau)\in\mathcal{V}^{c}_+$, $\nabla V(q_{c}(\sigma)) \neq 0$ and $\nabla V(q_{c}(\tau)) \neq 0$,\\[-0.35cm]
	\item\label{enum:mainthm:a3} $q_{c}(\sigma+t)=q_{c}(\sigma-t)$ and $q_{c}(\tau+t)=q_{c}(\tau-t)$ for all $t\in\R$.
	\end{enumerate}
\item\label{enum:main:2} {\sl $q_{c}$ is of {\sl homoclinic} type:} There exist $\sigma\in\R$ and a component $\mathcal{V}^{c}_{\pm}$ of $\mathcal{V}^{c}$ so that\\[-0.35cm] 
	\begin{enumerate}[label={\rm (\alph{enumi}.\roman*)},itemsep=2pt]
	\item	 $V(q_{c}(\sigma))=c$,  $V(q_{c}(t))>c$ for every $t\in\R\setminus\{\sigma\}$, $ \dot {q_c}(\sigma)=0$ and  $\lim_{t\to\pm\infty}\dot q_{c}(t)= 0$,\\[-0.35cm]
	\item  $q_{c}(\sigma)\in\mathcal{V}^{c}_{\pm}$, $\nabla V(q_{c}(\sigma))\neq 0$ and there exists a closed connected set $\Omega\subset \mathcal{V}^{c}_{\mp}\cap\{ x\in\R^N: V(x)=c,\,\nabla V(x)=0\}$ so  that $\lim_{t\to\pm\infty}\dist(q_{c}(t),\Omega)=0$,\\[-0.35cm]
	\item $q_{c}(\sigma+t)=q_{c}(\sigma-t)$ for all $t\in\R$.\end{enumerate}
\item\label{enum:main:3} {\sl $q_{c}$ is of {\sl heteroclinic} type: There holds} \\[-0.35cm]
	\begin{enumerate}[label={\rm (\alph{enumi}.\roman*)},itemsep=2pt]
	\item $V(q_{c}(t))>c$ for all $t\in\R$ and $\lim_{t\to\pm\infty}\dot q_{c}(t)= 0$,\\[-0.35cm]
 	\item There exist closed connected sets $\mathcal{A} \subset \mathcal{V}^c_-\cap\{ x\in\R^N:V(x)=c,\,\nabla V(x)=0\}$ and $\Omega \subset \mathcal{V}^{c}_+\cap\{ x\in\R^N:V(x)=c,\,\nabla V(x)=0\}$ such that
 \begin{align*}
 \lim\limits_{t\to-\infty}\dist(q_{c}(t),\mathcal{A})=0\;\hbox{ and }\;\lim\limits_{t\to+\infty}\dist(q_{c}(t),\Omega)=0.
 \end{align*}
 	\end{enumerate}
\end{enumerate}}
\end{thm}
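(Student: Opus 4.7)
\smallskip

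\textbf{Proof plan.} The plan is to produce $q_c$ as a minimizer of $J_c$ on $\Gamma_c$ and then to read off the classification by inspecting its \emph{connecting time interval}. By~\eqref{eqn:coercivita} I may restrict attention to a minimizing sequence $(q_n)\subset\Gamma_c$ with $\|q_n\|_{L^\infty}\le R$. Since $V(q_n(t))-c\ge 0$ on $\R$, the bound $J_c(q_n)\le C$ yields $\|\dot q_n\|_{L^2(\R)}\le C'$, so up to a subsequence $q_n\rightharpoonup q$ weakly in $H^1_{loc}(\R,\R^N)$ and $q_n\to q$ uniformly on compact sets; in particular $\|q\|_{L^\infty}\le R$. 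By Fatou and weak lower semicontinuity of the $L^2$-norm, $J_c(q)\le\liminf J_c(q_n)$.

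The first nontrivial point is to show $q\in\Gamma_c$, that is, $q$ still accumulates $\mathcal{V}^c_-$ at $-\infty$ and $\mathcal{V}^c_+$ at $+\infty$. The energy constraint $V(q)\ge c$ pointwise passes to the limit thanks to the uniform convergence and continuity of $V$. For the asymptotic condition I exploit $\dist(\mathcal{V}^c_-,\mathcal{V}^c_+)>0$ from~\ref{enum:Vc}: on any interval $[s,t]$ on which $q_n$ travels between a neighborhood of $\mathcal{V}^c_-$ and a neighborhood of $\mathcal{V}^c_+$, the integral $\int_s^t\tfrac12|\dot q_n|^2+(V(q_n)-c)\,dt$ is uniformly bounded below, which forbids mass escape to infinity. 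Suitably translating along $\R$ if needed, the $\liminf$ conditions in~(ii) of~\eqref{defMc} are preserved in the limit. This step is the main obstacle.

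With $q$ a minimizer in hand, Lemma~\ref{lem:sol-W-alphabeta} produces a maximal (possibly unbounded) connecting time interval $I=(\alpha,\omega)$ on which $V(q(t))>c$, Lemma~\ref{lem:minimisol} gives $q\in C^2(I)$ and $\ddot q=\nabla V(q)$ on $I$, and Lemma~\ref{lem:energia} yields $E_q(t)=-c$ throughout $I$. I then split into three cases. If $\alpha,\omega\in\R$, letting $t\to\alpha^+$ forces $V(q(\alpha))=c$ and, by the energy identity, $\dot q(\alpha)=0$; analogously at $\omega$. Since $q$ is not constant on $I$, uniqueness for the ODE with initial data on the set $\{V=c,\nabla V=0\}$ would give a contradiction, whence $\nabla V(q(\alpha))\ne 0$ and $\nabla V(q(\omega))\ne 0$. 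I then extend $q$ to all of $\R$ by successive reflections $q(\alpha-t)=q(\alpha+t)$ and $q(\omega+t)=q(\omega-t)$; the resulting $2(\omega-\alpha)$-periodic function is $C^2$ (reflection across turning points with $\dot q=0$ preserves smoothness since $\ddot q=\nabla V(q)$), solves~\eqref{eqn:gradSystem} on $\R$ with $E_{q_c}\equiv -c$, and is a brake orbit of type~\ref{enum:main:1}.

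If exactly one endpoint is finite, say $\alpha=\sigma\in\R$ and $\omega=+\infty$, the same argument gives $V(q(\sigma))=c$, $\dot q(\sigma)=0$, $\nabla V(q(\sigma))\ne 0$, and reflection $q(\sigma-t)=q(\sigma+t)$ provides the full solution. Finiteness of $J_c(q)$ together with the non-negativity of the integrand forces $\int^{+\infty}\tfrac12|\dot q|^2+(V(q)-c)\,dt<\infty$; combined with $\ddot q=\nabla V(q)$ bounded on the bounded trajectory, this implies $\dot q(t)\to 0$ and $V(q(t))\to c$ as $t\to+\infty$. The $\omega$-limit set $\Omega$ is then a closed connected subset of $\{V=c,\,\nabla V=0\}$, and condition~(ii) in~\eqref{defMc} places it inside $\mathcal{V}^c_\mp$ (opposite to $q(\sigma)\in\mathcal{V}^c_\pm$ by the connecting property and $\dist(\mathcal{V}^c_-,\mathcal{V}^c_+)>0$); this is case~\ref{enum:main:2}. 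When $\alpha=-\infty$ and $\omega=+\infty$, $q$ itself is already a $C^2$ solution on $\R$ with $V(q)>c$ everywhere, and the same integrability argument applied at both infinities yields the two limit sets $\mathcal{A}\subset\mathcal{V}^c_-$ and $\Omega\subset\mathcal{V}^c_+$ of case~\ref{enum:main:3}.
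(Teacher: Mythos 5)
Your strategy is the same as the paper's: minimize $J_c$ over the admissible class, restrict a minimizer to the maximal interval $(\alpha,\omega)$ on which $V(q)>c$, establish the energy identity there, and build the entire solution by reflection and periodic continuation, reading off the three cases from the finiteness of $\alpha$ and $\omega$. The classification half of your plan is essentially the paper's argument and is sound (including the use of uniqueness for the Cauchy problem to get $\nabla V\neq 0$ at a finite contact time, and the smoothness of the reflected extension at a turning point); the only understated point there is that membership of the limit sets in $\{\nabla V=0\}$ requires the $C^1_{loc}$ compactness argument for the translates $q(\cdot+t_n)$, which you assert rather than prove.

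The genuine gap is precisely where you write ``this step is the main obstacle'': showing that the weak limit of the (translated) minimizing sequence still satisfies the asymptotic conditions {\rm(ii)} of \eqref{defMc}. The observation that a crossing between neighborhoods of $\mathcal{V}^{c}_-$ and $\mathcal{V}^{c}_+$ costs a definite amount of action, together with a translation placing one crossing near $t=0$ (the paper normalizes $\dist(q_n(0),\mathcal{V}^{c})=\rho_0$), does not control the tails of $q_n$ \emph{uniformly in $n$}. Since the limit $q_0$ has finite action and is bounded, one only gets $\liminf_{t\to+\infty}\dist(q_0(t),\mathcal{V}^{c})=0$; nothing in your outline excludes that $q_0$ accumulates on $\mathcal{V}^{c}_-$ at \emph{both} ends, in which case $q_0\notin\Gamma_c$. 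The missing ingredient is the concentration estimate (Lemma~\ref{lem:concentrazione}): for almost-minimizers, after the last time $\sigma$ at which the trajectory is $r$-close to $\mathcal{V}^{c}_-$, it remains within $\rho_0$ of $\mathcal{V}^{c}_-$ for all $t<\sigma$, and symmetrically for $\mathcal{V}^{c}_+$. Its proof is not soft: one truncates $q_n$ at $\sigma$ and glues in a short segment reaching a point of $\mathcal{V}^{c}_-$; the resulting competitor is admissible with gluing cost at most $\nu_r$, so minimality forces $J_{c,(-\infty,\sigma)}$ of the truncated curve to be at most $2\nu_r$, which is too small to pay for any excursion of size $\rho_0/2$ in the tail. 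This surgery/comparison argument uses the near-minimality of $q_n$ in an essential way and is the key idea absent from your plan; without it the passage to the limit in condition {\rm(ii)} does not go through, and the subsequent classification has nothing to apply to.
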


{\begin{Remark}\label{rem:regular}
Note that if $c$ is a regular value for $V$ then the corresponding solution $q_c$ given by Theorem~\ref{P:main1} is of brake type, while it may be of the heteroclinic or homoclinic type if $c$ is a critical value of $V$.
\end{Remark}}

\noindent  {To prove Theorem~\ref{P:main1}, }given $c\in\R$ and an interval $I\in\R$, we consider the {\sl action functional} 
\[
J_{c,I}(q):=\int_{I}\tfrac12|\dot q(t)|^{2}+(V(q(t))-c)\, dt,
\]
defined on the space
\[
\Xx_c:=\{q\in H^{1}_{loc}(\R,\R^N):\; \inf_{t\in\R}V(q(t))\geq c\}.
\]
We will write henceforth $J_{c}(q):=J_{c,\R}(q)$.

\begin{Remark}\label{rem:semF}
Note that  since the lower bound $V(q(t))\geq c$ for all $t\in\R$ holds for any $q\in\Xx_c$, then we readily see that $J_{c,I}$ is non-negative on $\Xx_c$ for any given real interval $I$. {Moreover,} $\Xx_c$ is sequentially closed, and for any interval $I\subset\R$,  $J_{c,I}$ is lower semicontinuous with respect to the weak topology of $H^{1}_{loc}(\R,\R^N)$.
\end{Remark}

\begin{Remark}\label{rem:disequazioneF} 
If $q\in\Xx_c$ and $(\sigma,\tau)\subset\R$, then
\begin{align*}
J_{c,(\sigma,\tau)}(q)&\geq\tfrac 1{2(\tau-\sigma)}| q(\tau)-q(\sigma)|^{2}+\int_{\sigma}^{\tau}(V(q(t))-c)\,dt\\
&\geq
\sqrt{\tfrac 2{\tau-\sigma}\int_{\sigma}^{\tau}(V(q(t))-c)\,dt}\ | q(\tau)-q(\sigma)|.
\end{align*}
In particular, if there exists some $\mu>0$ for which $V(q(t))-c\geq \mu\geq 0$ for  all $t\in (\sigma,\tau)$, then we have
\begin{equation}\label{eqn:diseF}
J_{c,(\sigma,\tau)}(q)\geq\sqrt{2\mu}\ | q(\tau)-q(\sigma)|.
\end{equation}
\end{Remark}

\begin{Remark}\label{rem:h}
In view of~\ref{enum:Vc}, the sets $\mathcal{V}^{c}_-$ and $\mathcal{V}^{c}_+$ are disjoint and closed, and so they are locally well separated.
Hence, if $R$ denotes the constant introduced in the coerciveness assumption~\eqref{eqn:coercivita}, we have
\begin{equation}\label{eqn:separazione}
4\rho_{0}:=\dist(\mathcal{V}^{c}_-\cap B_{R}(0), \mathcal{V}^{c}_+\cap B_{R}(0))>0.
\end{equation}

The continuity of $V$ ensures that for any $r>0$ and $C>0$, there exists $h_{r,C}>0$ in such a way that
\[
\inf \{V(x): |x|\leq C\,\hbox{ and }\,\dist(x,\mathcal{V}^{c})\geq r\, \}\geq c+h_{r,C}.
\]
In what follows, we will simply denote $h_{r}:=h_{r,R}$, where $R$ is the constant given in~\eqref{eqn:coercivita}.
\end{Remark}
The variational problem we are interested in studying {involves} the following admissible set
\[
\Mm_c:=\biggl\{u\in\Xx_c:\, \liminf_{t\to-\infty}\dist(q(t),\mathcal{V}^{c}_-)=\liminf_{t\to+\infty}\dist(q(t),\mathcal{V}^{c}_+)=0\biggr\},
\]
{and we will denote $m_c:=\inf_{q\in\Mm_c}J_c(q)$}.
\smallskip

The first observation in place is
\begin{Lemma}\label{lem:m>0}
There results {$m_c\in(0,+\infty)$.}
\end{Lemma}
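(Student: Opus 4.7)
I would establish the two bounds $m_c<+\infty$ and $m_c>0$ separately. For the finiteness, my plan is to exhibit an explicit admissible function of finite action. First I would pick boundary points $x_\pm\in\partial\mathcal{V}^c_\pm$, which are nonempty since $\mathcal{V}^c_\pm$ are nontrivial closed proper subsets of the connected space $\R^N$. Such points satisfy $V(x_\pm)=c$: by the positive separation in~\ref{enum:Vc}, every small enough neighborhood of $x_\pm$ meets $\R^N\setminus\mathcal{V}^c=\{V>c\}$, so continuity forces $V(x_\pm)\geq c$, while $x_\pm\in\mathcal{V}^c$ gives the reverse inequality. I would then join $x_-$ to $x_+$ by a smooth curve $\gamma\colon[0,T]\to\{V\geq c\}$ and define $q_*(t)=x_-$ for $t\leq 0$, $q_*(t)=\gamma(t)$ for $t\in[0,T]$, and $q_*(t)=x_+$ for $t\geq T$. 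Since $V(x_\pm)=c$ on the constant ends and $V(\gamma)\geq c$ along the curve, $q_*\in\Gamma_c$, and $J_c(q_*)$ reduces to an integral over $[0,T]$ of a bounded integrand, giving finiteness.

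For the positivity I would argue by contradiction: assume some $(q_n)\subset\Gamma_c$ has $J_c(q_n)\to 0$. The coercivity~\eqref{eqn:coercivita} lets me assume $\|q_n\|_{L^\infty(\R,\R^N)}\leq R$. Setting $\delta_0:=\dist(\mathcal{V}^c_-,\mathcal{V}^c_+)>0$ (from~\ref{enum:Vc}) and $\rho:=\delta_0/4$, the triangle inequality yields the key separation
\[
\dist(q,\mathcal{V}^c_-)+\dist(q,\mathcal{V}^c_+)\;\geq\;\delta_0\qquad\text{for every }q\in\R^N,
\]
so any $q$ with $\dist(q,\mathcal{V}^c_-)\in[\rho,2\rho]$ satisfies $\dist(q,\mathcal{V}^c)\geq\rho$. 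The liminf conditions defining $\Gamma_c$ furnish $s_n<t_n$ with $\dist(q_n(s_n),\mathcal{V}^c_-)<\rho$ and $\dist(q_n(t_n),\mathcal{V}^c_+)<\rho$, whence $\dist(q_n(t_n),\mathcal{V}^c_-)\geq 3\rho$. By continuity of $q_n$ and the intermediate value theorem I would then select $a_n<b_n$ in $(s_n,t_n)$ satisfying $\dist(q_n(a_n),\mathcal{V}^c_-)=\rho$, $\dist(q_n(b_n),\mathcal{V}^c_-)=2\rho$, and $\dist(q_n(t),\mathcal{V}^c_-)\in[\rho,2\rho]$ throughout $[a_n,b_n]$.

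On this interval one has $q_n(t)\in B_R$ and $\dist(q_n(t),\mathcal{V}^c)\geq\rho$, so Remark~\ref{rem:h} gives $V(q_n(t))-c\geq h_\rho>0$. Using that $\dist(\cdot,\mathcal{V}^c_-)$ is $1$-Lipschitz we also obtain $|q_n(b_n)-q_n(a_n)|\geq\rho$, and inequality~\eqref{eqn:diseF} of Remark~\ref{rem:disequazioneF} then yields
\[
J_c(q_n)\;\geq\;J_{c,(a_n,b_n)}(q_n)\;\geq\;\sqrt{2h_\rho}\,|q_n(b_n)-q_n(a_n)|\;\geq\;\rho\sqrt{2h_\rho}\;>\;0,
\]
contradicting $J_c(q_n)\to 0$ and giving $m_c\geq\rho\sqrt{2h_\rho}$. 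The main technical delicacy is the geometric selection of the ``neck'' interval $[a_n,b_n]$ where $q_n$ is quantitatively separated from both components of $\mathcal{V}^c$ while still lying inside $B_R$; coercivity is essential here, since otherwise a minimizing sequence could escape the region where Remark~\ref{rem:h} provides a uniform positive lower bound on $V-c$.
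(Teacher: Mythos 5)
Your positivity argument is essentially the paper's: coercivity reduces to $\|q\|_{L^\infty}\leq R$, a crossing interval where the trajectory is quantitatively separated from $\mathcal{V}^c$ is extracted, and Remark~\ref{rem:h} together with \eqref{eqn:diseF} gives the uniform lower bound $\sqrt{2h_\rho}\,\rho$. Your selection of the interval $[a_n,b_n]$ via the intermediate value theorem and the $1$-Lipschitz character of $\dist(\cdot,\mathcal{V}^c_-)$ is a slightly more explicit version of the paper's choice of $(\sigma,\tau)$ with $|q(\tau)-q(\sigma)|=2\rho_0$; the constants differ (you use $\dist(\mathcal{V}^c_-,\mathcal{V}^c_+)/4$ where the paper uses the distance of the truncated sets $\mathcal{V}^c_\pm\cap B_R(0)$), but both are positive and the argument goes through.

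The finiteness half has a genuine, though easily repaired, gap in its order of operations. You first fix arbitrary boundary points $x_\pm\in\partial\mathcal{V}^c_\pm$ and \emph{then} assert the existence of a curve from $x_-$ to $x_+$ lying in $\{V\geq c\}$. For a bad choice of $x_-$ no such curve need exist: take $N=2$ with $\mathcal{V}^c_-=\{1\leq|x|\leq 2\}$ (with $V<c$ on its interior) and $\mathcal{V}^c_+$ a far-away disk; if $x_-$ lies on the inner circle $|x|=1$, every path from $x_-$ to $\mathcal{V}^c_+$ must traverse the open annulus where $V<c$, so the constructed $q_*$ would violate constraint {\rm(i)} of $\Gamma_c$ and have $V(q_*)-c$ strictly negative somewhere (and the admissibility fails). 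The correct order, which is what the paper does, is to start from \emph{any} segment joining a point of $\mathcal{V}^c_-$ to a point of $\mathcal{V}^c_+$ and truncate it between its last exit time from $\mathcal{V}^c_-$ and its first entrance time into $\mathcal{V}^c_+$: the truncation \emph{produces} endpoints $\xi_\pm\in\mathcal{V}^c_\pm$ with $V(\xi_\pm)=c$ and an open arc on which $V>c$, rather than prescribing the endpoints in advance. With that modification your construction coincides with the paper's and the finiteness follows.
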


\begin{proof}[{\bf Proof of Lemma~\ref{lem:m>0}}]
It is plain to observe that $m_c<+\infty$. Indeed, by~\ref{enum:Vc} we can choose two points $\xi_{-}\in\mathcal{V}^{c}_{-}$ and $\xi_{+}\in\mathcal{V}^{c}_{+}$ such that
$V(t\xi_{+}+(1-t)\xi_{-})>c$ for any $t\in (0,1)$. Then defining
\[q(t)=\begin{cases}\xi_{-}& \text{if }\,t\leq 0,\\
t\xi_{+}+(1-t)\xi_{-}& \text{if }\,t\in (0,1),\\
\xi_{+}& \text{if }\,1\leq t.\end{cases}\]
one plainly recognizes that $q\in\G_c$ and $m_c\leq J_c(q)<+\infty$.\\
To show that $m_c>0$, let us observe that in light of~\eqref{eqn:coercivita} 
$$
m_c=\inf\{ J_c(q):\, q\in\Mm_c\;\hbox{ and }\;\|q\|_{L^{\infty}(\R,\R^N)}\leq R\}.
$$
Also, by Sobolev embedding theorems, any $q\in\Mm_c$ is continuous over $\R$ and it verifies 
\[
 \liminf_{t\to-\infty}\dist(q(t),\mathcal{V}^{c}_-)=\liminf_{t\to+\infty}\dist(q(t),\mathcal{V}^{c}_+)=0.
\] 
 Recalling that $\mathcal{V}^{c}=\mathcal{V}^{c}_-\cup\mathcal{V}^{c}_+$, we deduce from~\eqref{eqn:separazione} and the fact that $\|q\|_{L^{\infty}(\R,\R^N)}\leq R$, that there exists a nonempty open interval $(\sigma,\tau)\subset\R$ depending on $q$, in such a way that
 $$|q(\tau)-q(\sigma)|=2\rho_{0},\;\,\hbox{ and }\;\;\dist(q(t),\mathcal{V}^{c})\geq\rho_{0}\; \text{ for any }\,t\in(\sigma,\tau).$$
But then Remark~\ref{rem:h} yields a uniform lower bound $V(q(t))-c\geq h_{\rho_{0}}$, for any $t\in (\sigma,\tau)$. This fact, combined with Remark \ref{rem:disequazioneF} yields 
\[
J_{c,(\sigma,\tau)}(q) \geq \sqrt{2h_{\rho_0}}|q(\tau)-q(\sigma)|\geq \sqrt{2h_{\rho_{0}}}\, 2\rho_{0}.
\]
Therefore, $m_c=\inf_{\Mm_c}J_c(q)\geq \inf_{\Mm_c}J_{c,(\sigma,\tau)}(q)\geq \sqrt{2h_{\rho_{0}}}\, 2\rho_{0}>0$.
\end{proof}

We now argue that finite energy {elements} in the admissible class $\Mm_c$ asymptotically approach the sublevel sets $\mathcal{V}^c_-$ and $\mathcal{V}^{c}_+$ in the following sense
\medskip
\begin{Lemma}\label{lem:limiti}
Suppose $q\in\Mm_c$ satisfies $\|q\|_{L^{\infty}(\R,\R^N)}<+\infty$ and  $J_c(q)<+\infty$. Then 
$$\lim_{t\to-\infty}\dist(q(t),\mathcal{V}^{c}_-)=\lim_{t\to+\infty}\dist(q(t),\mathcal{V}^{c}_+)=0.$$
\end{Lemma}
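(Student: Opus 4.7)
My plan is to argue by contradiction, showing that any failure of convergence would force $q$ to perform infinitely many "excursions" away from $\mathcal{V}^c_+$ (resp.\ $\mathcal{V}^c_-$), each contributing a fixed positive amount to $J_c(q)$, which would violate the finiteness hypothesis. By symmetry it suffices to establish $\lim_{t\to+\infty}\dist(q(t),\mathcal{V}^c_+)=0$.

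Suppose for contradiction that this fails. Since $q\in\Gamma_c$ we already have $\liminf_{t\to+\infty}\dist(q(t),\mathcal{V}^c_+)=0$, so the failure means that the $\limsup$ is strictly positive; hence there exist $\eta>0$ and a sequence $t_n\to+\infty$ with $\dist(q(t_n),\mathcal{V}^c_+)\geq 2\eta$. Shrinking $\eta$ if necessary, I may assume $4\eta<\dist(\mathcal{V}^c_-,\mathcal{V}^c_+)$, which is positive by \ref{enum:Vc}. Using the $\liminf$ condition together with continuity of $q$, I can then extract a disjoint sequence of intervals $[\alpha_n,\beta_n]$ with $\alpha_n\to+\infty$ such that
\[
\dist(q(\alpha_n),\mathcal{V}^c_+)=\eta,\qquad \dist(q(\beta_n),\mathcal{V}^c_+)=2\eta,\qquad \dist(q(t),\mathcal{V}^c_+)\in[\eta,2\eta]\;\;\forall t\in[\alpha_n,\beta_n].
\]
The triangle inequality for the distance function yields $|q(\beta_n)-q(\alpha_n)|\geq\eta$. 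Moreover, for $t\in[\alpha_n,\beta_n]$ one has $\dist(q(t),\mathcal{V}^c_-)\geq\dist(\mathcal{V}^c_-,\mathcal{V}^c_+)-2\eta>2\eta\geq\eta$, so $\dist(q(t),\mathcal{V}^c)\geq\eta$.

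Since $\|q\|_{L^\infty(\R,\R^N)}\leq C$ for some finite $C$, Remark~\ref{rem:h} (applied with $r=\eta$ and the constant $C$ in place of $R$) provides a uniform lower bound $V(q(t))-c\geq h_{\eta,C}>0$ for all $t\in[\alpha_n,\beta_n]$. Hence the bound \eqref{eqn:diseF} from Remark~\ref{rem:disequazioneF} gives
\[
J_{c,(\alpha_n,\beta_n)}(q)\geq\sqrt{2h_{\eta,C}}\,|q(\beta_n)-q(\alpha_n)|\geq\eta\sqrt{2h_{\eta,C}}.
\]
Adding these estimates over the countably many disjoint excursion intervals and using non-negativity of the integrand on $\mathcal{X}_c$ (cf.\ Remark~\ref{rem:semF}) yields $J_c(q)=+\infty$, contradicting the finiteness assumption. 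The analogous construction with $\mathcal{V}^c_+$ replaced by $\mathcal{V}^c_-$ and $t\to+\infty$ replaced by $t\to-\infty$ gives the other limit.

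The main technical point is the extraction of the excursion intervals: I must combine the failure of the $\limsup$ with the given $\liminf=0$ to produce infinitely many disjoint crossings of the annular region $\{\eta\leq\dist(\cdot,\mathcal{V}^c_+)\leq 2\eta\}$, and then ensure that throughout each crossing the point stays uniformly away from $\mathcal{V}^c$ (so that Remark~\ref{rem:h} yields a uniform potential-energy lower bound independent of $n$). The choice $4\eta<\dist(\mathcal{V}^c_-,\mathcal{V}^c_+)$ is exactly what guarantees this uniformity.
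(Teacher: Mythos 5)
Your proposal is correct and follows essentially the same strategy as the paper's proof: contradiction via infinitely many disjoint crossings of an annular band around $\mathcal{V}^c_+$ (resp.\ $\mathcal{V}^c_-$), a uniform potential lower bound from Remark~\ref{rem:h}, the estimate \eqref{eqn:diseF} on each crossing, and summation to contradict $J_c(q)<+\infty$. The only cosmetic difference is that you calibrate the band width against $\dist(\mathcal{V}^c_-,\mathcal{V}^c_+)$ directly rather than against the constant $\rho_0$ of \eqref{eqn:separazione}, which is equally valid here.
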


\begin{proof}[{\bf Proof of Lemma~\ref{lem:limiti}}]
Let us argue the case of the limit as $t\to-\infty$, the other limit can be argued similarly. By definition of $\Mm_c$, $q$ satisfies $\liminf_{t\to-\infty}\dist(q(t),\mathcal{V}^{c}_-)=0$. 
Let us assume by contradiction that $\limsup_{t\to-\infty}\dist(q(t),\mathcal{V}^{c}_-)>0$, so there must exist  $\rho\in (0,\rho_{0})$ and two sequences $\sigma_{n}\to-\infty$, ${\tau_{n}\to-\infty}$ such that $\tau_{n+1}<\sigma_{n}<\tau_{n}$ for which there results $|q(\tau_{n})-q(\sigma_{n})|=\rho$ and $\rho\leq \dist(q(t),\mathcal{V}^{c}_-)\leq2\rho$ for any $t\in(\sigma_{n},\tau_{n})$.
In particular, since $\rho<\rho_{0}$ it follows that $\dist(q(t),\mathcal{V}^{c})>\rho$ for any $t\in(\sigma_{n},\tau_{n})$ and $n\in\N$. Since $M:=\|q\|_{L^{\infty}(\R,\R^N)}<+\infty$, Remark~\ref{rem:h} yields $V(q(t))>c+h_{\rho,M}$ for any $t\in(\sigma_{n},\tau_{n})$ and so, by Remark \ref{rem:disequazioneF}, we conclude
\[
J_{c,(\sigma_{n},\tau_{n})}(q)\geq \sqrt{ 2h_{\rho,M}}\, |q(\tau_{n})-q(\sigma_{n})|=\sqrt{ 2h_{\rho,M}}\,\rho,\,\;\;\text{ for all }\,n\in\N.
\]
But then $J_c(q)\geq\sum_{n=1}^{\infty}J_{c,(\sigma_{n},\tau_{n})}(q)=+\infty$, thus contradicting the assumption $J_c(q)<+\infty$.
\end{proof}
{Moreover, by \eqref{eqn:separazione} we obtain the following concentration result}
\medskip
\begin{Lemma}\label{lem:concentrazione} 
There exists $\bar r\in (0,\frac{\rho_{0}}2)$ so that for any $r\in (0,\bar r)$, there exist $L_{r}>0$, $\nu_{r}>0$, in such a way that for any $q\in\Mm_c$ satisfying: $\dist(q(0),\mathcal{V}^{c})\geq\rho_{0}$, $\| q\|_{L^{\infty}(\R,\R^N)}\leq R$ and $J_c(q)\leq m_c+\nu_{r}$, one has 
\begin{enumerate}[label={\rm(\roman*)},itemsep=2pt]
\item\label{enum:lem:dist1} There is $\tau\in (0,L_{r})$ so that $\,\dist(q(\tau),\mathcal{V}^{c}_+)\leq r$, and $\dist(q(t),\mathcal{V}^{c}_+)< \rho_{0},\,$~for all $t\geq\tau$.
\item\label{enum:lem:dist2} There is $\sigma\in (-L_{r},0)$ so that $\,\dist(q(\sigma),\mathcal{V}^c_-)\leq r$, and $\dist(q(t),\mathcal{V}^c_-)< \rho_{0},\,$~for all $t\leq\sigma$.
\end{enumerate}
\end{Lemma}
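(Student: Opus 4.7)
I will prove item~(i) in detail; item~(ii) will follow by applying (i) to the reflected trajectory $t\mapsto q(-t)$, which lies in $\Mm_c$ with the roles of $\mathcal{V}^c_\pm$ interchanged and identical action. I will fix $\bar r\in (0,\rho_0/2)$ so that for every $r\in (0,\bar r)$ the $r$-neighborhoods of $\mathcal{V}^c_-$ and $\mathcal{V}^c_+$ inside $B_R(0)$ are separated by a distance strictly larger than $r$ (immediate from Remark~\ref{rem:h}); the constants $\nu_r>0$ and $L_r>0$ will be specified in the course of the argument. The qualitative part of~(i) is extracted from Lemma~\ref{lem:limiti}: since $\dist(q(t),\mathcal{V}^c_+)\to 0$ as $t\to+\infty$, I set
\[
\tau^+:=\sup\{t\geq 0:\dist(q(t),\mathcal{V}^c_+)\geq\rho_0\},\qquad \tau:=\inf\{t\geq\tau^+:\dist(q(t),\mathcal{V}^c_+)\leq r\},
\]
both finite. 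By continuity $\dist(q(\tau^+),\mathcal{V}^c_+)=\rho_0$ and $\dist(q(\tau),\mathcal{V}^c_+)=r$; since $\tau>\tau^+$, one automatically has $\dist(q(t),\mathcal{V}^c_+)<\rho_0$ for every $t\geq\tau$, which is the qualitative content of~(i). It then remains to bound $\tau$ uniformly.

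The uniform bound splits in two pieces. On $(\tau^+,\tau)$ one has $\dist(q(t),\mathcal{V}^c_+)\in(r,\rho_0)$ and, by the separation, also $\dist(q(t),\mathcal{V}^c_-)>r$, hence $V(q(t))-c\geq h_r$ by Remark~\ref{rem:h}; integration yields $\tau-\tau^+\leq (m_c+\nu_r)/h_r$. The harder part is the bound on $\tau^+$; my plan is to establish the stronger assertion that, when $\nu_r$ is chosen small enough in terms of $h_r$, $\rho_0$, $r$ and an interpolation cost $\eta_r$ introduced below, one in fact has $\dist(q(t),\mathcal{V}^c)>r$ throughout $[0,\tau^+]$. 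Integrating this inequality then gives $\tau^+\leq (m_c+\nu_r)/h_r$, and consequently $\tau\leq L_r:=2(m_c+\nu_r)/h_r$. I plan to prove the assertion by contradiction: if $q$ enters $B_r(\mathcal{V}^c_\varepsilon)$ at some $s\in(0,\tau^+)$, with $\varepsilon\in\{-,+\}$, then since the separation forces $\dist(q(\tau^+),\mathcal{V}^c)\geq\rho_0$ the trajectory must exit $B_{\rho_0}(\mathcal{V}^c_\varepsilon)$ at some $s'\in(s,\tau^+)$; using the $1$-Lipschitz character of the distance function together with the maintained bound $\dist(q,\mathcal{V}^c)>r$ on the intermediate annulus, Remark~\ref{rem:disequazioneF} yields $J_{c,(s,s')}(q)\geq\sqrt{2h_r}(\rho_0-r)$.

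To convert this excursion cost into a contradiction with $J_c(q)\leq m_c+\nu_r$, I will build a competitor $\hat q\in\Mm_c$ that elides the excursion, by splicing $q|_{(-\infty,s)}$ with an appropriate translate of $q|_{(s',+\infty)}$ and bridging the splice point by an $H^1$-path inside $\{V\geq c\}$ whose action is at most $\eta_r$. This gives $J_c(\hat q)\leq J_c(q)-\sqrt{2h_r}(\rho_0-r)+\eta_r$, and since $J_c(\hat q)\geq m_c$, the choice $\nu_r<\sqrt{2h_r}(\rho_0-r)-\eta_r$ produces the contradiction. The main obstacle will be the construction of this bridging path: the endpoints $q(s)$ and $q(s')$ are at distance of order $\rho_0-r$ apart and must be connected through $\{V\geq c\}$, so one has to exploit the local geometry of $\partial\mathcal{V}^c$ near regular points of $V$ (for instance by routing the interpolation close to level sets of $V$ just above $c$) to keep $\eta_r$ strictly below the fixed excursion saving $\sqrt{2h_r}(\rho_0-r)$. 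This is the ODE analogue of the truncation technique developed for Allen--Cahn systems in~\cite{alessio2013stationary, alessio2016brake}.
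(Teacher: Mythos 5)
Your argument has a genuine gap precisely where you flag ``the main obstacle.'' The competitor $\hat q$ you sketch requires bridging $q(s)$ to $q(s')$, two points at Euclidean distance of order $\rho_0-r$ apart, by an $H^1$-path in $\{V\geq c\}$ of action $\eta_r < \sqrt{2h_r}(\rho_0-r)$. Under the paper's hypotheses ($V$ only $C^1$, $c$ possibly a critical value, no information on the topology or smoothness of $\partial\mathcal{V}^c$), there is no mechanism to produce such a bridge: the level set $\{V=c\}$ need not be a hypersurface, need not be connected in a useful way, and the idea of ``routing the interpolation close to level sets of $V$ just above $c$'' is not backed by any structure available here. Note also that the allowed budget $\sqrt{2h_r}(\rho_0-r)\to 0$ as $r\to 0$, so $\eta_r$ would have to vanish as well, which makes the required construction even less plausible for a path of length comparable to $\rho_0$. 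Without this bridge the bound on $\tau^+$, and hence your $L_r$, is not established.

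The paper sidesteps the long bridge entirely by cutting at a different time. It sets $\sigma:=\sup\{t:\dist(q(t),\mathcal{V}^c_-)\leq r\}$ and $\tau:=\inf\{t>\sigma:\dist(q(t),\mathcal{V}^c_+)\leq r\}$, and the key step is to show $\dist(q(t),\mathcal{V}^c_-)<\rho_0$ for all $t<\sigma$ (plus the analogue for $\tau$). To get this, one picks $\xi_\sigma\in\mathcal{V}^c_-$ with $|q(\sigma)-\xi_\sigma|\leq r$ and the segment between them lying in $\{V\geq c\}$, which is possible precisely because $q(\sigma)$ is \emph{within} $r$ of $\mathcal{V}^c_-$. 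Then one builds two competitors: $\overline q$ replaces $q|_{(-\infty,\sigma)}$ by the constant $\xi_\sigma$ plus a short linear ramp (cost $\leq\nu_r:=\tfrac12 r^2+\mu_r$), and $\underline q$ replaces $q|_{(\sigma,+\infty)}$ by the symmetric ramp to $\xi_\sigma$. Since $\overline q\in\Mm_c$ gives $J_{c,(\sigma,+\infty)}(q)\geq m_c-\nu_r$, one deduces $J_{c,(-\infty,\sigma)}(\underline q)\leq 2\nu_r$, and then a far excursion of $q|_{(-\infty,\sigma)}$ from $\mathcal{V}^c_-$ would force a crossing cost $\geq\sqrt{2h_{\rho_0/2}}\,\rho_0/2$, contradicting $2\nu_r$ small. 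The essential difference from your plan is that the only interpolation needed here joins two points at distance $\leq r$, so its cost $\nu_r$ tends to $0$ automatically, with no geometric assumption on $V$ beyond continuity. After this, $\dist(q(0),\mathcal{V}^c)\geq\rho_0$ forces $0\in(\sigma,\tau)$, and on $(\sigma,\tau)$ one has $\dist(q(t),\mathcal{V}^c)>r$ so $V(q(t))-c\geq h_r$; integrating $J_{c,(0,\tau)}(q)\leq m_c+\nu_r<m_c+1$ yields $\tau<L_r:=(m_c+1)/h_r$, and symmetrically $-\sigma<L_r$. I recommend restructuring your proof around this ``cut at the nearest-approach time and splice with a short segment'' device, which is also what makes Remark~\ref{rem:disequazioneF} and Remark~\ref{rem:h} sufficient for the quantitative bounds.
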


\begin{proof}[{\bf Proof of Lemma~\ref{lem:concentrazione}}]
Given any $r\in (0,\frac{\rho_{0}}2)$ we define the following quantities 
\begin{align*}
\mu_{r}&:=\max \{ V(x)-c\,:|x|\leq R\;\hbox{ and }\;\dist(x,\mathcal{V}^{c})\leq r\},\\
L_{r}&:=\tfrac{m_c+1}{h_{r}},\;\;\text{ and }\;\; \nu_{r}:=\tfrac 12 r^{2}+\mu_{r}.
\end{align*}
In view of the continuity of $V$ we have $\lim_{r\to 0^+}\nu_{r}=0$. Hence, we can choose $\bar r\in (0,\frac{\rho_{0}}2)$ so that
\begin{equation}\label{eqn:nuupbdd}
\nu_{r}<\min\left\{\sqrt{ h_{\frac{\rho_{0}}2}}\,\tfrac{\rho_0}4,\, 1\right\}\;\hbox{ for any }\,r\in (0,\bar r).
\end{equation}

\noindent
For $q\in\Mm_c$ satisfying the assumptions of this lemma, let us define 
$$\sigma:=\sup\{ t\in\R:\, \dist(q(t),\mathcal{V}^c_-)\leq r\}, \quad \tau:=\inf\{ t>\sigma:\, \dist(q(t),\mathcal{V}^{c}_+)\leq r\}.$$
We observe that $-\infty<\sigma<\tau<+\infty$, since $\lim_{t\to-\infty}\dist(q(t),\mathcal{V}^c_-)=\lim_{t\to+\infty}\dist(q(t),\mathcal{V}^{c}_+)=0$, in light of the fact that the hypotheses of Lemma~\ref{lem:limiti} are fulfilled for any $q$ as above. Furthermore, the definition of $\sigma$ and $\tau$ yield 
\begin{equation}\label{eq:largedist}
\dist(q(t),\mathcal{V}^{c})>r\;\;\text{ for any }\,t\in (\sigma,\tau).
\end{equation}
Now we claim that
\begin{equation}\label{eqn:smalldistsigma}
\dist(q(t),\mathcal{V}^c_-)<\rho_{0}
\;\;\text{ for any }\,t<\sigma.
\end{equation}

\noindent Indeed, we can fix $\xi_{\sigma}\in\mathcal{V}^c_-$ so that $|q(\sigma)-\xi_{\sigma}|\leq r$ and 
\begin{equation}\label{eq:greaterc}
V((1-s)q(\sigma)+s\xi_{\sigma})>c\;\;\hbox{ for any }\,s\in(0,1).
\end{equation} 
Let us define
\[
\underline{q}(t):=
\left\{\begin{array}{ll}
q(t)& \text{if }\,t\leq\sigma,\\
(\sigma+1-t)q(\sigma)+(t-\sigma)\xi_{\sigma}&\text{if }\,\sigma< t<\sigma+1,\\
\xi_{\sigma}& \text{if }\, \sigma+1\leq t.
\end{array}\right.
\]
and
\[
\overline q(t):=
\left\{\begin{array}{ll}
\xi_{\sigma}& \text{if }\, t\leq\sigma-1,\\
(t-\sigma+1)q(\sigma)+(\sigma-t)\xi_{\sigma}& \text{if }\,\sigma-1<t<\sigma,\\
q(t)& \text{if }\;  \sigma\leq t.
\end{array}\right.
\]
First we note that {by \eqref{eq:greaterc} and since $q\in\Mm_c$, we have} $\overline q\in\Mm_c$ and so $J_c(\overline q)\geq m_c$. The latter, combined with the following inequality
\[
J_{c,(-\infty,\sigma)}(\overline q)\leq\int_{\sigma-1}^{\sigma}\tfrac 12|q(\sigma)-\xi_{\sigma}|^{2}+\mu_{r}\, dt\leq\tfrac 12 r^{2}+\mu_{r}=\nu_{r},
\]
shows that $J_{c,(\sigma,+\infty)}(\overline q)\geq m_c-\nu_r$, from which it follows
\[
m_c+\nu_{r}\geq J_c(q)=J_{c,(-\infty,\sigma)}(\underline{q})+J_{c,(\sigma,+\infty)}(\overline q)\geq J_{c,(-\infty,\sigma)}(\underline{q})+m_c-\nu_{r},
\]
thus proving
\begin{equation}\label{eq:energybddqminus}
J_{c,(-\infty,\sigma)}(\underline{q})\leq 2\nu_{r}.
\end{equation} 
To finish the proof of claim~\eqref{eqn:smalldistsigma}, let us assume by contradiction that there is $t_*<\sigma$ such that $\dist(q(t_*),\mathcal{V}^c_-)\geq\rho_{0}$. Since  $\dist(q(\sigma),\mathcal{V}^c_-)\leq r<\tfrac{\rho_{0}}2$,
we deduce that there exists an interval $(\gamma,\delta)\subset(-\infty,\sigma)$ such that
$|\underline{q}(\delta)-\underline{q}(\gamma)|=\frac{\rho_{0}}2$ and $\tfrac{\rho_{0}}2\leq \dist(\underline{q}(t),\mathcal{V}^{c}_-)\leq \rho_{0}$ for all $t\in(\gamma,\delta)$. Then, estimate~\eqref{eq:energybddqminus} combined with Remark~\ref{rem:disequazioneF} and Remark~\ref{rem:h} (since $\| q\|_{L^{\infty}(\R,\R^N)}\leq R$), yields
\[
2\nu_{r}\geq J_{c,(-\infty,\sigma)}(\underline{q})\geq J_{c,(\gamma,\delta)}(\underline{q})\geq\sqrt{ 2h_{\frac{\rho_{0}}2}}\,\tfrac{\rho_{0}}2,\;\text{ for any }\;r\leq \bar r,
\]
which contradicts~\eqref{eqn:nuupbdd} in view of the definition of $\bar r$. An analogous argument proves that
\begin{equation}\label{eqn:smalldisttau}
\dist(q(t),\mathcal{V}^{c}_+)<\rho_{0}\;\;\text{ for any }\,t>\tau.
\end{equation}
In this way, we have argued that the conditions~\ref{enum:lem:dist1}-\ref{enum:lem:dist2} are satisfied for the choice of $\tau$ and $\sigma$ as above. We are left to prove the chain of inequalities$-L_r<\sigma<0<\tau<L_r$, for the choice of $L_r$ as in the beginning of the proof. To see this, let us first note that $0\in(\sigma,\tau)$. This follows from the way the time $t=0$ was chosen: $\dist(q(0),\mathcal{V}^{c})\geq\rho_{0}$ and from~\eqref{eqn:smalldistsigma}-\eqref{eqn:smalldisttau} combined. Also, from Remark~\ref{rem:h} and~\eqref{eq:largedist} we have $V(q(t))-c\geq h_{r}$ for $t\in (\sigma,\tau)$. This, and~\eqref{eqn:nuupbdd} yield the lower bound
\[
m_c+1> m_c+\nu_{r}\geq J_{c,(0,\tau)}(q)\geq \int^{\tau}_0(V(q(t))-c)dt\geq \tau h_{r}.
\]
In other words, we have proved that $0<\tau<\frac{m_c+1}{h_{r}}=:L_{r}$. Analogously, we derive that $m_c+1> m_c+\nu_{r}\geq J_{c,(\sigma,0)}(q)\geq -\sigma h_{r}$ from which $0<-\sigma<L_{r}$. The proof of Lemma~\ref{lem:concentrazione} is now complete.
\end{proof}
\smallskip

{We can now conclude that the minimal level $m_c$ is achieved in $\Mm_c$. Indeed, we have}

\begin{Lemma}\label{lem:existence} 
There exists $q_{0}\in\Mm_c$ such that $J_c(q_{0})=m_c$.
\end{Lemma}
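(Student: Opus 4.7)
The plan is to apply the direct method of the calculus of variations, using the coerciveness hypothesis~\eqref{eqn:coercivita} to compactify the problem, weak lower semicontinuity of $J_c$ (Remark~\ref{rem:semF}) to bound the energy of the limit from above by $m_c$, and crucially Lemma~\ref{lem:concentrazione} to localize the sequence so that the limit still lies in the admissible class $\Gamma_c$. First, I pick a minimizing sequence $(q_n)\subset\Gamma_c$ with $J_c(q_n)\to m_c$; by~\eqref{eqn:coercivita}, we may take $\|q_n\|_{L^\infty(\R,\R^N)}\le R$. Each $q_n$ fulfills the hypotheses of Lemma~\ref{lem:limiti}, so $\dist(q_n(t),\mathcal{V}^c_\pm)\to 0$ as $t\to\pm\infty$. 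Since $\mathcal{V}^c_-$ and $\mathcal{V}^c_+$ are separated by $4\rho_0$ inside $B_R(0)$ by~\eqref{eqn:separazione}, continuity of $q_n$ and the intermediate value property give a time $t_n$ with $\dist(q_n(t_n),\mathcal{V}^c)\ge\rho_0$; since $\Gamma_c$ and $J_c$ are invariant under time translation, we may replace $q_n(\cdot)$ by $q_n(\cdot+t_n)$, so $\dist(q_n(0),\mathcal{V}^c)\ge\rho_0$ for every $n$. The $L^\infty$ bound together with $\|\dot q_n\|_{L^2}^2\le 2J_c(q_n)$ shows that $(q_n)$ is bounded in $H^{1}_{loc}(\R,\R^N)$; passing to a subsequence, $q_n\rightharpoonup q_0$ weakly in $H^{1}_{loc}$ and uniformly on compacta, with $\|q_0\|_\infty\le R$. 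Remark~\ref{rem:semF} then gives $q_0\in\Xx_c$ and $J_c(q_0)\le\liminf_n J_c(q_n)=m_c$.

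It remains to verify that $q_0\in\Gamma_c$, i.e.\ $\liminf_{t\to\pm\infty}\dist(q_0(t),\mathcal{V}^c_\mp)=0$; the case $+\infty$ is illustrative. Fix $r\in(0,\bar r)$. For $n$ large enough that $J_c(q_n)\le m_c+\nu_r$, the normalization $\dist(q_n(0),\mathcal{V}^c)\ge\rho_0$ activates Lemma~\ref{lem:concentrazione}\ref{enum:lem:dist1}, producing $\tau_n=\tau_n(r)\in(0,L_r)$ with $\dist(q_n(\tau_n),\mathcal{V}^c_+)\le r$ and $\dist(q_n(t),\mathcal{V}^c_+)<\rho_0$ for every $t\ge\tau_n$. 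Extracting a further subsequence, $\tau_n\to\tau^*\in[0,L_r]$; uniform convergence of $q_n$ on $[0,L_r+1]$ and continuity of $\dist(\cdot,\mathcal{V}^c_+)$ yield $\dist(q_0(t),\mathcal{V}^c_+)\le\rho_0$ for every $t>\tau^*$.

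I then argue by contradiction: suppose $\liminf_{t\to+\infty}\dist(q_0(t),\mathcal{V}^c_+)=\alpha>0$. Without loss of generality $\alpha\le\rho_0$ (otherwise the previous bound is immediately contradicted), and there is $T\ge\tau^*$ with $\dist(q_0(t),\mathcal{V}^c_+)\ge\alpha$ for all $t\ge T$. Combining with $\dist(q_0(t),\mathcal{V}^c_+)\le\rho_0$ and the separation~\eqref{eqn:separazione}, we obtain $\dist(q_0(t),\mathcal{V}^c_-)\ge 4\rho_0-\rho_0=3\rho_0\ge\alpha$, hence $\dist(q_0(t),\mathcal{V}^c)\ge\alpha$ on $[T,+\infty)$. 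Since $\|q_0\|_\infty\le R$, Remark~\ref{rem:h} gives $V(q_0(t))-c\ge h_\alpha>0$ on $[T,+\infty)$, which forces $J_c(q_0)\ge\int_T^{+\infty}h_\alpha\,dt=+\infty$, contradicting $J_c(q_0)\le m_c<+\infty$. The case $t\to-\infty$ is handled symmetrically via Lemma~\ref{lem:concentrazione}\ref{enum:lem:dist2}, so $q_0\in\Gamma_c$ and $J_c(q_0)=m_c$. The main obstacle is precisely this last step: weak $H^{1}_{loc}$ convergence does not a priori control asymptotic behavior, so the concentration lemma is essential for transferring the "connecting" property from the minimizing sequence to the limit by bounding the transition window between $\mathcal{V}^c_-$ and $\mathcal{V}^c_+$.
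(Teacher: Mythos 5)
Your proof takes essentially the same route as the paper: minimizing sequence bounded in $L^\infty$ by~\eqref{eqn:coercivita}, translation to normalize $\dist(q_n(0),\mathcal{V}^c)=\rho_0$, weak $H^1_{loc}$ compactness plus the lower semicontinuity of Remark~\ref{rem:semF} to get $J_c(q_0)\le m_c$, and Lemma~\ref{lem:concentrazione} to transfer the asymptotic localization to $q_0$. The paper closes the argument directly --- from $\int_L^{+\infty}(V(q_0)-c)\,dt\le m_c<+\infty$ and the non-negativity of the integrand it reads off $\liminf_{t\to+\infty}(V(q_0(t))-c)=0$, which combined with $\sup_{t>L}\dist(q_0(t),\mathcal{V}^c_+)\le\rho_0$ and the $L^\infty$ bound forces $\liminf_{t\to+\infty}\dist(q_0(t),\mathcal{V}^c_+)=0$ --- whereas you argue the contrapositive. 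Both are fine and of the same difficulty. One small slip in your contradiction step: $\liminf_{t\to+\infty}\dist(q_0(t),\mathcal{V}^c_+)=\alpha$ does \emph{not} give a $T$ with $\dist(q_0(t),\mathcal{V}^c_+)\ge\alpha$ for all $t\ge T$; it only gives $\dist(q_0(t),\mathcal{V}^c_+)\ge\alpha/2$ eventually. Replacing $\alpha$ by $\alpha/2$ throughout that paragraph (and invoking Remark~\ref{rem:h} with $h_{\alpha/2,R}$) repairs it immediately; the rest of the argument is sound.
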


\begin{proof}[{\bf Proof of Lemma~\ref{lem:existence}}]
Let $(q_{n})\subset\Mm_c$ be a minimizing sequence, so $J_c(q_{n})\to m_c$.
The coerciveness assumption \eqref{eqn:coercivita} allows us to assume that
\begin{equation}\label{eqn:linftybdd}
\|q_{n}\|_{L^{\infty}(\R,\R^N)}\leq R\;\hbox{ for any }\,n\in\N,
\end{equation} 
and hence Lemma \ref{lem:limiti} yields
\[
\lim_{t\to-\infty}\dist(q_{n}(t),\mathcal{V}^c_-)=\lim_{t\to+\infty}\dist(q_{n}(t),\mathcal{V}^{c}_+)=0
\;\;\text{ for any }\,n\in\N.
\]
Thus, \eqref{eqn:separazione} combined with continuity arguments shows that there is $(t_{n})\subset\R$ so that $\dist(q_{n}(t_{n}),\mathcal{V}^{c})=\rho_{0}$. Since the variational problem is invariant under time translations, we can assume that
\begin{equation}\label{eqn:calibra}
\dist(q_{n}(0),\mathcal{V}^{c})=\rho_{0}\;\;\hbox{ for any }\,n\in\N.
\end{equation}
But then conditions~\eqref{eqn:calibra},\eqref{eqn:linftybdd} together with $J_c(q_{n})\to m_c$ allow us to use Lemma~\ref{lem:concentrazione} to deduce the existence of $L>0$, in a such way that
\begin{equation}\label{eqn:conc}
\sup_{t\in(-\infty,-L)}\dist(q_{n}(t),\mathcal{V}^c_-)\leq \rho_{0}\;\;\hbox{ and }\;\,\sup_{t\in(L,+\infty)}\dist(q_{n}(t),\mathcal{V}^{c}_+)\leq\rho_{0},
\end{equation}
for all but finitely many terms in the sequence $(q_n)$. Observe now that $\inf_{t\in\R}V(q_{n}(t))\geq c$ for all $n\in\N$, since $q_{n}\in\Xx_c$, whence
\begin{equation}\label{eqn:qdot}
\|\dot q_{n}\|_{L^{2}(\R,\R^N)}^{2}\leq 2m_c+o(1),\;\;\hbox{ as }\;n\to\infty.
\end{equation}
By \eqref{eqn:linftybdd} and \eqref{eqn:qdot} we obtain the existence of $q_{0}\in H^{1}_{loc}(\R,\R^N)$, such that along a subsequence (which we continue to denote $q_{n}$) $q_{n}\rightharpoonup q_{0}$ weakly in $H^{1}_{loc}(\R,\R^N)$. As $q_{0}\in\Xx_c$, in view of Remark~\ref{rem:semF}, we deduce $J_c(q_{0})\leq m_c=\lim_{n\to\infty}J_c(q_{n})$.
On the other hand, the pointwise convergence, \eqref{eqn:linftybdd} and \eqref{eqn:conc} yield that $V(q_{0}(t))\geq c$ for any $t\in\R$, that $\|q_{0}\|_{L^{\infty}(\R,\R^N)}\leq R$ and 
\begin{equation}\label{eqn:asymptDist}
\sup_{t\in(-\infty,-L)}\dist(q_{0}(t),\mathcal{V}^c_-)\leq \rho_{0},\quad\sup_{t\in(L,+\infty)}\dist(q_{0}(t),\mathcal{V}^{c}_+)\leq\rho_{0}.
\end{equation}
Since $\int_{L}^{+\infty}(V(q_{0}(t))-c)\, dt\leq J_{c,(L,+\infty)}(q_{0})\leq m_c$, we obtain that
\[
\liminf_{t\to+\infty}V(q_{0}(t))-c=0,\;\hbox{ which implies }\;\liminf_{t\to+\infty}\dist(q_{0}(t),\mathcal{V}^{c}_+)=0,
\] 
in view of \eqref{eqn:asymptDist}.
Analogously, we deduce that $\liminf_{t\to-\infty}\dist(q_{0}(t),\mathcal{V}^c_-)=0$.  Thus, we have argued that $q_{0}\in\Mm_c$, which in turn shows the reverse inequality $J_c(q_0)\geq m_c$. The proof of Lemma~\ref{lem:existence} is now complete.
\end{proof}

It will be convenient to introduce the following set of minimizers to the variational problem studied in Lemma~\ref{lem:existence},
\[
\MM_c:=\{q\in\Mm_c:\, \dist(q(0),\mathcal{V}^{c})=\rho_{0},\ J_c(q)=m_c,\ \|q\|_{L^{\infty}(\R,\R^N)}\leq R\}.
\]
The proof of the above lemma reveals that $\MM_c\neq\emptyset$.\medskip

For any $q\in\MM_c$, we introduce the contact times of the trajectory of $q$ with the sublevel sets $\mathcal{V}^{c}_+$ and $\mathcal{V}^c_-$ of the potential by letting
\begin{equation}\label{eqn:alphaq}
\alpha_{q}:=\left\{
\begin{array}{ll}
-\infty & \text{ if }\, q(\R)\cap \mathcal{V}^c_-=\emptyset,\\ 
\sup\{t\in\R: \, q(t)\in \mathcal{V}^c_-\}&\text{ if }\,q(\R)\cap \mathcal{V}^c_-\neq\emptyset.
\end{array}\right.
\end{equation}
and
\begin{equation}\label{eqn:omegaq}
\omega_{q}:=\left\{
\begin{array}{ll}
\inf\{t>\alpha_{q}:\, q(t)\in \mathcal{V}^{c}_+\}&\text{ if }\,q(\R)\cap \mathcal{V}^{c}_+\neq\emptyset,\\
+\infty& \text{ if }\, q(\R)\cap \mathcal{V}^{c}_+=\emptyset. 
\end{array}\right.
\end{equation}
Note that {for all $q\in\MM_c$}, by Lemma \ref{lem:concentrazione} and by the definition of $\rho_0$~\eqref{eqn:separazione}, it is simple to verify that  
$$-\infty\leq \alpha_{q}<\omega_{q}\leq +\infty.$$ 
Moreover, by definition of $\alpha_{q}$ and $\omega_{q}$, {since $\dist(q(0),\mathcal{V}^{c})=\rho_{0}$ for every $q\in\MM_c$},  we have $q(t)\in\R^N\setminus (\mathcal{V}^c_-\cup\mathcal{V}^{c}_+)$ for any $\alpha_q<t<\omega_q$,  {that is} 
\[
V(q(t))>c\;\;\text{ for any }\, t\in (\alpha_{q},\omega_{q}).
\]

{Therefore we obtain}

\smallskip
\begin{Lemma}\label{lem:minimisol} 
If $q\in\MM_c$ then $q\in {C}^{2}((\alpha_{q},\omega_{q}),\R^N)$. Furthermore, any such $q$ is a solution to the system
\[
\ddot q(t)=\nabla V(q(t)),\;\;\text{ for all }\,t\in (\alpha_{q},\omega_{q}).
\]
\end{Lemma}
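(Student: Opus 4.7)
The strategy is to exhibit $q$ as a \emph{free} minimizer of $J_c$ with respect to perturbations compactly supported in $(\alpha_q,\omega_q)$, obtain the Euler--Lagrange equation in weak form, and then bootstrap regularity. The point is that the energy constraint $V(q)\geq c$ defining $\Xx_c$ is strictly active only on the complement of $(\alpha_q,\omega_q)$, so on the open set $(\alpha_q,\omega_q)$ the constraint becomes invisible to small perturbations.

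Concretely, I would fix an arbitrary closed interval $[a,b]\subset(\alpha_q,\omega_q)$. By the definition of $\alpha_q$ and $\omega_q$ together with the observation made just above the lemma, $q(t)\notin\mathcal{V}^c_-\cup\mathcal{V}^c_+$ for every $t\in(\alpha_q,\omega_q)$, so $V(q(t))>c$ on $[a,b]$. Since $q\in H^1_{loc}(\R,\R^N)\hookrightarrow C(\R,\R^N)$ and $V\in C^1$, the compactness of $[a,b]$ gives
\[
\mu:=\min_{t\in[a,b]}(V(q(t))-c)>0.
\]
For any $\varphi\in C_c^\infty((a,b),\R^N)$ the curve $q_\varepsilon:=q+\varepsilon\varphi$ coincides with $q$ outside $[a,b]$, so it retains the asymptotic properties defining $\Mm_c$. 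Moreover, by uniform continuity of $V$ on the compact set $\{q(t)+s\varphi(t): t\in[a,b],\,|s|\leq 1\}$, for all sufficiently small $|\varepsilon|$ we have $V(q_\varepsilon(t))\geq V(q(t))-\mu/2\geq c+\mu/2$ on $[a,b]$, and hence $V(q_\varepsilon)\geq c$ on $\R$. Thus $q_\varepsilon\in\Mm_c$.

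Since $q\in\MM_c$ minimises $J_c$ on $\Mm_c$ by definition, we deduce $J_c(q_\varepsilon)\geq J_c(q)$ for all small $\varepsilon$. The map $\varepsilon\mapsto J_c(q_\varepsilon)-J_c(q)$ reduces, because the perturbation has compact support, to an integral over $(a,b)$ depending smoothly on $\varepsilon$ (this uses only $V\in C^1$ and $\varphi,\dot\varphi\in L^\infty$). Differentiating at $\varepsilon=0$ yields the weak Euler--Lagrange identity
\[
\int_a^b \dot q(t)\cdot\dot\varphi(t)+\nabla V(q(t))\cdot\varphi(t)\,dt=0,\qquad\forall\,\varphi\in C_c^\infty((a,b),\R^N).
\]
Since $\nabla V\circ q$ is continuous on $(\alpha_q,\omega_q)$, a standard du~Bois--Reymond argument shows that $\dot q$ admits a $C^1$ representative on $(a,b)$ satisfying $\ddot q(t)=\nabla V(q(t))$ pointwise. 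As $[a,b]\subset(\alpha_q,\omega_q)$ is arbitrary, we conclude $q\in C^2((\alpha_q,\omega_q),\R^N)$ and the equation holds on the whole open connecting interval.

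The only delicate point is checking admissibility of $q_\varepsilon$; every other step is routine calculus of variations. The strict gap $V(q)-c\geq\mu>0$ on compact subintervals of $(\alpha_q,\omega_q)$, guaranteed by the definitions of $\alpha_q$ and $\omega_q$, is precisely what turns the constrained minimisation into an unconstrained one locally and hence unlocks the Euler--Lagrange equation.
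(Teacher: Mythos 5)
Your proof is correct and follows essentially the same strategy as the paper's: exploit the strict gap $V(q)>c$ on compact subintervals of $(\alpha_q,\omega_q)$ to make the constraint locally inactive, verify admissibility of small compactly-supported perturbations, derive the weak Euler--Lagrange equation from minimality, and bootstrap regularity. The only cosmetic difference is that you work directly with two-sided perturbations $q+\varepsilon\varphi$ for small $|\varepsilon|$, whereas the paper uses $h>0$ and then replaces $\psi$ by $-\psi$ to upgrade the variational inequality to an equality.
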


\begin{proof}[{\bf Proof of Lemma~\ref{lem:minimisol}}]
Let $\psi\in C_{0}^{\infty}(\R)$ be so that $ \hbox{supp}\, \psi\subset [a,b]\subset (\alpha_{q},\omega_{q})$.
Since $V(q(t))>c$ for any $t\in(\alpha_{q},\omega_{q})$ and $t\mapsto V(q(t))$ is continuous on $\R$, we derive that there exists $\lambda_{0}>0$ such that $\min_{t\in [a,b]}V(q(t))=c+\lambda_{0}$. The continuity of $V$ ensures that there exists $h_{\psi}>0$ such that
\[
\min_{t\in [a,b]}V(q(t)+h\psi(t))>c\;\,\hbox{ for any }\;h\in (0,h_{\psi}).
\]
In other words, for any $\psi\in C_{0}^{\infty}(\R)$ with $\hbox{supp}\, \psi\subset [a,b]\subset  (\alpha_{q},\omega_{q})$ there exists $h_{\psi}>0$ in such a way that $q+h\,\psi\in\Mm_c$, provided $h\in (0,h_{\psi})$. Since $q$ is a minimizer of $J_c$ over $\Mm_c$, then
\[
J_c(q+h\psi)-J_c(q)\geq 0\;\,\hbox{ for any }\;h\in (0,h_{\psi}).
\]
Writing the inequality explicitly, and using the Dominated Convergence Theorem as $h\to 0^+$, we readily see 
\begin{align*}
&\phantom{=}\lim_{h\to0^{+}}\frac{1}{h}\left(\int_{\alpha_{q}}^{\omega_{q}}(\tfrac 12|\dot q+h\dot\psi|^{2}+
V(q+h\psi)-c)\,dt-J_{c,(\alpha_{q},\omega_{q})}(q)\right)\\
&=\lim_{h\to 0^{+}}\frac{1}{h}
    \int_{\text{supp }\psi}\left(\tfrac 12(|\dot q+h\dot \psi|^{2}-|\dot q|^{2})+(V(q+h\psi)-V(q))\right)dt\\
&=\int_{\R}\dot q\cdot{\dot\psi}+\nabla V(q)\cdot\psi\, dt\ge0.
\end{align*}
The same argument with $-\psi$ as test function shows $\int_{\R}\dot q\cdot\dot\psi+\nabla V(q)\cdot\psi\, dt=0$, so $q$ is a weak solution of $\ddot q=\nabla V(q)$ on $(\alpha_{q},\omega_{q})$. Standard regularity arguments show that $q\in C^{2}((\alpha_{q},\omega_{q}),\R^N)$, whence $q$ is a strong solution to the above system. 
\end{proof}
{Moreover, we have}
\begin{Lemma}\label{lem:sol-W-alphabeta}
If $q\in\MM_c$, then 
\begin{enumerate}[label={\rm(\roman*)},itemsep=2pt]
	\item\label{enum:lem:disqbdry1} $\lim\limits_{t\to\alpha_{q}^{+}}\dist(q(t),\mathcal{V}^c_-)=0$, and if $\alpha_{q}>-\infty$ then $V(q(\alpha_{q}))=c$ with $q(\alpha_{q})\in\mathcal{V}^c_-$,	
	 \item\label{enum:lem:disqbdry2} $\lim\limits_{t\to\omega_{q}^{-}}\dist(q(t),\mathcal{V}^{c}_+)=0$, and if $\omega_{q}<+\infty$ then $V(q(\omega_{q}))= c$ with $q(\omega_{q})\in\mathcal{V}^{c}_+$.
\end{enumerate}
\end{Lemma}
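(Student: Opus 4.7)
The plan is to treat the two halves (i) and (ii) by an entirely symmetric argument, so I will describe (i) only, splitting it into two cases according to whether $\alpha_q$ is finite or not.

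\textbf{Case $\alpha_q > -\infty$.} By \eqref{eqn:alphaq} the set $S_-:=\{t\in\R:q(t)\in\mathcal{V}^c_-\}$ is nonempty and $\alpha_q=\sup S_-<\infty$. Picking any sequence $t_n\in S_-$ with $t_n\to\alpha_q$, the continuity of $q$ gives $q(t_n)\to q(\alpha_q)$, and since $\mathcal{V}^c_-$ is closed (by \ref{enum:Vc}) we conclude $q(\alpha_q)\in\mathcal{V}^c_-$, whence $V(q(\alpha_q))\leq c$. On the other hand $q\in\MM_c\subset\Xx_c$ forces $V(q(\alpha_q))\geq c$, giving the equality $V(q(\alpha_q))=c$. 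Finally, the one-sided limit follows immediately from the estimate $\dist(q(t),\mathcal{V}^c_-)\leq |q(t)-q(\alpha_q)|$ together with continuity of $q$ at $\alpha_q$.

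\textbf{Case $\alpha_q=-\infty$.} By \eqref{eqn:alphaq} we have $q(\R)\cap\mathcal{V}^c_-=\emptyset$, so the only statement to check is $\lim_{t\to-\infty}\dist(q(t),\mathcal{V}^c_-)=0$. This is precisely the content of Lemma~\ref{lem:limiti}: indeed $q\in\MM_c\subset\Mm_c$, and by definition of $\MM_c$ one has $\|q\|_{L^\infty(\R,\R^N)}\leq R<+\infty$ and $J_c(q)=m_c<+\infty$ (the finiteness of $m_c$ being Lemma~\ref{lem:m>0}), so the hypotheses of Lemma~\ref{lem:limiti} are fulfilled and the claim is immediate.

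The proof of (ii) is obtained by replacing $\alpha_q,\mathcal{V}^c_-$ with $\omega_q,\mathcal{V}^c_+$ and reversing time-directions; all other arguments are identical. The only delicate point in the whole proof is that when $S_-$ is nonempty and bounded above one must verify that $\alpha_q\in S_-$, i.e.\ that the supremum is attained; this is settled using the closedness of $\mathcal{V}^c_-$ granted by \ref{enum:Vc}. Aside from this, the lemma is a routine consequence of the definitions together with the already-established Lemmas~\ref{lem:m>0} and~\ref{lem:limiti}.
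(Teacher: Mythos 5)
Your proof is correct and follows essentially the same route as the paper's: in the finite case you use continuity of $q$ and closedness of $\mathcal{V}^c_-$ to get $q(\alpha_q)\in\mathcal{V}^c_-$ (hence $V(q(\alpha_q))=c$ by combining $\mathcal{V}^c_-\subset\{V\le c\}$ with the constraint $q\in\Xx_c$), and in the infinite case you invoke Lemma~\ref{lem:limiti}. Your explicit estimate $\dist(q(t),\mathcal{V}^c_-)\le |q(t)-q(\alpha_q)|$ is actually a slightly cleaner way to obtain the one-sided limit than the paper's passing remark about $V(q(t))\to c$.
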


\begin{proof}[{\bf Proof of Lemma~\ref{lem:sol-W-alphabeta}}]
  If $\alpha_{q}=-\infty$, by Lemma \ref{lem:limiti} we obtain $\lim_{t\to -\infty}\dist(q(t),\mathcal{V}^c_-)=0$.  
  If  $\alpha_{q}>-\infty$, then the continuity of $q$ and the definition of $\alpha_{q}$ imply that $q(\alpha_{q})\in\mathcal{V}^c_-$, and $V(q(\alpha_{q}))= c$. In particular, $\lim_{t\to \alpha_{q}^{+}}V(q(t))=c$, and~\ref{enum:lem:disqbdry1} follows. One argues~\ref{enum:lem:disqbdry2} in a similar fashion.
 \end{proof}

{By the previuos result we obtain }

\begin{Lemma}\label{lem:sol-minimoalphaomega}
 Any $q\in\MM_c$ satisfies $J_c(q)=J_{c,(\alpha_{q},\omega_{q})}(q)=m_c$. Moreover, $q(t)\equiv q(\alpha_{q})$ on $(-\infty,\alpha_{q})$ if $\alpha_{q}\in\R$, and $q(t)\equiv q(\omega_{q})$ on $(\omega_{q},+\infty)$ if $\omega_{q}\in\R$.
 \end{Lemma}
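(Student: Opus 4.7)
The plan is to show $J_{c,(-\infty,\alpha_q)}(q)=0$ and $J_{c,(\omega_q,+\infty)}(q)=0$ by a cut--and--extend comparison argument; the constancy on these tails, and the equality $J_c(q)=J_{c,(\alpha_q,\omega_q)}(q)=m_c$, then follow at once. The starting observation is that since $q\in\Xx_c$, the Lagrangian $\tfrac12|\dot q|^2+(V(q)-c)$ is pointwise non-negative, so
\[
J_c(q)=J_{c,(-\infty,\alpha_q)}(q)+J_{c,(\alpha_q,\omega_q)}(q)+J_{c,(\omega_q,+\infty)}(q)
\]
splits into three non-negative pieces (the outer ones being vacuously zero when the corresponding endpoint is infinite). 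Once the outer pieces are shown to vanish, vanishing of $\int |\dot q|^2$ on $(-\infty,\alpha_q)$ forces $\dot q=0$ a.e.\ there, and continuity of $q$ yields $q\equiv q(\alpha_q)$ on $(-\infty,\alpha_q]$; the same argument handles the right tail.

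To show $J_{c,(-\infty,\alpha_q)}(q)=0$ when $\alpha_q\in\R$, I would use Lemma~\ref{lem:sol-W-alphabeta} to get $q(\alpha_q)\in\mathcal{V}^c_-$ with $V(q(\alpha_q))=c$, and then build the competitor
\[
\tilde q(t):=\begin{cases} q(\alpha_q), & t\le\alpha_q,\\ q(t), & t>\alpha_q.\end{cases}
\]
The key verification is that $\tilde q\in\Mm_c$: it lies in $H^1_{loc}(\R,\R^N)$, satisfies $V(\tilde q(t))\ge c$ everywhere (equality on $(-\infty,\alpha_q]$), has $\dist(\tilde q(t),\mathcal{V}^c_-)\equiv 0$ on $(-\infty,\alpha_q]$ so the liminf at $-\infty$ is $0$, and inherits from $q$ the liminf condition at $+\infty$. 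Since on $(-\infty,\alpha_q]$ the Lagrangian of $\tilde q$ vanishes identically,
\[
m_c\le J_c(\tilde q)=J_{c,(\alpha_q,+\infty)}(q)=J_c(q)-J_{c,(-\infty,\alpha_q)}(q)=m_c-J_{c,(-\infty,\alpha_q)}(q),
\]
which combined with non-negativity forces $J_{c,(-\infty,\alpha_q)}(q)=0$. An entirely symmetric construction, replacing $q$ by the constant $q(\omega_q)\in\mathcal{V}^c_+$ on $[\omega_q,+\infty)$, handles the right tail when $\omega_q\in\R$.

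There is no real obstacle here: the whole proof is a one-line comparison plus an appeal to Lemma~\ref{lem:sol-W-alphabeta} to guarantee that the endpoint values actually lie on $\mathcal{V}^c_\pm$ (so that constant extensions are admissible) and have $V=c$ (so that the extended constant piece contributes zero to $J_c$). The only care needed is the boundary case: when $\alpha_q=-\infty$ (resp.\ $\omega_q=+\infty$) there is nothing to extend and the corresponding statement in the lemma is vacuous, while the identity $J_c(q)=J_{c,(\alpha_q,\omega_q)}(q)$ still holds since the outer piece is, by convention, zero.
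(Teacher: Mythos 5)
Your proposal is correct and follows essentially the same route as the paper: both replace $q$ by its constant extension $q(\alpha_q)$ (resp. $q(\omega_q)$) on the tails, use Lemma~\ref{lem:sol-W-alphabeta} to check admissibility of the competitor in $\Mm_c$, and deduce from $m_c\le J_c(\tilde q)$ together with non-negativity of the integrand that the tail contributions vanish, whence $\dot q=0$ a.e.\ there. The only (immaterial) difference is that you treat the two tails with separate competitors, while the paper modifies both in a single $\tilde q$.
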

 
 \begin{proof}[{\bf Proof of Lemma~\ref{lem:sol-minimoalphaomega}}] 
 Let us define $\tilde q$ to be equal to $q$ on the interval $(\alpha_{q},\omega_{q})$, and such that
 $\tilde q(t)=q(\alpha_{q})$ on $(-\infty,\alpha_{q})$ if $\alpha_{q}\in\R$,  while $\tilde q(t)=q(\omega_{q})$ on $( \omega_{q},+\infty)$ if $\omega_{q}\in\R$ (so if neither of $\alpha_q$ or $\omega_q$ is finite, then $\tilde{q}=q$). In view of Lemma \ref{lem:sol-W-alphabeta} we see that $\tilde q\in\Mm_c$, whence $J_c(\tilde q)\ge m_c$. The latter implies that $\tilde{q}$ is also a minimizer of $J_c$, as $J_c(\tilde q)=J_{c,(\alpha_{q},\omega_{q})}(\tilde q)=J_{c,(\alpha_{q},\omega_{q})}(q)\leq m_c$, from which we deduce $m_c=J_c(\tilde q)=J_{c,(\alpha_{q},\omega_{q})}(q)$.  In particular,  since $\inf_{t\in\R}V(q(t))\geq c$ we obtain $J_{c,(-\infty, \alpha_{q})}(q)=J_{c,(\omega_{q},+\infty)}(q)=0$, which shows $\|\dot q\|_{L^{2}((-\infty, \alpha_{q}),\R^N)}=\|\dot q\|_{L^{2}((\omega_{q},+\infty),\R^N)}=0$. Therefore, $q$ must be constant on $(-\infty, \alpha_{q})$, and on $(\omega_{q},+\infty)$, so the lemma is established.
 \end{proof}
 
{By the previous result, we obtain}

\smallskip
\begin{Lemma}\label{lem:sol-pohoz}
Consider $q\in\MM_c$, and let $(\tau,\sigma)\subseteq (\alpha_{q},\omega_{q})$ be arbitrary. Then,
\[
\tfrac12\int_{\tau}^{\sigma}|\dot q(t)|^{2}\, dt=\int_{\tau}^{\sigma} (V(q(t))-c)\,dt.
\]
\end{Lemma}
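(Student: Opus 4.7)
The plan is to derive the pointwise identity $\tfrac12|\dot q(t)|^{2}=V(q(t))-c$ on the whole connecting interval $(\alpha_q,\omega_q)$, from which the stated equality over any sub-interval $(\tau,\sigma)\subseteq(\alpha_q,\omega_q)$ follows by integration. This splits naturally into two steps: (a) show that the mechanical energy of $q$ is constant on $(\alpha_q,\omega_q)$, and (b) identify that constant as $-c$ via a scaling comparison exploiting minimality.

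For step (a), I would invoke Lemma~\ref{lem:minimisol}, which provides $q\in C^{2}((\alpha_q,\omega_q),\R^N)$ with $\ddot q=\nabla V(q)$. Taking the inner product of this ODE with $\dot q$ gives $\tfrac{d}{dt}\bigl[\tfrac12|\dot q|^{2}-V(q)\bigr]\equiv 0$ on $(\alpha_q,\omega_q)$, so there exists $E_*\in\R$ with $\tfrac12|\dot q(t)|^{2}-V(q(t))=E_*$ for every $t\in(\alpha_q,\omega_q)$.

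For step (b), I would introduce the one-parameter family of dilations $q_\lambda(t):=q(\lambda t)$, $\lambda>0$. A direct check shows $q_\lambda\in\Mm_c$: both the constraint $V(q_\lambda)\ge c$ and the liminf conditions in the definition of $\Mm_c$ are invariant under this rescaling. Changing variable $s=\lambda t$ yields
\[
J_c(q_\lambda)=\lambda K+\lambda^{-1}U,\qquad \text{where }\ K:=\tfrac12\!\int_\R|\dot q|^{2}\,dt,\ \ U:=\!\int_\R(V(q)-c)\,dt,
\]
and both $K,U$ are finite since $J_c(q)=m_c<+\infty$. Because $q=q_1$ minimizes $J_c$ on $\Mm_c$, the smooth scalar map $\lambda\mapsto\lambda K+\lambda^{-1}U$ is minimal at $\lambda=1$, which forces $K=U$.

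Finally, Lemma~\ref{lem:sol-minimoalphaomega} tells us $q$ is constant (so $\dot q\equiv 0$ and $V(q)\equiv c$) outside $(\alpha_q,\omega_q)$, whence $K$ and $U$ both reduce to integrals over $(\alpha_q,\omega_q)$. Combining $K=U$ with the constancy of the energy from step~(a) gives
\[
0 \;=\; K-U \;=\; \int_{\alpha_q}^{\omega_q}\!\bigl[\tfrac12|\dot q|^{2}-(V(q)-c)\bigr]\,dt \;=\; (E_*+c)(\omega_q-\alpha_q),
\]
and since $\omega_q-\alpha_q>0$ (even when infinite, the product with a nonzero constant cannot equal the finite value $0$), we conclude $E_*=-c$. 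Thus $\tfrac12|\dot q(t)|^{2}=V(q(t))-c$ pointwise on $(\alpha_q,\omega_q)$, and integrating over $(\tau,\sigma)$ yields the lemma. The mildly delicate points to verify carefully are the admissibility $q_\lambda\in\Mm_c$ (needed to legitimately apply the minimality inequality) and the treatment of the possibly infinite connecting interval in the final deduction.
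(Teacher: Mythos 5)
Your proof is correct, but it takes a genuinely different route from the paper's. The paper never passes through the pointwise energy identity: for each $\tau\in(\alpha_q,\omega_q)$ it builds a competitor that keeps $q$ on $(-\infty,\tau]$ and dilates only the right tail $q|_{(\tau,\omega_q)}$, and optimizing the resulting expression $(\tfrac1s-1)T+(s-1)U\geq 0$ in $s$ yields $T=U$ on every tail $(\tau,\omega_q)$ (and symmetrically on every $(\alpha_q,\sigma)$); the statement for a general subinterval then follows by additivity of the integral. That argument is purely variational — it uses only Lemma~\ref{lem:sol-W-alphabeta} and Lemma~\ref{lem:sol-minimoalphaomega}, not the Euler--Lagrange equation. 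You instead perform a single global dilation $q_\lambda(t)=q(\lambda t)$ to get $K=U$ over all of $\R$, and then need Lemma~\ref{lem:minimisol} (the ODE) to propagate this one scalar identity to arbitrary subintervals via conservation of the energy $E_*$; the step $0=(E_*+c)(\omega_q-\alpha_q)\Rightarrow E_*=-c$, including your remark on the infinite-length case, is sound, and your verification that $q_\lambda\in\Mm_c$ and that $K,U$ reduce to integrals over $(\alpha_q,\omega_q)$ is exactly the right care to take. There is no circularity: you use only lemmas that precede this one. What your route buys is that it delivers $E_q\equiv-c$ on $(\alpha_q,\omega_q)$ immediately, i.e.\ the content of the subsequent Lemma~\ref{lem:energia}, whose proof in the paper (in the case of finite endpoints) relies on the present lemma plus a limiting argument at $\omega_q$; what the paper's tail-by-tail scaling buys is independence from the regularity theory and from the ODE itself.
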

\begin{proof}[{\bf Proof of Lemma~\ref{lem:sol-pohoz}}] 
Given any $\tau\in (\alpha_{q},\omega_{q})$, we will prove that
\begin{equation}\label{eqn:tau}
\tfrac12\int_{\tau}^{\omega_{q}}|\dot q(t)|^{2}\, dt=\int_{\tau}^{\omega_{q}} (V(q(t))-c)\, dt.
\end{equation}
For any $s>0$ and $\tau$ as above, we define
\[
q_{s}(t):=\left\{
\begin{array}{ll}
q(t+\tau)& \text{if }\;t\leq 0,\\[.25em]
q(\frac ts+\tau)&\text{if }\; t>0.
\end{array}\right.
\]
It is easy to check that $q_{s}\in\Mm_c$, by Lemma \ref{lem:sol-W-alphabeta} using that $q\in\MM_c$. Furthermore, by Lemma \ref{lem:sol-minimoalphaomega}, we deduce
\begin{align*}
J_c(q_{s})=J_{c,(\alpha_{q}-\tau, s(\omega_{q}-\tau)))}(q_{s})&\geq m_c=J_{c,(\alpha_{q}, \omega_{q})}(q)
=J_{c,(\alpha_{q}-\tau, \omega_{q}-\tau)}(q(\cdot+\tau)).
\end{align*}
In particular, {for all $s>0$} we obtain the following
\begin{align*}
0&\leq J_{c,(\alpha_{q}-\tau, s(\omega_{q}-\tau))}(q_{s})-J_{c,(\alpha_{q}-\tau, \omega_{q}-\tau)}(q(\cdot+\tau))\\[0.1cm]
&= \int_{0}^{s(\omega_{q}-\tau)}\bigl(\tfrac 12|\dot q_{s}(t)|^{2}+V(q_{s}(t))-c\bigr)\,dt-\int_{0}^{\omega_{q}-\tau}\bigl(\tfrac 12|\dot q(t+\tau)|^{2}+V(q(t+\tau))-c\bigr)\, dt\\
&= \int_{0}^{s(\omega_{q}-\tau)}\left(\tfrac{1}{2s^{2}}\bigl|\dot q\bigl(\tfrac{t}{s}+\tau\bigr)\bigr|^{2}+V\bigl(q\bigl(\tfrac{t}{s}+\tau\bigr)\bigr)-c\right) dt-J_{c,(\tau, \omega_{q})}(q)\displaybreak[3]\\
&= \frac{1}{s}\int_{\tau}^{\omega_{q}}\tfrac 12|\dot q(t)|^{2}\, dt+s\int_{\tau}^{\omega_{q}}(V(q(t))-c)\, dt-J_{c,(\tau, \omega_{q})}(q)\\
&= \bigl(\tfrac{1}{s}-1\bigr)\int_{\tau}^{\omega_{q}}\tfrac 12|\dot q(t)|^{2}\, dt
+(s-1)\int_{\tau}^{\omega_{q}}(V(q(t))-c)\, dt.
\end{align*}
{Hence, setting } $T:=\int_{\tau}^{\omega_{q}}\tfrac12|\dot q(t)|^{2}\, dt$ and $U:=\int_{\tau}^{\omega_{q}}(V(q(t))-c)\, dt$, {we get  that}  the real function $s\mapsto f(s)=(\tfrac 1s-1)T+(s-1)U$ is non-negative over $(0,+\infty)$. {Since } it achieves a non-negative minimum  at $s=\sqrt{T/U}$, where
$$
0\leq\; f\biggl(\sqrt{\tfrac{T}{U}}\biggr)=\left(\sqrt{\tfrac{U}{T}}-1\right)T+\left(\sqrt{\tfrac{T}{U}}-1\right)U=-\biggl(\sqrt{T}-\sqrt{U}\biggr)^{2},
$$
{we conclude}  $T=U$, i.e. \eqref{eqn:tau}.\hfill\break
A similar argument also shows that for any $\sigma\in(\alpha_{q},\omega_{q})$ one has the identity
\begin{equation}\label{eqn:sigma}
\tfrac12\int_{\alpha_{q}}^{\sigma}|\dot q(t)|^{2}\, dt=\int_{\alpha_{q}}^{\sigma} (V(q(t))-c)\, dt.
\end{equation}
{Then,  by \eqref{eqn:tau} and \eqref{eqn:sigma}, using the additivity property of the integral, we conclude the proof.} 
\end{proof}

{We are now able to prove that every $q\in\MM_c$ satisfies the following pointwise energy constraint:}

\begin{Lemma}\label{lem:energia}
Every $q\in\MM_c$ verifies
\[
E_{q}(t):=\tfrac12|\dot q(t)|^{2}-V(q(t))= -c,\;\,\text{ for all }\;t\in (\alpha_{q},\omega_{q}).
\]
\end{Lemma}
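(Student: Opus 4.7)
The plan is to combine two facts already established: (i) by Lemma~\ref{lem:minimisol}, $q\in C^{2}((\alpha_{q},\omega_{q}),\R^{N})$ solves $\ddot q=\nabla V(q)$ on $(\alpha_{q},\omega_{q})$, so the energy $E_{q}$ is \emph{a priori} constant on this interval; and (ii) Lemma~\ref{lem:sol-pohoz} provides a Pohozaev-type identity $\frac12\int_{\tau}^{\sigma}|\dot q|^{2}\,dt=\int_{\tau}^{\sigma}(V(q)-c)\,dt$ on any subinterval $(\tau,\sigma)\subseteq(\alpha_{q},\omega_{q})$, which pins down the value of that constant.

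First I would fix any subinterval $(\tau,\sigma)\subset(\alpha_{q},\omega_{q})$ and compute
\[
\frac{d}{dt}E_{q}(t)=\dot q(t)\cdot\ddot q(t)-\nabla V(q(t))\cdot\dot q(t)=\dot q(t)\cdot(\ddot q(t)-\nabla V(q(t)))=0,
\]
using that $q$ solves the equation on $(\alpha_{q},\omega_{q})$. Hence there exists a constant $E\in\R$ such that $\tfrac12|\dot q(t)|^{2}-V(q(t))=E$ for every $t\in(\alpha_{q},\omega_{q})$, i.e., $\tfrac12|\dot q(t)|^{2}=(V(q(t))-c)+(E+c)$.

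Next I would integrate this identity over $(\tau,\sigma)$ to obtain
\[
\int_{\tau}^{\sigma}\tfrac12|\dot q(t)|^{2}\,dt=\int_{\tau}^{\sigma}(V(q(t))-c)\,dt+(E+c)(\sigma-\tau).
\]
Comparing with the equality furnished by Lemma~\ref{lem:sol-pohoz}, the two integrals match, forcing $(E+c)(\sigma-\tau)=0$. Since $\sigma>\tau$, this yields $E=-c$, so $E_{q}(t)=-c$ for all $t\in(\alpha_{q},\omega_{q})$, as desired.

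Because the two ingredients are in place, there is no real obstacle here: the only point to be careful about is that the Pohozaev identity from Lemma~\ref{lem:sol-pohoz} applies on \emph{arbitrary} subintervals of $(\alpha_{q},\omega_{q})$, which is exactly what allows us to cancel the two integrals and isolate the constant $(E+c)(\sigma-\tau)$. No approximation or regularity issue arises inside $(\alpha_{q},\omega_{q})$, since $q$ is already $C^{2}$ there.
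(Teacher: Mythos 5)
Your proof is correct, and it takes a genuinely different (and more economical) route than the paper. You observe that the conservation of $E_{q}$ on $(\alpha_{q},\omega_{q})$ (from Lemma~\ref{lem:minimisol}) reduces the problem to identifying a single constant $E$, and then you pin down $E=-c$ by integrating $\tfrac12|\dot q|^{2}=(V(q)-c)+(E+c)$ over an arbitrary bounded subinterval $(\tau,\sigma)\subset(\alpha_{q},\omega_{q})$ and cancelling against the identity of Lemma~\ref{lem:sol-pohoz}; since $[\tau,\sigma]$ is compact in $(\alpha_{q},\omega_{q})$ and $q$ is $C^{2}$ there, all integrals are finite and the cancellation is legitimate, so $(E+c)(\sigma-\tau)=0$ forces $E=-c$ in one stroke. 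The paper instead splits into cases: when $\alpha_{q}=-\infty$ (or $\omega_{q}=+\infty$) it uses only the finiteness of $J_{c}(q)=m_{c}$ to produce a sequence $t_{n}\to-\infty$ along which both $\tfrac12|\dot q(t_{n})|^{2}\to0$ and $V(q(t_{n}))\to c$, and when both endpoints are finite it applies Lemma~\ref{lem:sol-pohoz} in an averaged form on shrinking intervals $(y,\omega_{q})$ together with the contact condition $V(q(\omega_{q}))=c$ from Lemma~\ref{lem:sol-W-alphabeta}. Your argument avoids the case distinction entirely and does not need Lemma~\ref{lem:sol-W-alphabeta} or the asymptotic behaviour of $q$; the only thing it relies on beyond conservation of energy is that Lemma~\ref{lem:sol-pohoz} holds on \emph{arbitrary} subintervals, which is exactly what that lemma states (and its proof does not use the present lemma, so there is no circularity). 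The paper's longer route has the minor side benefit of exhibiting explicitly how the energy value is detected at the endpoints (via $\dot q\to0$ and $V(q)\to c$), information that is reused later, but as a proof of the stated identity your version is complete and cleaner.
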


\begin{proof}[{\bf Proof of Lemma~\ref{lem:energia}}] 
 By Lemma~\ref{lem:minimisol}, $q$ solves the system of differential equations~\eqref{eqn:gradSystem} on $(\alpha_{q},\omega_{q})$ and so the energy $E_{q}(t)=\frac 12|\dot q(t)|^{2}-V(q(t))$ must be constant on $(\alpha_{q},\omega_{q})$. We are left to show that the value of this constant is precisely $-c$. Let us first treat the case $\alpha_{q}=-\infty$. We observe that $J_c(q)=\int_\R\frac 12|\dot q(t)|^{2}+(V(q(t))-c)=m_c<+\infty$ direcly yields 
 \[
 \liminf_{t\to -\infty}\tfrac12|\dot q(t)|^{2}+V(q(t))-c=0.
 \]
Since $V(q(t))\geq c$ for any $t\in\R$, we deduce that there exists a sequence $(t_{n})$ such that $t_{n}\to -\infty$  for which $\lim_{n\to+ \infty}\tfrac12|\dot q(t_{n})|^{2}=0$ and $\lim_{n\to\infty}V(q(t_{n}))=c$. So necessarily
 \[
 \lim_{n\to+\infty}\tfrac12|\dot q(t_{n})|^{2}-V(q(t_{n}))=-c,
 \]
{Hence $ E_{q}(t)=-c$ for all $t\in(-\infty,\omega_q)$, proving }  the lemma in the case $\alpha_{q}=-\infty$. Clearly, the argument above can be easily applied when $\omega_{q}=+\infty$, to show $E_{q}(t)= -c$ {for all $t\in(\alpha_q,+\infty)$}.\\
Let us consider now the case $-\infty<\alpha_{q}<\omega_{q}<+\infty$.  As $q\in\MM_c$, Lemma \ref{lem:sol-W-alphabeta} tells us that $V(q(\omega_{q}))= c$, and so by continuity of the potential it follows ${\lim_{t\to\omega_{q}^{-}}V(q(t))=c}$. A similar continuity argument shows
\[
\lim_{y\to\omega_{q}^{-}}\frac 1{\omega_{q}-y}\int_{y}^{\omega_{q}}(V(q({t}))-c)\,dt=0,
\]
which, in light of the identity of Lemma \ref{lem:sol-pohoz}, directly proves that
\[
\lim_{y\to\omega_{q}^{-}}\frac 1{\omega_{q}-y}\int_{y}^{\omega_{q}}\tfrac12|\dot q(t)|^{2}\,dt=0.
\]
For then, $\liminf_{y\to\omega_{q}^{-}}\tfrac12|\dot q(t)|^{2}=0$, from which
\[
 \liminf_{t\to \omega_{q}^{-}}\tfrac12|\dot q(t)|^{2}-V(q(t))=\liminf_{y\to\omega_{q}^{-}}\tfrac12|\dot q(t)|^{2}-\lim_{t\to \omega_{q}^{-}}V(q(t))=-c,
\]
{proving that $E_q(t)=-c$ for every $t\in(\alpha_q,\omega_q)$.}
\end{proof}

{We are now able to construct the connecting solutions, concluding the Proof of Theorem~\ref{P:main1}}

\medskip
{For $q\in\MM_c$  and provided $\omega_{q}<+\infty$, we denote $q_{+}$ the {\sl extension of $q$ by reflection} with respect to $\omega_{q}$\medskip
\begin{align}
q_{+}(t)& :=\left\{
\begin{array}{ll}
q(t)& \text{if }\,t\in (\alpha_{q},\omega_{q}],\\ 
q(2\omega_{q}-t)& \text{if }\,t\in (\omega_{q},2\omega_{q}-\alpha_{q}).
\end{array}\right.\label{eqn:extensions}
\end{align}
Similarly, for $q\in\MM_c$ with $\alpha_q>-\infty$, let $q_{-}$ be the {\sl extension of $q$ by reflection} with respect to  $\alpha_{q}$
\begin{align}
 q_{-}(t)&:=\left\{
\begin{array}{ll}
q(2\alpha_{q}-t)& \text{if }\,t\in (2\alpha_{q}-\omega_{q},\alpha_{q}),\\ 
q(t)&\text{if }\,t\in [\alpha_{q},\omega_{q}).
\end{array}\right.\notag
\end{align}}

We have
\begin{Lemma}\label{lem:brake}
For $q\in\MM_c$, the following properties hold:
\begin{enumerate}[label=$\bullet$,itemsep=2pt]
\item If $\omega_{q}<+\infty$, then $\lim_{t\to \omega_{q}^{-}}\dot q_+(t)=0$, and $q_{+}$ 
is a solution of \eqref{eqn:gradSystem} on $(\alpha_{q},2\omega_{q}-\alpha_{q})$. Furthermore, there results $\nabla V(q_{+}(\omega_{q}))\neq 0$.
\item If $\alpha_{q}>-\infty$, then $\lim_{t\to \alpha_{q}^{+}}\dot q_-(t)=0$, and $q_{-}$ is a solution of \eqref{eqn:gradSystem} on $(2\alpha_{q}-\omega_{q},\omega_{q})$. Furthermore, there results $\nabla V(q_{-}(\alpha_{q}))\neq 0$.
\end{enumerate}  
 \end{Lemma}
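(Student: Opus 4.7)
\smallskip

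\noindent\textbf{Proof plan for Lemma \ref{lem:brake}.}

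The two bullet points are symmetric, so I will only describe the argument for the case $\omega_q<+\infty$; the case $\alpha_q>-\infty$ is completely analogous, obtained by applying the reflection about $\alpha_q$ instead of $\omega_q$. The plan has three ingredients: the vanishing of the velocity at $\omega_q$, the fact that the reflection $q_+$ is a $C^2$ solution on the enlarged interval, and the non-degeneracy $\nabla V(q(\omega_q))\neq 0$.

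\smallskip

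\noindent\emph{Step 1: $\displaystyle\lim_{t\to\omega_q^-}\dot q(t)=0$.}
By Lemma~\ref{lem:energia}, for every $t\in(\alpha_q,\omega_q)$ one has $\tfrac12|\dot q(t)|^2=V(q(t))-c$. By Lemma~\ref{lem:sol-W-alphabeta}, $\omega_q<+\infty$ forces $q(\omega_q)\in\mathcal{V}^c_+$ with $V(q(\omega_q))=c$. The continuity of $V$ and $q$ then gives $V(q(t))\to c$ as $t\to\omega_q^-$, hence $|\dot q(t)|^2\to 0$. Since $q_+$ agrees with $q$ on $(\alpha_q,\omega_q]$, the same limit holds for $\dot q_+$.

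\smallskip

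\noindent\emph{Step 2: $q_+$ is a classical $C^2$ solution on $(\alpha_q,2\omega_q-\alpha_q)$.}
On $(\alpha_q,\omega_q)$ this is given by Lemma~\ref{lem:minimisol}. On $(\omega_q,2\omega_q-\alpha_q)$, write $q_+(t)=q(2\omega_q-t)$; then $\dot q_+(t)=-\dot q(2\omega_q-t)$ and $\ddot q_+(t)=\ddot q(2\omega_q-t)=\nabla V(q(2\omega_q-t))=\nabla V(q_+(t))$, so the equation holds pointwise. To glue at $t=\omega_q$, note that by Step~1 the one-sided limits of $\dot q_+$ at $\omega_q$ both equal $0$, so $q_+\in C^1$ with $\dot q_+(\omega_q)=0$; then $\ddot q_+$ extends continuously across $\omega_q$ to $\nabla V(q(\omega_q))$, which upgrades $q_+$ to a $C^2$ classical solution of \eqref{eqn:gradSystem} on the whole interval.

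\smallskip

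\noindent\emph{Step 3: $\nabla V(q(\omega_q))\neq 0$ (the main obstacle).}
I would argue by contradiction: assume $\nabla V(p_0)=0$ with $p_0:=q(\omega_q)$. Combined with $\dot q(\omega_q)=0$ and the energy identity, this degeneracy of the critical point at level $c$ would make it possible to produce a competitor in $\Gamma_c$ with action strictly less than $m_c$, contradicting $q\in\MM_c$. Concretely, the plan is to use the energy identity $|\dot q(t)|=\sqrt{2(V(q(t))-c)}$ together with the estimate $V(x)-c=\int_0^1\nabla V(p_0+s(x-p_0))\cdot(x-p_0)\,ds\leq \eta(r)|x-p_0|$ for $|x-p_0|\leq r$, where $\eta(r):=\sup_{|y-p_0|\leq r}|\nabla V(y)|\to 0$ as $r\to 0^+$ (this uses only $V\in C^1$). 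A Gronwall-type comparison on $|q(t)-p_0|$ then yields the quantitative bound $|q(t)-p_0|\leq \tfrac{\eta(r)}{2}(\omega_q-t)^2$ near $\omega_q$. From this I would deduce that near $\omega_q$ the action $J_{c,(\omega_q-\delta,\omega_q)}(q)$ is so small that $q$ can be truncated before $\omega_q$ and connected by a short curve to a nearby point $p_1\in\partial\mathcal{V}^c_+$ where $\nabla V(p_1)\neq 0$ (such points exist in every neighborhood of $p_0$ since otherwise an entire neighborhood of $p_0$ would lie in $\{V=c\}$, putting $p_0$ in the interior of $\mathcal{V}^c_+$ and contradicting $V(q(t))>c$ for $t<\omega_q$ close to $\omega_q$), in such a way that the modified path still belongs to $\Gamma_c$ while strictly lowering $J_c$ below $m_c$.

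\smallskip

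\noindent The main obstacle is Step~3: producing the explicit competitor requires a careful trade-off between the smallness of $\eta(r)$ and the geometric fact that $\partial\mathcal{V}^c_+\cap\{\nabla V\neq 0\}$ accumulates at $p_0$, and I expect this to be the most delicate part of the proof.
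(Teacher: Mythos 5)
Your Steps 1 and 2 coincide with the paper's proof: the vanishing of $\dot q$ at $\omega_q$ follows from Lemma~\ref{lem:sol-W-alphabeta} ($V(q(\omega_q))=c$) together with the energy identity of Lemma~\ref{lem:energia}, and the reflection is upgraded to a $C^2$ solution across $\omega_q$ exactly as you describe. The divergence — and the problem — is in Step 3, which you yourself flag as the main obstacle. The paper's argument there is much shorter and of a completely different nature: if $\nabla V(q(\omega_q))=0$, then $q(\omega_q)$ is an equilibrium of \eqref{eqn:gradSystem}; since $q_+$ solves \eqref{eqn:gradSystem} on $(\alpha_q,2\omega_q-\alpha_q)$ with $q_+(\omega_q)=q(\omega_q)$ and $\dot q_+(\omega_q)=0$, uniqueness for the Cauchy problem forces $q_+\equiv q(\omega_q)\in\mathcal{V}^c_+$ on the whole interval, which is incompatible with $\lim_{t\to\alpha_q^+}\dist(q_+(t),\mathcal{V}^c_-)=0$ and $\dist(\mathcal{V}^c_-,\mathcal{V}^c_+)>0$. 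No competitor is needed.

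Your variational route, as sketched, has a genuine gap: you never supply the mechanism by which the degeneracy $\nabla V(p_0)=0$ produces a competitor with action \emph{strictly below} $m_c$. The quantitative facts you derive (the bound $|q(t)-p_0|\leq\tfrac{\eta(r)}{2}(\omega_q-t)^2$ and the consequent smallness of $J_{c,(\omega_q-\delta,\omega_q)}(q)$) are true but harmless: a minimizer is perfectly entitled to have small action near its contact time, and rerouting the tail of $q$ to a nearby boundary point $p_1$ with $\nabla V(p_1)\neq 0$ replaces one nonnegative contribution by another, with no reason for a strict decrease. In fact the conclusion you are aiming at cannot be reached by cost comparison alone, because it is not a statement about the value of $J_c$ but about the local dynamics at the contact point. (Your parenthetical claim that $\partial\mathcal{V}^c_+\cap\{\nabla V\neq 0\}$ must accumulate at $p_0$ is also not justified as written: the absence of such points near $p_0$ only says $\nabla V=0$ on $\partial\mathcal{V}^c_+$ near $p_0$, which does not put a full neighborhood of $p_0$ inside $\{V=c\}$.) The fix is to abandon the competitor and argue as the paper does, via uniqueness of the constant solution at the equilibrium; note that this does implicitly use a uniqueness property of the ODE beyond bare continuity of $\nabla V$, which may be what pushed you toward a purely variational argument, but the competitor strategy as proposed does not close.
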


\begin{proof}[{\bf Proof of Lemma~\ref{lem:brake}}] 
Given $q\in\MM_c$, let us assume $\omega_{q}<+\infty$; the other case where $\alpha_{q}>-\infty$ can be treated analogously.  By Lemma \ref{lem:sol-W-alphabeta} we already know that $V(q(\omega_{q}))= c$, so by continuity, $\lim_{t\to\omega_{q}^{-}}V(q(t))-c=0$, which in turn shows $\lim_{t\to\omega_{q}^{-}}|\dot q(t)|^{2}=\lim_{t\to\omega_{q}^{-}}2(V(q(t)-c)=0$, due to Lemma \ref{lem:energia}.\\
The system~\eqref{eqn:gradSystem} is of second order and autonomous, so starting from the solution $q$ of \eqref{eqn:gradSystem} over $(\alpha_{q},\omega_{q})$ (in view of~Lemma~\ref{lem:minimisol}) we immediately get that $q_{+}$ is a solution of \eqref{eqn:gradSystem} on $(\alpha_{q},\omega_{q})\cup$ ${(\omega_{q},2\omega_{q}-\alpha_{q})}$. Since $q_{+}$ is continuous on the entire interval $(\alpha_{q}, 2\omega_{q}-\alpha_{q})$ and $\lim_{t\to\omega_{q}}\dot q_{+}(t)=0$ as argued in the preceding paragraph, we deduce that $q_{+}\in C^{1}(\alpha_{q}, 2\omega_{q}-\alpha_{q})$. Using now the fact that $q_{+}$ solves \eqref{eqn:gradSystem} on $(\alpha_{q},2\omega_{q}-\alpha_{q})\setminus\{\omega_q\}$, we readily see that the second derivative exists $\ddot q_{+}(\omega_{q}):=\lim_{t\to\omega_{q}}\ddot q_{+}(t)=\nabla V(q_{+}(\omega_{q}))$. For then $q_{+}\in C^{2}(\alpha_{q}, 2\omega_{q}-\alpha_{q})$, and furthermore, it solves \eqref{eqn:gradSystem} on the entire interval $(\alpha_{q}, 2\omega_{q}-\alpha_{q})$.\par\noindent
In order to conclude the proof, we need to argue that $\nabla V(q_{+}(\omega_{q}))\neq 0$. Suppose on the contrary that $\nabla V(q_{+}(\omega_{q}))=0$, then $q(\omega_{q})$ is an equilibrium of \eqref{eqn:gradSystem}. Since $\dot q_{+}(\omega_{q})=0$ and $q_{+}(\omega_{q})=q(\omega_{q})$,
the uniqueness of solutions to the Cauchy problem shows $q_{+}(t)= q(\omega_{q})$ for any $t\in (\alpha_{q}, 2\omega_{q}-\alpha_{q})$. 

However, this contradicts Lemma \ref{lem:sol-W-alphabeta}, for which $q_+(\omega_q)=q(\omega_{q})\in\mathcal{V}^{c}_+$ and 
\[
\lim_{t\to\alpha_{q}^{+}}\dist(q_+(t),\mathcal{V}^c_-)=\lim_{t\to\alpha_{q}^{+}}\dist(q(t),\mathcal{V}^c_-)=0.
\]
\end{proof}
Thanks to Lemma \ref{lem:brake}, we know that in the event $q\in\MM_c$ satisfies $-\infty<\alpha_{q}<\omega_{q}<+\infty$, then $q_{+}$ is a solution on the bounded interval $(\alpha_{q},2\omega_{q}-\alpha_{q})$, and it verifies
\begin{align*}
&\lim_{t\to\alpha_{q}^{+}}\dot q_{+}(t)=\lim_{t\to(2\omega_{q}-\alpha_{q})^{-}}\dot q_{+}(t)=0\;\;\hbox{ and }\;\;
\lim_{t\to\alpha_{q}^{+}}q_{+}(t)=\lim_{t\to(2\omega_{q}-\alpha_{q})^{-}}q_{+}(t)=q(\alpha_{q}).
\end{align*}

This property implies, in particular, that the $2(\omega_{q}-\alpha_{q})$-periodic extension of $q_{+}$ is well defined. In fact, by Lemma \ref{lem:minimisol} this extension is a classical $2(\omega_{q}-\alpha_{q})$-periodic solution of \eqref{eqn:gradSystem}. Clearly, one also makes analogous statements for $q_{-}$.\smallskip

 {Hence, in the case  $q\in\MM_c$ is so that $-\infty<\alpha_{q}<\omega_{q}<+\infty$, we denote $T:=2(\omega_{q}-\alpha_{q})$, { and we let $q_c$ be the {\sl T-periodic extension} of $q_{+}$ {\em (or $q_{-}$)} over $\R$, obtaining that  $q_c$ is a $T$-periodic classical solution of \eqref{eqn:gradSystem} over $\R$, that satisfies the pointwise energy constraint $E_{{q_c}}(t)=-c$ for all $t\in\R$.  Furthermore, by Lemma \ref{lem:sol-W-alphabeta} and Lemma \ref{lem:brake}, it connects $\mathcal{V}^c_-$ to $\mathcal{V}^{c}_+$, in the following sense
\begin{enumerate}[label={\rm(\roman*)},itemsep=2pt]
 \item[(i)] $V(q_c(\alpha_{q}))=V(q_c(\omega_{q}))=c$, $V(q_c(t))>c$ for any $\,t\in (\alpha_{q},\omega_{q})$ and $ \dot {q_c}(\alpha_{q})= \dot {q_c}(\omega_{q})=0$,
\item[(ii)] $q_c(\alpha_{q})\in\mathcal{V}^{c}_-$, $q_c(\omega_{q})\in\mathcal{V}^{c}_+$, $\nabla V(q_{c}(\alpha_q)) \neq 0$ and $\nabla V(q_{c}(\omega_q)) \neq 0$,
\item[(iii)] $q_c(\alpha_{q}-t)=q_c(\alpha_{q}+t)\,$ and $\;q_c(\omega_{q}-t)=q_c(\omega_{q}+t)$, for any $t\in \R$.
\end{enumerate}}
 \medskip

{Therefore,} when a minimizer $q\in\MM_c$ satisfies $-\infty<\alpha_{q}<\omega_{q}<+\infty$, then it generates a {solution which periodically oscillates back and forth between the boundary of the two sets $\mathcal{V}^{c}_-$ and $\mathcal{V}^{c}_+$, that is a }{\sl brake orbit type} solution of \eqref{eqn:gradSystem} connecting $\mathcal{V}^{c}_-$ and $\mathcal{V}^{c}_+$. {Moreover, it is bounded, and in fact, verifies \eqref{eqn:condbrake}}.  {Hence, denoting $\sigma=\alpha_q$ and $\tau=\omega_q$, the assertion (a) in Theorem~\ref{P:main1} is proved.}
\smallskip

The remaining cases, where {the} minimizer $q\in\MM_c$ satisfies either $\omega_{q}=+\infty$, or $\alpha_{q}=-\infty$, are dealt in Lemma~\ref{lem:omegalimite} below. Before stating this lemma, let us introduce some notation. For $q\in\MM_c$ having $\omega_{q}=+\infty$, we will denote the $\omega$-limit set of $q$ by
\[
\Omega_{q}:=\bigcap_{t>\alpha_{q}}\overline{\{q(s):s\geq t\}}.
\]
The boundedness of any minimizer $q\in\MM_c$, $\|q\|_{L^{\infty}(\R,\R^N)}\leq R$, shows that $\Omega_{q}\subset \overline{B_{R}(0)}$. Also  $\Omega_{q}$ is a closed, connected subset of $\R^n$, being the intersection of closed connected sets. Analogously, for $q\in\MM_c$ having $\alpha_{q}=-\infty$, we will write 
\[
\mathcal{A}_{q}:=\bigcap_{t<\omega_{q}}\overline{\{ q(s): s\leq t\}},
\]
for the $\alpha$-limit  set of $q$, which is a closed, connected subset of $\overline{B_{R}(0)}$. Hence, we have

\begin{Lemma}\label{lem:omegalimite}
 Suppose $q\in\MM_c$ has either $\alpha_{q}=-\infty$, or $\,\omega_{q}=+\infty$. Then, 
 $\dot q(t)\to 0$ as $t\to-\infty$, or as $t\to +\infty$, respectively. Moreover the $\alpha$-limit set of $q$, or the $\omega$-limit set of $q$, respectively, is constituted by critical points of $V$ at level $c$,  {namely}
 \begin{enumerate}[label=$\bullet$,itemsep=2pt]
 \item $\mathcal{A}_{q}\subset \mathcal{V}^c_-\cap\{\xi\in\R^N: V(\xi)=c,\,\nabla V(\xi)=0\}\,$,~or respectively,
 \item $\Omega_{q}\subset \mathcal{V}^{c}_+\cap\{\xi\in\R^N: V(\xi)=c,\,\nabla V(\xi)=0\}$. \end{enumerate}
 \end{Lemma}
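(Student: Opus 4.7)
The plan is to treat the case $\omega_{q}=+\infty$ (the case $\alpha_{q}=-\infty$ is symmetric after reversing time). First I would establish the limit $\dot q(t)\to 0$ as $t\to +\infty$. Lemma~\ref{lem:sol-W-alphabeta} gives $\dist(q(t),\mathcal{V}^{c}_{+})\to 0$ as $t\to+\infty$. Since $\|q\|_{L^{\infty}(\R,\R^N)}\le R$, for any sequence $t_n\to+\infty$ one can pick $\xi_n\in\mathcal{V}^{c}_{+}\cap \overline{B_R(0)}$ with $|q(t_n)-\xi_n|\to 0$. Continuity of $V$ on $\overline{B_R(0)}$ combined with $V(\xi_n)\le c$ and $V(q(t))\ge c$ yields $V(q(t))\to c$ as $t\to+\infty$. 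The energy identity from Lemma~\ref{lem:energia} then gives $\tfrac12|\dot q(t)|^{2}=V(q(t))-c\to 0$, proving the first assertion.

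Next, I would prove the inclusion $\Omega_{q}\subset \mathcal{V}^{c}_{+}\cap\{\,V=c,\,\nabla V=0\,\}$. Fix $\xi\in\Omega_{q}$ and choose $t_{n}\to+\infty$ with $q(t_{n})\to\xi$. Closedness of $\mathcal{V}^{c}_{+}$ combined with $\dist(q(t_{n}),\mathcal{V}^{c}_{+})\to 0$ gives $\xi\in\mathcal{V}^{c}_{+}$, and continuity of $V$ together with $V(q(t))\to c$ gives $V(\xi)=c$. To show $\nabla V(\xi)=0$, I would introduce the translated solutions
\[
q_{n}(s):=q(s+t_{n}),\quad s\in\R.
\]
By Lemma~\ref{lem:minimisol} each $q_{n}$ solves $\ddot q_{n}=\nabla V(q_{n})$ on $(\alpha_{q}-t_{n},+\infty)$, and $\|q_{n}\|_{L^{\infty}}\le R$, so $\ddot q_{n}$ is uniformly bounded on compact sets of $\R$. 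Together with $\|\dot q_{n}\|_{L^{2}(\R,\R^N)}^{2}\le 2m_{c}$, an Ascoli--Arzel\`a argument yields (along a subsequence) $q_{n}\to q_{\infty}$ in $C^{1}_{loc}(\R,\R^N)$, where $q_{\infty}$ solves $\ddot q_{\infty}=\nabla V(q_{\infty})$ and satisfies $q_{\infty}(0)=\xi$, $\dot q_{\infty}(0)=0$ (since $\dot q(t_{n})\to 0$ by the first step).

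Finally, for any fixed $s\in\R$ one has $t_{n}+s\to+\infty$, so $V(q_{\infty}(s))=\lim_{n}V(q(t_{n}+s))=c$ by the first step; that is, $V(q_{\infty})\equiv c$. Conservation of energy along the solution $q_{\infty}$ gives
\[
\tfrac12|\dot q_{\infty}(s)|^{2}-V(q_{\infty}(s))=\tfrac12|\dot q_{\infty}(0)|^{2}-V(\xi)=-c\quad\text{for all }s\in\R,
\]
hence $|\dot q_{\infty}|\equiv 0$, so $q_{\infty}\equiv\xi$. Substituting into the limiting ODE yields $\nabla V(\xi)=\ddot q_{\infty}(0)=0$, as claimed. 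Since $\Omega_{q}$ is a closed connected subset of $\overline{B_{R}(0)}$ (as noted before the lemma), the desired inclusion is established. The main technical point is the Ascoli compactness for the translates, which rests on the uniform $L^{\infty}$ bound on $q$ given by $q\in\MM_{c}$ and the $C^{1}$ regularity of $V$; everything else flows directly from the previously-established energy identity (Lemma~\ref{lem:energia}) and the asymptotic behavior from Lemma~\ref{lem:sol-W-alphabeta}.
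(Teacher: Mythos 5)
Your proof is correct and follows essentially the same route as the paper: first deduce $V(q(t))\to c$ and hence $\dot q(t)\to 0$ from Lemma~\ref{lem:energia}, then translate by $t_n$ and show the translates converge to the constant $\xi$, so that the ODE forces $\nabla V(\xi)=0$. The only (harmless) difference is cosmetic: the paper identifies the limit of the translates directly from $\sup_{I}|\dot q(\cdot+t_n)|\to 0$ and the fundamental theorem of calculus, whereas you extract it by Ascoli--Arzel\`a and then use conservation of energy along $q_\infty$ to see it is constant.
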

 
\begin{proof}[{\bf Proof of Lemma~\ref{lem:omegalimite}}] 
Given $q\in\MM_c$, let us assume $\omega_{q}=+\infty$; the case $\alpha_{q}=-\infty$ is treated similarly. 
{First, note that $\lim_{t\to +\infty}\dot q(t)=0$. Indeed, the fact that $\lim_{t\to +\infty}\dist(q(t),\mathcal{V}^{c}_+)=0$ (cf.~Lemma \ref{lem:limiti}) combined with the uniform continuity of $V$ on $\overline{B_{R}(0)}$, where $\|q\|_{L^{\infty}(\R,\R^N)}\leq R$, proves that $\lim_{t\to +\infty}V(q(t))=c$. But this together with Lemma \ref{lem:energia} show that $\lim_{t\to +\infty}|\dot q(t)|^{2}=\lim_{t\to +\infty}2(V(q(t))-c)=0$.}\\
To prove the second statement, let $\xi\in \Omega_{q}$ so there is a sequence $t_{n}\to +\infty$ such that $q(t_{n})\to \xi$ as $n\to+\infty$. Since  $\lim_{t\to +\infty}\dist(q(t),\mathcal{V}^{c}_+)=0$, it follows $\dist(\xi,\mathcal{V}^{c}_+)=$ $\lim_{n\to\infty}\dist(q(t_{n}),\mathcal{V}^{c}_+)=0$, whereby $\xi\in \mathcal{V}^{c}_+$ in light of the closeness of $\mathcal{V}^{c}_+$.  On the other hand, we have already seen that $\lim_{t\to +\infty}V(q(t))=c$, so the continuity of the potential yields $V(\xi)=\lim_{n\to\infty}V(q(t_{n}))=c$. All of this shows that $\Omega_{q}\subset \mathcal{V}^{c}_+\cap\{\xi\in\R^N:V(\xi)=c\}$. There just remains to be shown that $\nabla V(\xi)=0$. \\
For $(t_n)\subset\R$ given as above, consider the following sequence of translates of $q$, 
\[
q_{n}(t)=q(t+t_{n})\;\,\text{ for }\;t\in (\alpha_{q}-t_{n},+\infty)\;\text{ and }\;n\in\N,
\]
 and note that for any bounded interval $I\subset\R$ we have that $\sup_{t\in I}|\dot q_{n}(t)|\to 0$ as $n\to+\infty$, because $\dot q(t)\to 0$ as $t\to+\infty$. In particular, for any $t\in\R$ we deduce that 
\[
\lim_{n\to+\infty}q_{n}(t)=\lim_{n\to+\infty}q_{n}(0)+\lim_{n\to+\infty}\int_{0}^{t}\dot q_{n}(s)\,ds=\lim_{n\to+\infty}q(t_n)=\xi.
\]
Put another way, $q(\cdot+t_{n})\to\xi$ as $n\to+\infty$, with respect to the $C^{1}_{loc}(\R,\R^N)$-topology. Since $\ddot q(t+t_{n})=\nabla V(q(t+t_{n}))$ for $t\in (\alpha_{q}-t_{n},+\infty)$, we conclude that $\ddot q_{n}(t)\to\nabla V(\xi)$ as $n\to+\infty$, uniformly on bounded subsets of $\R$. On the other hand, given any $t\neq 0$, we can take the limit as $n\to+\infty$ in the identity 
\[
\dot q_{n}(t)-\dot q_{n}(0)=\int_{0}^{t}\ddot q_{n}(s)\, ds.
\]
As argued before, the left side of the equation converges to $0$, while the right side converges to $t\nabla V(\xi)$. Therefore, $\nabla V(\xi)=0$, which concludes the proof.
\end{proof}

{We remark that the previous result proves, in particular, that if $c$ is a regular value for $V$ then for every $q\in\MM_c$ both $\alpha_q$ and $\omega_q$ must be finite.}\medskip

{For $q\in\MM_c$ with $\alpha_{q}=-\infty$ or $\omega_{q}=+\infty$, Lemma \ref{lem:brake} and Lemma \ref{lem:omegalimite} allow us to construct from it an entire solution of \eqref{eqn:gradSystem} with energy at level $-c$, connecting $\mathcal{V}^c_-$ and $\mathcal{V}^c_+$ in the sense of \eqref{eqn:condbrake}.
This entire solution is either a {\sl homoclinic type} solution or a {\sl heteroclinic type} solution, depending on the finiteness of $\alpha_{q}$ and $\omega_{q}$. Indeed, let us define }
\[
q_{c}(t):=
\left\{\begin{array}{ll}
q_{+}&\text{ if }\,-\infty=\alpha_{q}\;\text{ and }\;\omega_{q}<+\infty,\\[0.1cm]
q(t)&\text{ if }\,-\infty=\alpha_{q}\;\text{ and }\;\omega_{q}=+\infty,\\[0.1cm]
q_{-}&\text{ if }\,-\infty<\alpha_{q}\;\text{ and }\;\omega_{q}=+\infty.
\end{array}\right.
\]
and observe that in light of Lemma~\ref{lem:energia} and Lemma~\ref{lem:brake}, every subcase in the definition of $q_{c}$ solves \eqref{eqn:gradSystem} and has energy $E_{q_{c}}(t)=-c$ for all $t\in\R$. The way the remaining condition \eqref{eqn:condbrake} is fulfilled, depends on whether the trajectory of $q$ has finite contact times with $\mathcal{V}^c_-$ and $\mathcal{V}^{c}_+$, or if it accumulates at infinity, as in \eqref{eqn:accatinfty}.\smallskip

In the case where $q\in\MM_c$ has precisely one of $\alpha_{q}$ and $\omega_{q}$ finite, we say that $q_{c}$ is a {\sl homoclinic type} {solution} connecting $\mathcal{V}^c_-$ and $\mathcal{V}^{c}_+$. This {solution} satisfies the following properties
\begin{enumerate}[label=$\bullet$, itemsep=2pt]
\item If $\alpha_{q}=-\infty$ and $\omega_{q}<+\infty$, then we have $q_{c}:=q_{+}$. 
 In particular, adopting the notation of Theorem~\ref{P:main1} we denote $\Omega:=\mathcal{A}_{q}$ (=$\mathcal{A}_{q_{c}}=\Omega_{q_{c}}$) and $\sigma:=\omega_{q}$, thus $\Omega$ is a closed connected set and $\sigma<+\infty$. From Lemma \ref{lem:sol-W-alphabeta}, Lemma  \ref{lem:brake} and Lemma \ref{lem:omegalimite}, the definition of $q_{+}$ in \eqref{eqn:extensions} together with $\sigma=\omega_{q}$, it follows that 
{ \begin{itemize}
 \item[(i)] $V(q_{c}(\sigma))=V(q(\omega_{q}))=c$, $V(q_{c}(t))>c$ for any $t\neq\sigma$ and $\lim_{t\to\pm\infty}\dot{q_{c}}(t)=0$,
 \item[(ii)]  $q_{c}(\sigma)\in\mathcal{V}^{c}_+$ and $\nabla V(q_{c}(\sigma))=0$. Moreover, $\Omega\subset {\mathcal{V}^c_-\cap\{ \xi\in\R^N: {V(\xi)=c},\,\nabla V(\xi)=0\}}$, and $\lim_{t\to\pm\infty}\dist(q_{c}(t),\Omega)=0$,
 \item[(iii)] $q_{c}(\sigma+t)=q_{c}(\sigma-t)$ for any $t\in\R$.
 \end{itemize}}

\noindent Finally we remark that~\eqref{eqn:condbrake} is satisfied. Indeed, {by Lemma \ref{lem:sol-W-alphabeta}}, we get that the infimum $\inf_{t\in\R}\dist(q_{c}(t),\mathcal{V}^{c}_+)=0$ is achieved at $t=\sigma$ and ${\lim_{t\to\pm\infty}\dist(q_{c}(t),\mathcal{V}^c_-)=0}$, so in particular, $\inf_{t\in\R}\dist(q_{c}(t),\mathcal{V}^c_-)=0$. 

\item A similar reasoning allows us to conclude that, when $\alpha_{q}>-\infty$ and $\omega_{q}=+\infty$, the function $q_{c}:=q_{-}$ is a {\sl homoclinic type} solution connecting $\mathcal{V}^c_-$ and $\mathcal{V}^{c}_+$, for which we have that $\Omega:=\Omega_q$ ($=\Omega_{q_{c}}=\mathcal{A}_{q_{c}}$) is a connected closed set and setting $\sigma:=\alpha_{q}$, we get
{ \begin{itemize}
 \item[(i)] $V(q_{c}(\sigma))=V(q(\alpha_{q}))=c$, $V(q_{c}(t))>c$ for any $t\neq\sigma$ and $\lim_{t\to\pm\infty}\dot{q_{c}}(t)=0$,
 \item[(ii)]  $q_{c}(\sigma)\in\mathcal{V}^{c}_-$ and $\nabla V(q_{c}(\sigma))=0$. Moreover, $\Omega\subset {\mathcal{V}^c_+\cap\{ \xi\in\R^N: {V(\xi)=c},\,\nabla V(\xi)=0\}}$, and $\lim_{t\to\pm\infty}\dist(q_{c}(t),\Omega)=0$,
 \item[(iii)] $q_{c}(\sigma+t)=q_{c}(\sigma-t)$ for any $t\in\R$.
 \end{itemize}}
{Moreover, also in this case~\eqref{eqn:condbrake} is satisfied}.
\par\noindent
\end{enumerate}

In the remaining case, where $q\in\MM_c$ has $\alpha_{q}=-\infty$ and $\omega_{q}=+\infty$, we say that $q_{c}$ is a {\sl heteroclinic type} {solution} connecting $\mathcal{V}^c_-$ and $\mathcal{V}^{c}_+$. 
\begin{itemize}
\item Clearly $q_{c}=q$. Adopting the notation of Theorem~\ref{P:main1}, we will write $\mathcal{A}:=\mathcal{A}_{q}\, (=\mathcal{A}_{q_c})$ and $\Omega:=\Omega_{q}\,(=\Omega_{q_c})$, whence $\mathcal{A}$ and $\Omega$ are closed connected sets.  {Then, since $\alpha_{q}=-\infty$ and $\omega_{q}=+\infty$, by definition and Lemma~\ref{lem:omegalimite}, we obtain

 \begin{itemize}
 \item[(i)] $V(q_{c}(t))>c$ holds for any $t\in\R$ and $\lim_{t\to\pm\infty}\dot q_{c}(t)= 0$,
 \item[(ii)] $\mathcal{A}\subset\mathcal{V}^c_-\cap\{ \xi\in\R^N:V(\xi)=c,\,\nabla V(\xi)=0\}$, $\Omega\subset\mathcal{V}^{c}_+\cap\{ \xi\in\R^N:V(\xi)=c,\,\nabla V(\xi)=0\}$ and $\lim_{t\to-\infty}\dist(q_{c}(t),\mathcal{A})=\lim_{t\to+\infty}\dist(q_{c}(t),\Omega)=0$.
 \end{itemize}}
\noindent Finally, by {Lemma \ref{lem:sol-W-alphabeta}} we get $\lim_{t\to\pm\infty}\dist(q_{c}(t),\mathcal{V}^{c}_{\pm})=0$, {from which, condition \eqref{eqn:condbrake} is satisfied.} 
\end{itemize}
 
\smallskip

The proof of Theorem~\ref{P:main1} is now completed. 

\begin{Remark}\label{R:min}  Note that, by construction, the solution $q_{c}$ given by Theorem~\ref{P:main1} has a {\sl connecting time interval}
$(\alpha_{q_{c}},\omega_{q_{c}})\subseteq\R$, with $-\infty\leq\alpha_{q_{c}}<\omega_{q_{c}}\leq +\infty$ (coinciding in the statement with  {$(\sigma,\tau)$} in the case~\ref{enum:main:1}, with $(-\infty,\sigma)$ in the case~\ref{enum:main:2} and with $\R$ in the case~\ref{enum:main:3}, respectively), such that
\begin{enumerate}[label={\rm(\arabic*)}, itemsep=3pt]
	\item\label{enum:aposteriori:1} $V(q_{c}(t))>c$ for every $t\in (\alpha_{q_{c}},\omega_{q_{c}})$,
	\item $\lim\limits_{t\to\alpha_{q_{c}}^{+}}\dist(q_{c}(t),\mathcal{V}^{c}_-)=0$, and if $\alpha_{q_{c}}>-\infty$ then $\dot q_{c}(\alpha_{q_{c}})=0$, $V(q_{c}(\alpha_{q_{c}}))=c$ with $q_{c}(\alpha_{q_{c}})\in\mathcal{V}^{c}_-$,
	 \item $\lim\limits_{t\to\omega_{q_{c}}^{-}}\dist(q_{c}(t),\mathcal{V}^{c}_+)=0$, and if $\omega_{q_{c}}<+\infty$ then $\dot q_{c}(\omega_{q_{c}})=0$, $V(q_{c}(\omega_{q_{c}}))= c$ with $q_{c}(\omega_{q_{c}})\in\mathcal{V}^{c}_+$,
	 \item $J_{c,(\alpha_{q_{c}},\omega_{q_{c}})}(q_{c})=m_c$.
\end{enumerate}
Finally, the behavior of $q_{c}$ on $\R$ is obtained by (eventual) reflection and periodic continuation of its restriction to the interval $(\alpha_{q_{c}},\omega_{q_{c}})\subseteq\R$. In particular, we have
\begin{enumerate}[label={\rm(\arabic*)},itemsep=2pt, start=5]
	\item\label{enum:aposteriori:5} $\|q_{c}\|_{L^{\infty}(\R,\R^{N})}=\|q_{c}\|_{L^{\infty}((\alpha_{q_{c}},\omega_{q_{c}}),\R^{N})}$.
\end{enumerate}
\end{Remark}

\section{Some applications}

In this last section we illustrate some applications of Theorem~\ref{P:main1} to certain classes of potentials $V$, extensively studied in the literature. More precisely, we establish existence of connecting orbits to~\eqref{eqn:gradSystem}, in the case of double-well potentials, as well as potentials associated to duffing-like systems and multiple-pendulum-like systems. Additionally, we show in each one of these cases that solutions of heteroclinic type and homoclinic type, at energy level $0$, can be obtained as limits of sequences $(q_c)$ of solutions to~\eqref{eqn:gradSystem}, as $c\to 0^+$. 

\subsection{Double-well potential systems.} As a first example we consider {\sl double-well potential systems} like the ones considered e.g.~in \cite{[BMR]} in the PDE (non-autonomous) setting and in~\cite{alikakos2008connection,antonopoulos2016minimizers,FGN,sternberg2016heteroclinic} in the ODE setting, among others. Precisely, we assume that $V\in C^{1}(\R^{N})$ satisfies
\begin{enumerate}[label={\rm(V\arabic*)},itemsep=2pt]
\item\label{enum:doublewell:V1} There exist $a_{-}\not=a_{+}\in\R^{N}$ such that $V(a_{+})=V(a_{-})=0$, and $V(x)>0$ for $x\in\R^{N}\setminus\{a_{-},a_{+}\}$,
\item\label{enum:doublewell:V2} $\liminf_{|x|\to+\infty}V(x)=:\nu_{0}>0$.\end{enumerate}
\smallskip

As a consequence of Theorem~\ref{P:main1} we have the following result.
\begin{prop}\label{p:twowell} 
Assume that $V\in C^{1}(\R^{N})$ satisfies~\ref{enum:doublewell:V1} and~\ref{enum:doublewell:V2}. If $c\in [0,\nu_{0})$ is such that~\ref{enum:Vc} holds, then the coercivity condition \eqref{eqn:coercivita} of the energy functional $J_c$ over $\Gamma_c$ holds true. In particular, for such value of $c\in [0,\nu_{0})$, Theorem~\ref{P:main1} gives a solution $q_{c}\in C^{2}(\R,\R^N)$ to the problem~\eqref{eqn:gradSystem}-\eqref{eqn:condbrake} that satisfies the pointwise energy constraint {$E_{q_{c}}(t)=-c$ for all $t\in\R$}.
\end{prop}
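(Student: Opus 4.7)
The plan is to verify the coercivity condition~\eqref{eqn:coercivita} for $J_c$ on $\Gamma_c$, so that Theorem~\ref{P:main1} can be invoked. The starting point is that~\ref{enum:doublewell:V2} combined with the hypothesis $c<\nu_0$ yields constants $\mu\in(c,\nu_0)$ and $R_0>0$ such that $V(x)\geq\mu$ for every $|x|\geq R_0$; in particular $\mathcal{V}^c\subset B_{R_0}(0)$. Moreover, since $a_\pm\in\mathcal{V}^c_\pm$ by~\ref{enum:doublewell:V1}, a piecewise smooth curve from a point $\xi_-\in\partial\mathcal{V}^c_-$ to a point $\xi_+\in\partial\mathcal{V}^c_+$ staying in $\{V\geq c\}$ (e.g., routed through the exterior region $\{|x|\geq R_0\}$, where $V\geq\mu>c$), and extended by the constants $\xi_\pm$ outside a bounded time interval, produces an admissible function $q_0\in\Gamma_c$ with $J_c(q_0)<+\infty$, so that $m_c:=\inf_{\Gamma_c}J_c<+\infty$.

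The core of the argument is the following a priori $L^\infty$ bound: for every $q\in\Gamma_c$,
\[
\|q\|_{L^\infty(\R,\R^N)}\;\leq\; R_0+1+\frac{J_c(q)}{\sqrt{2(\mu-c)}}.
\]
To prove it, set $M:=\|q\|_{L^\infty(\R,\R^N)}$ and suppose $M>R_0+1$, otherwise nothing is needed. Since $\mathcal{V}^c_-\subset B_{R_0}$ and $\liminf_{t\to-\infty}\dist(q(t),\mathcal{V}^c_-)=0$, there exist times $t\to-\infty$ with $|q(t)|<R_0+1$. Given $\epsilon\in(0,M-R_0-1)$, pick $t^*\in\R$ with $|q(t^*)|>M-\epsilon$ and $t_1<t^*$ with $|q(t_1)|<R_0+1$; set $s_1:=\sup\{s\in[t_1,t^*]:|q(s)|\leq R_0+1\}$ and $s_2:=t^*$. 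By continuity of $q$ one has $|q(s_1)|=R_0+1$, and $|q(t)|>R_0+1$ for $t\in(s_1,s_2)$, so $V(q(t))-c\geq\mu-c>0$ on $(s_1,s_2)$. Remark~\ref{rem:disequazioneF} then yields
\[
J_c(q)\;\geq\; J_{c,(s_1,s_2)}(q)\;\geq\;\sqrt{2(\mu-c)}\,|q(s_2)-q(s_1)|\;\geq\;\sqrt{2(\mu-c)}\,(M-\epsilon-R_0-1),
\]
and letting $\epsilon\to 0$ gives the claimed bound.

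Setting $R:=R_0+1+(m_c+1)/\sqrt{2(\mu-c)}$, any minimizing sequence $(q_n)\subset\Gamma_c$ satisfies $J_c(q_n)\leq m_c+1$ for $n$ large, so $\|q_n\|_{L^\infty(\R,\R^N)}\leq R$ by the a priori bound. Hence $\inf\{J_c(q):q\in\Gamma_c,\,\|q\|_{L^\infty(\R,\R^N)}\leq R\}\leq m_c$, and since the reverse inequality is obvious, equality holds and~\eqref{eqn:coercivita} is verified. Theorem~\ref{P:main1} then provides the desired solution $q_c\in C^2(\R,\R^N)$ with energy $-c$.

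The only genuinely nonroutine step is the a priori bound, which combines the "action controls displacement" inequality of Remark~\ref{rem:disequazioneF} with the uniform positive lower bound on $V-c$ outside $B_{R_0}$ furnished by the growth condition~\ref{enum:doublewell:V2}; this bound is what prevents a finite-energy trajectory from escaping to infinity and thus unlocks the general existence theorem.
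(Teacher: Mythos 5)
Your proposal is correct and follows essentially the same strategy as the paper. Both arguments rest on the same two ingredients: from (V2) and $c<\nu_0$ you extract $R_0$ and a margin $\mu-c>0$ so that $V\geq\mu>c$ outside $B_{R_0}(0)$, and then Remark~\ref{rem:disequazioneF} converts that lower bound on $V-c$ into the estimate $J_c\geq\sqrt{2(\mu-c)}\cdot(\text{displacement})$ over any excursion outside $B_{R_0}(0)$. The paper phrases this as a contradiction (a minimizing sequence with $\|q_n\|_{L^\infty}\to+\infty$ forces $m_c\geq\sqrt{2\mu_0}(R-R_0)$ for all $R$, contradicting $m_c<+\infty$ from Lemma~\ref{lem:m>0}), while you derive the explicit a priori bound $\|q\|_{L^\infty}\leq R_0+1+J_c(q)/\sqrt{2(\mu-c)}$ and deduce~\eqref{eqn:coercivita} directly with $R=R_0+1+(m_c+1)/\sqrt{2(\mu-c)}$. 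This is the same estimate repackaged; your version is arguably cleaner in isolating the uniform constant, but there is no substantive difference in the route taken.
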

\begin{proof}[{\bf Proof of Proposition~\ref{p:twowell}}] 
 In order to prove that~\eqref{eqn:coercivita} holds true for $c\in [0,\nu_{0})$ {for which ~\ref{enum:Vc} holds}, we show that there exists $R>0$ such that any minimizing sequence $(q_{n})\subset \Gamma_c$, $J_{c}(q_{n})\to m_{c}= \inf_{\Gamma_{c}}J_{c}(q)$, verifies the uniform bound $\|q_{n}\|_{L^{\infty}(\R,\R^N)}\leq R$.
Arguing by contradiction, let us assume that there is a value $c_*\in [0,\nu_{0})$ for which $({\bf V}^{c_*})$ holds true and that there exists a sequence $(q_{n})\subset \Gamma_{c_*}$ for which $J_{c_*}(q_{n})\to m_{c_*}$ but $\|q_{n}\|_{L^{\infty}(\R,\R^N)}\to +\infty$. \\
Since $c_*<\nu_{0}$ and $\liminf_{|x|\to+\infty}V(x)=\nu_{0}$, we have that, denoting $\mu_{0} :=\frac12(\nu_{0}-c_*)$, there exists $R_{0}>0$ such that
\begin{equation}\label{eq:R0}
V(x)>c_*+\mu_{0}\;\hbox{  if }\;|x|\geq R_{0}.
\end{equation}
Consequently, $\mathcal{V}^{c_*}=\mathcal{V}^{c_*}_{-}\cup \mathcal{V}^{c_*}_{+}\subset  B_{R_{0}}(0)$.  Since $(q_{n})\subset \Gamma_{c_*}$ with $\|q_{n}\|_{L^{\infty}(\R,\R^N)}\to +\infty$, we deduce that for any $R>R_{0}$ there exists $\bar n\in\N$ such that if $n\geq \bar n$ then $q_{n}$ crosses (at least two times) the annulus $B_{R}(0)\setminus B_{R_{0}}(0)$. In particular, by~\eqref{eq:R0}, we obtain that for any $n\geq \bar n$ there is an interval $(s_{n},t_{n})\subset\R$ such that
\[ 
|q_{n}(t_{n})-q_{n}(s_{n})|\geq R-R_{0}\;\,\hbox{ and }\;\, V(q_{n}(t))\geq c_*+\mu_{0}\,\hbox{ for }\,t\in (s_{n},t_{n}).
\]
By Remark \ref{rem:disequazioneF} we then recover that for $n\geq \bar n$
\[
m_{c_*}+o(1)=J_{c_*}(q_{n})\geq \sqrt{2 \mu_{0}}(R-R_{0}).
\]
Since $R$ is arbitrary, the latter contradicts the finiteness of $m_{c_*}$. Indeed, by Lemma \ref{lem:m>0} we know that $m_{c_*}<+\infty$ (in particular, the proof of Lemma \ref{lem:m>0} shows that this does not depend on  \eqref{eqn:coercivita}). 
\end{proof}

\begin{Remark}\label{rem:cbar}
The continuity of $V$ and the assumptions~\ref{enum:doublewell:V1} and~\ref{enum:doublewell:V2} imply that there always exists $\cdwell\in (0,\nu_{0})$ for which the condition~\ref{enum:Vc} is satisfied for every $c$ in the interval $[0,{\cdwell})$. Indeed, as seen in~\eqref{eq:R0}, from~\ref{enum:doublewell:V2} we obtain $\exists R_0>0$ large so that $\mathcal{V}^c\subset B_{R_0}(0)$ for any $c\in(0,\nu_{0})$. Hence, $\mathcal{V}^c$ is compact. Also, in view of~\ref{enum:doublewell:V1} and $V\in C(\R^N)$, {we can choose} $\cdwell$ sufficiently small so that $\mathcal{V}^c$ splits in the disjoint union of two compact sets $\mathcal{V}^{c}_-$ and $\mathcal{V}^{c}_+$, for any $c\in [0,\cdwell)$. In particular,
we can assume that for $\varrho_{0}:=\frac14|a_{-}-a_{+}|$ the condition below holds
\begin{equation}\label{eq:Vpiumeno}
a_{-}\in\mathcal{V}^{c}_-\subset B_{\varrho_{0}}(a_{-})\;\hbox{ and }\;a_{+}\in\mathcal{V}^{c}_+\subset B_{\varrho_{0}}(a_{+}),\;\text{ for all }\,c\in[0,\cdwell),
\end{equation}
by reducing the value of $\cdwell\in (0,\nu_{0})$, if necessary. It follows that $\dist(\mathcal{V}^{c}_-,\mathcal{V}^{c}_+)\geq \frac12|a_{-}-a_{+}|>0$.
\end{Remark}
For any $c\in [0,\nu_{0})$ for which~\ref{enum:Vc} is satisfied, Proposition \ref{p:twowell} provides a solution $q_{c}$ with energy $-c$ which connects $\mathcal{V}^{c}_{-}$ with $\mathcal{V}^{c}_{+}$. 
{As noted in Remark~\ref{rem:regular}}, such a solution is of brake orbit type when $c$ is a regular value for $V$, while it may be of the homoclinic or heteroclinic type if $c$ is a critical value of $V$. Of particular interest is the case when $c=0$, where we see that $\mathcal{V}^{c}_{-}=\{a_{-}\}$ with $\mathcal{V}^{c}_{+}=\{a_{+}\}$ (or viceversa) {and }since $a_{\pm}$ are critical points of $V$, we are in the case (c) of Theorem~\ref{P:main1}. {Therefore} the solution given by Proposition \ref{p:twowell} {for $c=0$} is of heteroclinic type connecting the equilibria $a_{-}$ and $a_{+}$.

\smallskip

We continue our analysis by studying the behavior of the solution $q_c$ given by {Proposition \ref{p:twowell}} as $c\to 0^+$ and we will prove that they converge, in a suitable sense, to a heteroclinic solution connecting the equilibria $a_{\pm}$. Precisely, we have

\begin{prop}\label{p:convergence} Assume that $V\in C^{1}(\R^{N})$ satisfies~\ref{enum:doublewell:V1}-\ref{enum:doublewell:V2} and that $\nabla V$ is locally Lipschitz continuous in $\R^{N}$. Let $c_{n}\to 0^+$ be any sequence and $(q_{c_{n}})$ be the sequence of solutions to the system~\eqref{eqn:gradSystem} given by Proposition \ref{p:twowell}. Then, up to translations and a subsequence, $q_{c_{n}}\to q_{0}$ in $C^{2}_{loc}(\R,\R^{N})$, where $q_{0}$ is a solution to~\eqref{eqn:gradSystem} of heteroclinic type between $a_{-}$ and $a_{+}$, i.e. it satisfies
\[
q_0(t)\to a_\pm\;\text{ and }\;\;\dot q_0(t)\to 0,\;\,\text{ as }\;t\to\pm\infty.
\]
\end{prop}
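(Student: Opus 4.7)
The plan is to extract, after appropriate time-translations, a $C^{2}_{loc}(\R,\R^{N})$-limit $q_{0}$ of the sequence $(q_{c_{n}})$, pass to the limit in the equation and in the pointwise energy identity, and identify $q_{0}$ as a heteroclinic from $a_{-}$ to $a_{+}$. For the uniform bounds, Proposition~\ref{p:twowell} gives $\|q_{c_{n}}\|_{L^{\infty}(\R,\R^N)}\le R$ uniformly in $n$; the pointwise energy identity $\tfrac12|\dot q_{c_{n}}|^{2}=V(q_{c_{n}})+c_{n}$ together with continuity of $V$ on $\overline{B_{R}(0)}$ bounds $\|\dot q_{c_{n}}\|_{L^{\infty}}$ uniformly, and local Lipschitz continuity of $\nabla V$ combined with $\ddot q_{c_{n}}=\nabla V(q_{c_{n}})$ bounds $\|\ddot q_{c_{n}}\|_{L^{\infty}}$, so that $(q_{c_{n}})$ is equibounded in $C^{2}(\R,\R^{N})$. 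By Remark~\ref{rem:cbar}, for $n$ large $\mathcal{V}^{c_{n}}_{\pm}\subset B_{\varrho_{0}}(a_{\pm})$ with $\varrho_{0}=\tfrac14|a_{+}-a_{-}|$; using continuity of $q_{c_{n}}$ and~\eqref{eqn:condbrake}, I pick $\tau_{n}$ as the first outward crossing of the midpoint sphere $\{|x-a_{-}|=\tfrac12|a_{+}-a_{-}|\}$, so that $|q_{c_{n}}(t)-a_{-}|<\tfrac12|a_{+}-a_{-}|$ for $t\in(\alpha_{q_{c_{n}}},\tau_{n})$. Applying Arzelà-Ascoli to $\tilde q_{n}(t):=q_{c_{n}}(t+\tau_{n})$ produces a subsequence converging in $C^{2}_{loc}(\R,\R^{N})$ to some $q_{0}\in C^{2}(\R,\R^{N})$, and passing to the limit in $\ddot{\tilde q}_{n}=\nabla V(\tilde q_{n})$ and in the energy identity yields $\ddot q_{0}=\nabla V(q_{0})$ and $\tfrac12|\dot q_{0}|^{2}=V(q_{0})$ on $\R$, with $|q_{0}(0)-a_{-}|=\tfrac12|a_{+}-a_{-}|$, so in particular $q_{0}\not\equiv a_{\pm}$.

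\textbf{Finite action and asymptotic behavior.} A linear-interpolation competitor between $a_{-}$ and $a_{+}$ (as in the proof of Lemma~\ref{lem:m>0}, with $\xi_{\pm}=a_{\pm}$) bounds $m_{c_{n}}\le C_{0}$ uniformly, so Lemma~\ref{lem:sol-pohoz} gives $\int_{-T}^{T}\tfrac12|\dot{\tilde q}_{n}|^{2}\,dt\le m_{c_{n}}/2\le C_{0}/2$ for every $T$ with $[-T,T]\subset(\alpha_{\tilde q_{n}},\omega_{\tilde q_{n}})$. I then check $\omega_{\tilde q_{n}}\to+\infty$ and $\alpha_{\tilde q_{n}}\to-\infty$: if $\omega_{\tilde q_{n}}$ stayed bounded along a subsequence, then by Lemma~\ref{lem:sol-W-alphabeta} and Lemma~\ref{lem:brake} $q_{0}$ would reach $a_{+}$ with zero velocity at finite time, and uniqueness for the Cauchy problem would force $q_{0}\equiv a_{+}$, contradicting $|q_{0}(0)-a_{-}|=\tfrac12|a_{+}-a_{-}|$. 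Fatou's lemma and the energy identity then give $\int_{\R}V(q_{0})\,dt=\int_{\R}\tfrac12|\dot q_{0}|^{2}\,dt\le C_{0}/2$. Since $V\circ q_{0}$ is Lipschitz (both $\dot q_{0}$ and $\nabla V(q_{0})$ are bounded on the bounded trajectory) and non-negative, integrability forces $V(q_{0}(t))\to 0$ as $|t|\to\infty$, whence $\dist(q_{0}(t),\{a_{-},a_{+}\})\to 0$ and $\dot q_{0}(t)\to 0$; by continuity of $q_{0}$ and isolatedness of the wells, $\lim_{t\to\pm\infty}q_{0}(t)$ both exist and lie in $\{a_{-},a_{+}\}$.

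\textbf{Main obstacle.} The decisive step is to show that these two limits are the \emph{distinct} wells, thereby excluding $q_{0}$ from being a homoclinic. The backward limit comes directly from the choice of $\tau_{n}$: the inequality $|\tilde q_{n}(t)-a_{-}|<\tfrac12|a_{+}-a_{-}|$ for $t\in(\alpha_{\tilde q_{n}},0)$ passes to the limit and forces $|q_{0}(t)-a_{-}|\le\tfrac12|a_{+}-a_{-}|$ for all $t\le 0$, so $\lim_{t\to-\infty}q_{0}(t)=a_{-}$ (the only candidate in $\{a_{-},a_{+}\}$ satisfying this). For the forward limit I would use the following observation: on the ``corridor'' $\mathcal{C}:=\{x\in B_{R}(0):|x-a_{-}|\ge\varrho_{0}\;\text{and}\;|x-a_{+}|\ge\varrho_{0}\}$ the potential is bounded below by some $\delta_{0}>0$, so by energy conservation $|\dot{\tilde q}_{n}|\ge\sqrt{2\delta_{0}}$ throughout $\mathcal{C}$, and any single excursion through $\mathcal{C}$ takes time at most $T_{*}:=\diam(\mathcal{C})/\sqrt{2\delta_{0}}$. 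Combining this with a cut-and-paste argument exploiting the minimality of $q_{c_{n}}$ (in the spirit of Lemma~\ref{lem:sol-minimoalphaomega}) to rule out wasteful multiple corridor-crossings, $\tilde q_{n}$ must enter $B_{\varrho_{0}}(a_{+})$ at some time $t_{n}\in(0,T_{*}]$; passing to a subsequence $t_{n}\to t_{*}\in[0,T_{*}]$ yields $q_{0}(t_{*})\in\overline{B_{\varrho_{0}}(a_{+})}$, which together with the asymptotic analysis above forces $\lim_{t\to+\infty}q_{0}(t)=a_{+}$. Making this cut-and-paste step rigorous is the main technical difficulty of the proof.
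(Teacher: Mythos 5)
Your compactness machinery is sound and matches the paper's: uniform $C^{2}$ bounds from the energy identity and the equation, Arzel\`a--Ascoli, passage to the limit in $\ddot q=\nabla V(q)$ and in $E_{q}=-c_{n}$, the Cauchy-uniqueness argument forcing $\alpha_{n}\to-\infty$, $\omega_{n}\to+\infty$, and the Fatou/integrability argument giving $V(q_{0}(t))\to 0$ and hence convergence to one of the wells at each end. Your first-crossing normalization is a genuinely nice touch: it yields the backward trapping $|q_{0}(t)-a_{-}|\le\tfrac12|a_{+}-a_{-}|$ for $t\le 0$ by definition, and hence $q_{0}(t)\to a_{-}$ as $t\to-\infty$ for free, whereas the paper normalizes by $\dist(q_{c_n}(0),\{a_{-},a_{+}\})=\varrho_{0}$ and must prove trapping on both sides. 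Two smaller imprecisions: the uniform $L^{\infty}$ bound is not literally ``given by Proposition~\ref{p:twowell}'' (the $R$ there is not asserted to be independent of $c$); you need Lemma~\ref{lem:unif}, which does follow from your uniform action bound $m_{c_{n}}\le C_{0}$ by the annulus-crossing argument, but should be said. Also the energy identity is $\tfrac12|\dot q|^{2}=V(q)-c_{n}$, not $+c_{n}$.

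The genuine gap is the forward limit, exactly the step you yourself flag as the main difficulty, and it cannot be waved through. First, the one quantitative claim you make there is wrongly justified: a lower bound $|\dot{\tilde q}_{n}|\ge\sqrt{2\delta_{0}}$ on the corridor $\mathcal{C}$ bounds the arc length of an excursion from below, not its duration from above; the trajectory can loop inside $\mathcal{C}$ at high speed for an arbitrarily long time. The correct time bound comes from the action, as in Lemma~\ref{lem:concentrazione}: $(t-s)(\delta_{0}-c_{n})\le\int_{s}^{t}(V(q_{c_n})-c_{n})\,dt\le m_{c_{n}}\le C_{0}$. Second, and more seriously, even granting that $\tilde q_{n}$ enters $B_{\varrho_{0}}(a_{+})$ at a bounded time $t_{n}\to t_{*}$, so that $q_{0}(t_{*})\in\overline{B_{\varrho_{0}}(a_{+})}$, this does not force $\lim_{t\to+\infty}q_{0}(t)=a_{+}$: your asymptotic analysis only shows the forward limit lies in $\{a_{-},a_{+}\}$, and nothing you have established excludes a homoclinic to $a_{-}$ that visits $B_{\varrho_{0}}(a_{+})$ at the single time $t_{*}$ and then returns. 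What is needed is a statement uniform in $t$, namely the trapping estimate \eqref{eq:trappingqn}: for every $r$ and all large $n$, $|q_{c_n}(t)-a_{+}|<r$ for \emph{all} $t\in(L_{r}^{+},\omega_{n})$. Its proof is the technical core of the paper's argument: assuming it fails along $s_{n}$, the uniform action bound produces times $t_{n}$ with $V(q_{c_n}(t_{n}))\to0$, and then two cut-and-paste competitors (truncating $q_{c_n}$ at a nearby point of $\mathcal{V}^{c_{n}}_{\pm}$) show that the action of $q_{c_n}$ on one side of $t_{n}$ tends to zero, which contradicts the lower bound of Remark~\ref{rem:disequazioneF} for the annulus crossing that must still occur on that side. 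Until you carry out this minimality argument (or an equivalent one), the proof is incomplete at its decisive step.
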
}

\smallskip
{ To prove Proposition \ref{p:convergence}, we begin by establishing in the next lemma a uniform estimate of the $L^{\infty}$-norm of the solutions $(q_{c})$ for $0\leq c<\cdwell$, where $\cdwell$ is given in Remark~\ref{rem:cbar}.}

\begin{Lemma}\label{lem:unif} Assume that $V\in C^{1}(\R^{N})$ satisfies~\ref{enum:doublewell:V1} and~\ref{enum:doublewell:V2} and let  $\cdwell\in(0,\nu_{0})$.
Then there exists $\Rdwell>0$ such that $\|q_{c}\|_{L^{\infty}(\R,\R^{N})}\leq\Rdwell$ for any $c\in[0,\cdwell)$.
\end{Lemma}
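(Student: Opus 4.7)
The strategy is to make the contradiction argument in the proof of Proposition~\ref{p:twowell} quantitative and uniform for $c\in[0,\cdwell)$. By Remark~\ref{R:min}\ref{enum:aposteriori:5} one has $\|q_{c}\|_{L^{\infty}(\R,\R^{N})}=\|q_{c}\|_{L^{\infty}((\alpha_{q_{c}},\omega_{q_{c}}),\R^{N})}$ with $J_{c,(\alpha_{q_{c}},\omega_{q_{c}})}(q_{c})=m_{c}$, so it suffices to bound $\|q_{c}\|_{L^{\infty}((\alpha_{q_{c}},\omega_{q_{c}}),\R^{N})}$ independently of $c$. Two ingredients must be uniformized: an exterior coercivity bound for $V$, and an upper bound for $m_{c}$.

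For the first, since $c\leq\cdwell<\nu_{0}=\liminf_{|x|\to+\infty}V(x)$, setting $\mu_{0}:=\tfrac12(\nu_{0}-\cdwell)>0$ furnishes $R_{0}>0$, independent of $c$, such that $V(x)\geq c+\mu_{0}$ whenever $|x|\geq R_{0}$; in particular $\mathcal{V}^{c}\subset B_{R_{0}}(0)$ for every $c\in[0,\cdwell)$.

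For the second, I construct an explicit competitor. Fix once and for all a smooth arclength-parametrized curve $\gamma:[0,L]\to\R^{N}$ joining $a_{-}$ to $a_{+}$, exiting $B_{\varrho_{0}}(a_{-})$ at a single point $\gamma(s_{-})$ and entering $B_{\varrho_{0}}(a_{+})$ at a single point $\gamma(s_{+})$, with middle arc $\gamma([s_{-},s_{+}])$ disjoint from both balls. On that compact middle arc $V$ is continuous and strictly positive (by \ref{enum:doublewell:V1}), hence bounded below by some $v_{*}>0$; replacing $\cdwell$ by $\min(\cdwell,v_{*})$ if needed, set
\[
\alpha_{c}:=\sup\{s\in[0,s_{-}]:V(\gamma(s))\leq c\},\quad \beta_{c}:=\inf\{s\in[s_{+},L]:V(\gamma(s))\leq c\},
\]
which by continuity satisfy $V(\gamma(\alpha_{c}))=V(\gamma(\beta_{c}))=c$ and $V\circ\gamma>c$ on $(\alpha_{c},\beta_{c})$. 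By connectedness and the splitting provided by Remark~\ref{rem:cbar}, $\gamma(\alpha_{c})\in\mathcal{V}^{c}_{-}$ and $\gamma(\beta_{c})\in\mathcal{V}^{c}_{+}$. The competitor
\[
\tilde q_{c}(t):=\begin{cases}\gamma(\alpha_{c}),& t\leq 0,\\ \gamma(\alpha_{c}+t),& 0<t<\beta_{c}-\alpha_{c},\\ \gamma(\beta_{c}),& t\geq \beta_{c}-\alpha_{c},\end{cases}
\]
then belongs to $\Gamma_{c}$ (the constant tails contribute nothing to the action because $V=c$ there), and
\[
m_{c}\leq J_{c}(\tilde q_{c})\leq \tfrac{L}{2}+L\,\max_{[0,L]}V\circ\gamma=:M_{*},
\]
with $M_{*}$ independent of $c$.

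Combining, suppose $\|q_{c}\|_{L^{\infty}(\R,\R^{N})}\geq R>R_{0}$. On its connecting interval $q_{c}$ begins and ends inside $\overline{B_{R_{0}}(0)}$ (by Lemma~\ref{lem:sol-W-alphabeta} if the corresponding endpoint is finite, and by Lemma~\ref{lem:omegalimite} together with $\mathcal{V}^{c}\subset B_{R_{0}}(0)$ otherwise), so $q_{c}$ must traverse the annulus $B_{R}(0)\setminus\overline{B_{R_{0}}(0)}$ where $V\geq c+\mu_{0}$; Remark~\ref{rem:disequazioneF} applied with $\mu=\mu_{0}$ then gives $m_{c}\geq \sqrt{2\mu_{0}}(R-R_{0})$, and combined with $m_{c}\leq M_{*}$ this yields
\[
\|q_{c}\|_{L^{\infty}(\R,\R^{N})}\leq R_{0}+\frac{M_{*}}{\sqrt{2\mu_{0}}}=:\Rdwell,
\]
uniformly in $c\in[0,\cdwell)$. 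The delicate step is the uniform upper bound on $m_{c}$, and in particular arranging that the boundary times $\alpha_{c},\beta_{c}$ deliver endpoints in the correct components of $\mathcal{V}^{c}$; this is exactly where the disjoint-ball structure of $\mathcal{V}^{c}_{\pm}$ from Remark~\ref{rem:cbar} is used.
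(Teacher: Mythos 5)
Your proof is correct and follows essentially the same route as the paper's: a uniform upper bound on $m_c$ via an explicit competitor running from $\mathcal{V}^{c}_-$ to $\mathcal{V}^{c}_+$ along a fixed curve joining $a_-$ to $a_+$, the uniform exterior bound $V\geq c+\mu_0$ outside $B_{R_0}(0)$ coming from~\ref{enum:doublewell:V2}, and the annulus-crossing estimate of Remark~\ref{rem:disequazioneF} (which the paper runs as a contradiction argument rather than your direct quantitative bound; both are fine). The one deviation worth flagging is your replacement of $\cdwell$ by $\min(\cdwell,v_*)$: because you anchor $\alpha_c\in[0,s_-]$ and $\beta_c\in[s_+,L]$ to fixed exit/entry parameters of the balls, you need $V\circ\gamma>c$ on the whole middle arc, and this genuinely shrinks the range of admissible $c$, so as written you only prove the lemma for $c\in[0,\min(\cdwell,v_*))$ rather than for the given $\cdwell$. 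The paper avoids this by taking the straight segment $\xi(t)=(1-t)a_-+ta_+$ and defining the cut times directly relative to the sets, $\sigma_c=\sup\{t:\xi(t)\in\mathcal{V}^{c}_-\}$ and $\tau_c=\inf\{t>\sigma_c:\xi(t)\in\mathcal{V}^{c}_+\}$, so that $V\circ\xi>c$ on $(\sigma_c,\tau_c)$ is automatic from $\mathcal{V}^{c}=\mathcal{V}^{c}_-\cup\mathcal{V}^{c}_+$ and~\eqref{eq:Vpiumeno}; the same device applied to your curve $\gamma$ removes the restriction. Since the lemma is only invoked along sequences $c_n\to 0^+$, this is a cosmetic rather than fatal issue.
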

\begin{proof} [{\bf Proof of Lemma~\ref{lem:unif}}] First we claim that there exists $\Mdwell>0$ such that
\begin{equation}\label{eq:mc}
m_{c}=\inf_{q\in \Gamma_{c}}J_c(q)\leq \Mdwell\;\hbox{ for any }\;c\in[0,\cdwell).
\end{equation}
Indeed, consider the function $\xi(t) = (1 - t)a_{-} + ta_{+}$ for $t\in [0,1]$. From~\eqref{eq:Vpiumeno} and the compactness of $\mathcal{V}^{c}_{\pm}$ we see that for each $c\in[0,\cdwell)$ there exist $0\leq\sigma_{c}<\tau_{c}\leq 1$  that satisfy 
\[\xi(\sigma_{c})\in \mathcal{V}^{c}_-,\; \xi(\tau_{c})\in\mathcal{V}^{c}_+\;\hbox{ and }\;V(\xi(t))>c\;\hbox{ for any }\,t\in
(\sigma_{c},\tau_{c}).\]
Then, the function 
\[q_{c,\xi}(t)=\begin{cases}\xi(\sigma_{c})&\text{if }\, t\leq \sigma_{c},\\
\xi(t)&\text{if }\,\sigma_{c}<t<\tau_{c},\\
\xi(\tau_{c})&\text{if }\, \tau_{c}\leq t.\end{cases}
\]
belongs to $\Gamma_{c}$, and so  our claim~\eqref{eq:mc} follows by a plain estimate:
\[m_{c}\leq J_{c}(q_{c,\xi})\leq \tfrac 12|a_{+}-a_{-}|^{2}+\max_{s\in [0,1]}V(\xi(s))=: \Mdwell.\]
If $q_{c}$ denotes the solution in~Proposition~\ref{p:twowell}, corresponding to a $c\in [0,\cdwell)$, by Remark \ref{R:min} there is a {\sl connecting time interval}
$(\alpha_{q_{c}},\omega_{q_{c}})\subset\R$
for which properties~\ref{enum:aposteriori:1}-\ref{enum:aposteriori:5} hold true. In particular, by~\eqref{eq:mc}
\begin{equation}\label{eq:funz}
J_{c,(\alpha_{q_{c}},\omega_{q_{c}})}(q_{c})=m_{c}\leq \Mdwell\;\,\hbox{ for any }\,c\in [0,\cdwell),
\end{equation}
holds true. We now claim that there exists $\Rdwell>0$ in such a way that
\begin{equation}\label{eq:norm}
\|q_{c}\|_{L^{\infty}((\alpha_{q_{c}},\omega_{q_{c}}),\R^{N})}\leq\Rdwell\;\;\hbox{ for any }\,c\in [0,\cdwell).
\end{equation}
The proof of Lemma~\ref{lem:unif} will be concluded upon establishing~\eqref{eq:norm}, since condition~\ref{enum:aposteriori:5} in Remark~\ref{R:min} gives $\|q_{c}\|_{L^{\infty}((\alpha_{q_{c}},\omega_{q_{c}}),\R^{N})}=\|q_{c}\|_{L^{\infty}(\R,\R^{N})}$. To prove \eqref{eq:norm}, arguing by contradiction, we assume that for every $R>0$ there exists $c_R\in [0,\cdwell)$ such that 
\begin{equation}\label{eq:contradassumption}
\|q_{c_R}\|_{L^{\infty}((\alpha_{q_{c_R}},\omega_{q_{c_R}}),\R^{N})}>R. 
\end{equation}
Let us observe that assumption~\ref{enum:doublewell:V2} ensures that for $h_0:=\frac12(\nu_{0}-\cdwell)$ there exists $R_{0}>0$ for which 
\begin{equation*}\label{eq:newR0}
V(x)>\cdwell+h_{0}\;\hbox{  if }\;|x|\geq R_{0}.
\end{equation*}
In particular, {this} shows that $\mathcal{V}^{c}=\mathcal{V}^{c}_{-}\cup \mathcal{V}^{c}_{+}\subset B_{R_{0}}(0)$ for any $c\in [0,\cdwell)$. In the remaining of the proof we choose $R\in(R_{0},+\infty)$ to satisfy $\sqrt{2 h_{0}}(R-R_{0})>\Mdwell$. For this choice of $R$, the contradiction assumption~\eqref{eq:contradassumption} implies that the trajectory of $q_{c_R}\in\mathcal{M}_{c_R}$ crosses the annulus $B_{R}(0)\setminus B_{R_{0}}(0)$; thus, there is an interval $[\sigma,\tau]\subset (\alpha_{q_{c_R}},\omega_{q_{c_R}})$ in such a way that
\[ 
|q_{c_R}(\tau)-q_{c_R}(\sigma)|\geq R-R_{0}\;\;\hbox{ and }\;\;V(q_{c_R}(t))\geq \cdwell+h_{0}\,\hbox{ for any }\,t\in (\sigma,\tau).
\]
Hence, Remark~\ref{rem:disequazioneF} along with the properties above, show the strict lower bound on the energy of $q_{c_R}$
\[
m_{c_R}=J_{c_R,(\alpha_{q_{c_R}},\omega_{q_{c_R}})}(q_{c_R})\geq \sqrt{2 h_{0}}(R-R_{0})>\Mdwell.
\]
However, this contradicts the upper bound~\eqref{eq:funz}. In this way, we have argued that~\eqref{eq:norm} follows, which in turn completes the proof of this lemma.
\end{proof}

{We can now prove Proposition~\ref{p:convergence}}. Without loss of generality, let the sequence $c_{n}\to 0^+$ be so that $(c_{n})\subset (0,\cdwell)$. Since $c_{n}<\cdwell$, we know from Remark~\ref{rem:cbar} that $(\mathbf{V}^{c_n})$ holds true for all $n\in\N$. By Remark~\ref{R:min}, for each $n\in\N$, the solution $q_{c_{n}}$ given by  Proposition \ref{p:twowell} has a {\sl connecting time interval}
$(\alpha_{n},\omega_{n})\subset\R$, with $-\infty\leq\alpha_{n}<\omega_{n}\leq +\infty$, in such a way that
\begin{enumerate}[label={\rm$(\arabic*_n)$},itemsep=3pt, leftmargin=2.5em]
	\item\label{enum:1n} $V(q_{c_n}(t))>c_{n}$ for every $t\in (\alpha_{n},\omega_{n})$,
	\item\label{enum:2n} $\lim\limits_{t\to\alpha_{n}^{+}}\dist(q_{c_n}(t),\mathcal{V}^{c_n}_-)=0$, and if $\alpha_{n}>-\infty$ then $\dot q_{c_n}(\alpha_{n})=0$, $V(q_{c_n}(\alpha_{n}))=c_{n}$ with $q_{c_n}(\alpha_{n})\in\mathcal{V}^{c_{n}}_-$,
	 \item\label{enum:3n} $\lim\limits_{t\to\omega_{n}^{-}}\dist(q_{c_n}(t),\mathcal{V}^{c_{n}}_+)=0$, and if $\omega_{n}<+\infty$ then $\dot q_{c_n}(\omega_{n})=0$, $V(q_{c_n}(\omega_{n}))= c_{n}$ with $q_{c_n}(\omega_{n})\in\mathcal{V}^{c_{n}}_+$.
	 \item\label{enum:4n}  $J_{c_{n},(\alpha_{n},\omega_{n})}(q_{c_n})=m_{c_n}=\inf\limits_{q\in\Gamma_{c_{n}}}J_{c_{n}}(q)$. 
\end{enumerate}

We first start by renormalizing the sequence $(q_{c_n})$ using the following phase shift procedure. In light of the properties~\ref{enum:2n} and~\ref{enum:3n}, for any $n\in\N$ there exists $\zeta_{n}\in (\alpha_{n},\omega_{n})$ such that
\[
\dist(q_{c_n}(\zeta_{n}),\{a_{-},a_{+}\})=\varrho_{0},
\]
for $\varrho_{0}:=\frac14|a_--a_+|$ as in~\eqref{eq:Vpiumeno}. {Hence,} up to translations, eventually renaming $q_{c_n}$ to be $q_{c_n}(\cdot-\zeta_{n})$,  we can assume
\begin{equation}\label{eq:trasl}
	\alpha_{n}<0<\omega_{n}\;\hbox{ and }\;\dist(q_{c_n}(0),\{a_{-},a_{+}\})=\varrho_{0},\;\hbox{ for any }n\in\N.
\end{equation}
We now argue that $(q_{c_n})$ converges, in the $C^2$-topology on compact sets, to an entire solution $q_0$ of the system $\ddot q=\nabla V(q)$ over $\R$. To see this, let us observe that $(q_{c_n})$, $(\dot q_{c_n})$ and $(\ddot q_{c_n})$ are uniformly bounded in $L^{\infty}(\R,\R^{N})$, given the fact that $\|q_{c_n}\|_{L^{\infty}(\R,\R^{N})}\leq \Rdwell$ for any $n\in\N$ (by Lemma~\ref{lem:unif}). More precisely, for every $n\in\N$ we have the bounds
\begin{equation*}\label{eq:normqn}
\begin{array}{l}
\|\dot q_{c_n}\|_{L^{\infty}(\R,\R^{N})}\leq C_{dw}:=(2\max\{V(x):|x|\leq\Rdwell\})^{1/2}<+\infty,\;\text{ and}\\[.5em]
\|\ddot q_{c_n}\|_{L^{\infty}(\R,\R^{N})}\leq C'_{dw}:=\max\{|\nabla V(x)|:|x|\leq\Rdwell\}<+\infty,
\end{array}
\end{equation*}
where the former is a consequence of the pointwise energy constraint $E_{q_{c_n}}(t)=-c_{n}$ for all $t\in\R$, and the latter follows since $\ddot q_{c_n}=\nabla V(q_{c_n})$ on $\R$. 

An application of the Ascoli-Arzel\`a Theorem shows that there exists $q_{0}\in C^{1}(\R,\R^{N})$ and a subsequence of $(q_{c_n})$, still denoted $(q_{c_n})$, such that 
\[
q_{c_n}\to q_{0}\;\hbox{ in }\;C_{loc}^{1}(\R,\R^{N}),\;\hbox{ as }\;n\to +\infty.
\]
In particular,~\eqref{eq:trasl} implies that
\begin{equation}\label{eq:distq0}
\dist(q_0(0),\{a_{-},a_{+}\})=\varrho_{0}.
\end{equation}
Moreover, the above convergence can be improved to $q_{c_n}\to q_{0}$ in $C_{loc}^{2}(\R,\R^{N})$ as $n\to +\infty$, by using once more that $\ddot q_{c_n}=\nabla V(q_{c_n})$ on $\R$, for any $n\in\N$. In fact, the latter convergence shows, in turn, that 
\begin{equation}\label{eq:eqq0} 
\ddot q_{0}=\nabla V(q_{0})\;\hbox{ on }\;\R.
\end{equation}
To conclude the proof of Proposition~\ref{p:convergence}, it will be enough to establish
\begin{equation}\label{eq:q0hetero}
q_{0}(t)\to a_{\pm}\;\hbox{ as }\;t\to \pm\infty.
\end{equation}  
Indeed, once the validity of~\eqref{eq:q0hetero} has been proved, we then get $\ddot q(t)\to 0$ as $t\to \pm\infty$ from~\eqref{eq:eqq0}, which in turn shows that $\dot q(t)\to 0$ as $t\to \pm\infty$ by interpolation inequalities. \smallskip

To prove \eqref{eq:q0hetero}, let us first note that conditions \eqref{eq:trasl},~\eqref{eq:distq0} and~\eqref{eq:eqq0} imply altogether
\begin{equation}\label{eq:infty}
\alpha_{n}\to -\infty\;\hbox{ and }\;\omega_{n}\to+\infty,\;\hbox{ as }n\to+\infty.
\end{equation}
Indeed, arguing by contradiction, assume that along a subsequence $\alpha_{n}$ is bounded. As $\alpha_n<0$ then, up to a subsequence, we deduce that $\alpha_{n}\to \alpha_{0}$ for some $\alpha_0\leq 0$. 
By ($2_{n}$) and \eqref{eq:Vpiumeno} we have $q_{c_n}(\alpha_{n})\in \mathcal{V}^{c_{n}}_{-}\subset B_{\varrho_{0}}(a_{-})$, which combined with $c_{n}\to 0$,~\ref{enum:doublewell:V1} and~\ref{enum:doublewell:V2} then yields $q_{c_n}(\alpha_{n})\to a_{-}$. This, $\dot q_{c_n}(\alpha_{n})=0$ for any $n\in\N$, and the fact that $q_{c_n}\to q_{0}$ in $C^{1}_{loc}(\R,\R^{N})$ allow us to conclude  $q_{0}(\alpha_{0})=a_{-}$ and $\dot q_{0}(\alpha_{0})=0$.
Nonetheless, the uniqueness of solutions to the Cauchy problem would then imply that $q_{0}(t)=a_{-}$ for any $t\in\R$. This is contrary to~\eqref{eq:distq0}, thus showing $\alpha_{n}\to-\infty$, as claimed. Analogously, one can prove that $\omega_{n}\to+\infty$. Therefore,~\eqref{eq:infty} follows.\smallskip

In order to establish \eqref{eq:q0hetero}, let us observe that it is sufficient to prove:
for any $r\in (0,\rho_{0})$ there exists $L_{r}^\pm>0$ and $n_{r}\geq\bar n$ such that for any $n\geq n_{r}$, there hold
\begin{equation}\label{eq:trappingqn} 
|q_{c_n}(t)-a_{-}|<r\;\hbox{ for }\;t\in (\alpha_{n},-L_{r}^-),\;\hbox{ and }\;\; |q_{c_n}(t)-a_{+}|<r\;\hbox{ for }\;t\in (L_{r}^+,\omega_{n}).\end{equation}

Indeed, by taking the limit as $n\to+\infty$ in \eqref{eq:trappingqn}, we then conclude in view of the pointwise convergence $q_{c_n}\to q_{0}$ and~\eqref{eq:infty},  that for any $r\in (0,\varrho_{0})$ there exists $L_{r}:=\max\{L^{-}_{r},L^{+}_{r}\}$ such that
\[ 
|q_{0}(t)-a_{-}|<r\,\hbox{ for }\,t\in (-\infty,-L_{r}),\;\hbox{ and }\;|q_{0}(t)-a_{+}|<r\,\hbox{ for }\,t\in (L_{r},+\infty),
\]
therefore,~\eqref{eq:q0hetero} follows.\smallskip

{To obtain the first estimate in \eqref{eq:trappingqn} we argue by contradiction assuming that there exists $\bar r\in (0,\rho_{0})$, a subsequence of $(q_{c_n})$, still denoted $(q_{c_n})$, and a sequence $s_{n}\to-\infty$ such that for any $n\in\N$
\begin{equation}\label{eq:sn} 
|q_{c_n}(s_{n})-a_{-}|>\bar r\;\text{ with }\;
s_{n}\in (\alpha_{n},0).
\end{equation}
Then, by observing that~\ref{enum:4n} and~\eqref{eq:funz} imply the inequality 
\[
\Mdwell\geq m_{c_{n}}=J_{c_{n},(\alpha_{n},\omega_{n})}(q_{c_n})\geq \int_{s_{n}}^{0}(V(q_{c_n}(t))-c_{n})\,dt\geq
|s_{n}|\inf_{t\in(s_{n},0)}(V(q_{c_n}(t))-c_{n}),
\]
we obtain that the contradiction hypothesis $s_{n}\to-\infty$ yields
\[
\inf_{t\in(s_{n},0)}V(q_{c_n}(t))\leq c_{n}+\tfrac{\Mdwell}{|s_{n}|}\to 0\;\;\hbox{ as }\;n\to+\infty.
\]
In particular, we deduce that for every $n\in\N$ there exists $t_{n}\in (s_{n},0)$ so that $V(q_{c_n}(t_{n}))\to 0\hbox{ as }n\to+\infty$. This, in light of~\ref{enum:doublewell:V1} and~\ref{enum:doublewell:V2}, shows 
\begin{equation}\label{eq:distamenoapiu*}
\dist(q_{c_n}(t_{n}),\{a_{-},a_{+}\})\to 0\;\hbox{ as }\;n\to +\infty.
\end{equation}
Let us show, if fact, that 
\begin{equation}\label{eq:toameno*}\liminf\limits_{n\to+\infty}\dist(q_{c_n}(t_{n}),a_{+})>0.
\end{equation}
Indeed, if \eqref{eq:toameno*} fails, along a subsequence (still denoted $q_{c_n}$) we have $q_{c_n}(t_{n})\to a_{+}$.
We connect the point $q_{c_n}(t_{n})$ with $a_{+}$ with the segment $\{(1-\sigma)q_{c_n}(t_{n})+\sigma a_{+}:\,
\sigma\in [0,1]\}$. By continuity, since $V(q_{c_n}(t_{n}))>c_{n}>0=V(a_{+})$, there is $\sigma_{n}\in (0,1)$ such that
\begin{equation}\label{eq:segment}V((1-\sigma_{n})q_{c_n}(t_{n})+\sigma_{n} a_{+})=c_{n}\;\,\hbox{ and }\;\,V((1-\sigma)q_{c_n}(t_{n})+\sigma a_{+})>c_{n}\,\hbox{ for any }\,\sigma\in [0,\sigma_{n}).\end{equation}
In particular $(1-\sigma_{n})q_{c_n}(t_{n})+\sigma_{n} a_{+}\in\mathcal{V}^{c_{n}}_{+}$ and the function
\[q_{-,n}(t)=\begin{cases} q_{c_n}(\alpha_{n})&\text{if }\,t<\alpha_{n},\\
q_{c_n}(t)&\text{if }\,\alpha_{n}\leq t< t_{n},\\
(1-(t-t_{n}))q_{c_n}(t_{n})+(t-t_{n})a_{+}&\text{if }\,t_{n}\leq t<t_{n}+\sigma_{n},\\
(1-\sigma_{n})q_{c_n}(t_{n})+\sigma_{n}a_{+}&\text{if }\,t_{n}+\sigma_{n}\leq t.\end{cases}\]
(where we agree to omit the first item in the definition  if $\alpha_{n}=-\infty$) is by construction an element of $\Gamma_{c_{n}}$  for any $n\in\N$, whence $J_{c_{n}}(q_{-,n})\geq m_{c_{n}}$. Since $q_{c_n}(t_{n})\to a_{+}$ we have $V((1-\sigma)q_{c_n}(t_{n})+\sigma a_{+})\to 0$ as $n\to +\infty$ uniformly for $\sigma\in [0,1]$. Thus, we derive that,
\[J_{c_{n},(t_{n},+\infty)}(q_{-,n})=J_{c_{n},(t_{n},t_{n}+\sigma_{n})}(q_{-,n})=\tfrac{\sigma_{n}}2|q_{c_n}(t_{n})-a_{+}|^{2}+\int_{t_{n}}^{t_{n}+\sigma_{n}}\bigl(V((1-\sigma)q_{c_n}(t_{n})+\sigma a_{+})-c_{n}\bigr)d\sigma\to 0,
\]
as $n\to+\infty$.
Since $q_{-,n}=q_{c_n}$ on $(\alpha_{n},t_{n})$ and since $q_{-,n}$ is constant outside $(\alpha_{n},t_{n}+\sigma_{n})$, the latter shows
\[J_{c_{n},(\alpha_{n},t_{n})}(q_{c_n})=J_{c_{n},(\alpha_{n},t_{n})}(q_{-,n})=
J_{c_{n}}(q_{-,n})-J_{c_{n},(t_{n},+\infty)}(q_{-,n})\geq m_{c_{n}}-o(1).\]
This bound with $(4_n)$ implies that
\[
m_{c_{n}}=J_{c_{n},(\alpha_{n},\omega_{n})}(q_{c_n})=J_{c_{n},(\alpha_{n},t_{n})}(q_{c_n})+J_{c_{n},(t_{n},\omega_{n})}(q_{c_n})\geq m_{c_{n}}-o(1)+J_{c_{n},(t_{n},\omega_{n})}(q_{c_n}),
\]
and so
\begin{equation}\label{eq:coda*}
J_{c_{n},(t_{n},\omega_{n})}(q_{c_n})\to 0\;\hbox{ as }\;n\to+\infty.
\end{equation}
On the other hand, since $q_{c_n}(t_{n})\to a_{+}$ and, by \eqref{eq:trasl}, $\dist(q_{c_n}(0),\{a_{-},a_{+}\})=\varrho_{0}$, when $n$ is large the trajectory of $q_{c_n}$ crosses the annulus $B_{\varrho_{0}/2}(a_{+})\setminus B_{\varrho_{0}/4}(a_{+})$ at least one in the interval $(t_{n},0)$, i.e., there exists $(t_{1,n},t_{2,n})\subset (t_{n},0)$ such that $|q_{c_n}(t_{2,n})-q_{c_n}(t_{1,n})|=\tfrac{\varrho_{0}}4$ and $\dist(q_{c_n}(t),\{a_{-},a_{+}\})\geq \tfrac{\varrho_{0}}4$ for any $t\in (t_{1,n},t_{2,n})$. If we set $\mu(\tfrac{\varrho_{0}}4):=\inf_{\R^{N}\setminus (B_{\varrho_{0}/4}(a_{-})\cup B_{\varrho_0/4}(a_{+}))}V$ it follows that for every $t\in (t_{1,n},t_{2,n})$, $V(q_{c_n}(t))\geq \mu(\tfrac{\varrho_{0}}4)$, and hence Remark \ref{rem:disequazioneF} yields the bound
\[J_{c_{n},(t_{n},\omega_{n})}(q_{c_n})\geq J_{c_{n},(t_{1,n},t_{2,n})}(q_{c_n})\geq\sqrt{2(\mu(\tfrac{\varrho_{0}}4)-c_{n})}|q_{c_n}(t_{2,n})-q_{c_n}(t_{1,n})|>\sqrt{\mu(\tfrac{\varrho_{0}}4)}\tfrac{\varrho_{0}}4\]
 for $n$ large enough. This last inequality contradicts \eqref{eq:coda*} proving \eqref{eq:toameno*}.\smallskip

By \eqref{eq:distamenoapiu*} and \eqref{eq:toameno*} we obtain $q_{c_n}(t_{n})\to a_{-}$. We now show that this case is not possible either, thus obtaining a contradiction with \eqref{eq:distamenoapiu*}. This will establish the first estimate of \eqref{eq:trappingqn}.\smallskip 

To prove that $q_{c_n}(t_{n})\to a_{-}$ cannot occur, we use an argument similar to the one used above. If $q_{c_n}(t_{n})\to a_{-}$, we fix $\sigma_{n}\in (0,1)$ such that
\begin{equation*}\label{eq:segmentdue}
V((1-\sigma_{n}) a_{-}+\sigma_{n} q_{c_n}(t_{n}))=c_{n}\;\;\hbox{ and }\;\;V((1-\sigma) a_{-}+\sigma q_{c_n}(t_{n}))>c_{n}\hbox{
for any }\sigma\in (\sigma_{n},1].
\end{equation*}
Then the function 
\[
q_{+,n}(t):=\begin{cases} (1-\sigma_{n}) a_{-}+\sigma_{n} q_{c_n}(t_{n})&\text{if }\,t\leq t_{n}-(1-\sigma_{n}),\\
(t_{n}-t) a_{-}+(t-t_{n}+1) q_{c_n}(t_{n})&\text{if }\,t_{n}-(1-\sigma_{n})<t<t_{n},\\
q_{c_n}(t)&\text{if }\,t_{n}<t\leq \omega_{n},\\
q_{c_n}(\omega_{n})&\text{if }\,\omega_{n}<t.
\end{cases}
\]
(where we omit the last item if $\omega_{n}=+\infty$) belongs to $\Gamma_{c_{n}}$ and $J_{c_{n}}(q_{+,n})\geq m_{c_{n}}$. Since $q_{c_n}(t_{n})\to a_{-}$, with an argument analogous to the one used for $q_{-,n}$, we obtain
\[
J_{c_{n},(-\infty,t_{n})}(q_{+,n})=J_{c_{n},(t_{n}-(1-\sigma_{n}),t_{n})}(q_{+,n})\to 0\;\hbox{ as }\;n\to+\infty.
\]
A reasoning similar to the one that lead to \eqref{eq:coda*}, then shows
\begin{equation}\label{eq:coda2}
J_{c_{n},(\alpha_{n},t_{n})}(q_{c_n})\to 0\;\hbox{ as }\;n\to+\infty.
\end{equation}
By \eqref{eq:sn} and \eqref{eq:toameno*}, we are now in the situation where
\[\alpha_{n}<s_{n}<t_{n}<0,\; |q_{c_n}(s_{n})-a_{-}|>\bar r,\,\hbox{ and }\;q_{c_n}(t_{n})\to  a_{-}\text{ as }\, n\to+\infty.\]
Then, when $n$ is large, the trajectory of $q_{c_n}$ crosses the annulus $B_{\bar r/2}(a_{-})\setminus B_{\bar r/4}(a_{-})$  in the interval $(s_{n},t_{n})$ and so there exists $(t_{3,n},t_{4,n})\subset (s_{n},t_{n})$ such that $|q_{c_n}(t_{3,n})-q_{c_n}(t_{4,n})|=\tfrac{\bar r}4$ and $\dist(q_{c_n}(t),\{a_{-},a_{+}\})\geq \tfrac{\bar r}4$ for $t\in (t_{3,n},t_{4,n})$. As above, since $V(q(t))\geq \mu(\tfrac{\bar r}4):=\inf_{\R^{N}\setminus (B_{\bar r/4}(a_{-})\cup B_{\bar r/4}(a_{+}))}V$ for any $t\in (t_{3,n},t_{4,n})$, we can use Remark \ref{rem:disequazioneF} to obtain
\[J_{c_{n},(\alpha_{n},t_{n})}(q_{c_n})\geq J_{c_{n},(t_{3,n},t_{4,n})}(q_{c_n})>\sqrt{\mu(\tfrac{\bar r}4)}\tfrac{\bar r}4\;\;\hbox{ for }n\hbox{ large}.\]
This contradicts \eqref{eq:coda2} and so the case $q(t_{n})\to a_{-}$ cannot occur either. Then the first estimate of \eqref{eq:trappingqn} follows.\\
One can readily verify that a strategy similar as the one above can be used to establish the second estimate of \eqref{eq:trappingqn}. In conclusion, \eqref{eq:trappingqn} follows, and as a consequence~\eqref{eq:q0hetero} has been established. The proof of Proposition~\ref{p:convergence} is now complete.}

\medskip

\subsection{Duffing like systems.} A second application of Theorem~\ref{P:main1} include {\sl Duffing like} systems. More precisely, we follow the assumptions made in~\cite{ambrosetti1992homoclinics} and~\cite{rabinowitz1991results}: let $V$ be a $C^{1}(\R^{N})$ potential satisfying
\begin{enumerate}[label={\rm(V\arabic*)},start=3,itemsep=2pt]
\item\label{enum:duffing:V3} $V$ has a strict local minimum at $x_{0}:=0$, with value $V(0)=0$:
\[
\exists r_0>0\hbox{ such that }V(x)>0\hbox{ for any }x\in \overline{B_{4r_0}(0)}\setminus\{0\};
\]
\item\label{enum:duffing:V4} The set $\mathcal{C}_{0}:=\{ x\in\R^{N}: V(x)> 0\}\cup \{ 0\}$ is bounded, and such that $\nabla V(x)\neq 0$ for any $x\in\partial\mathcal{C}_{0}$.
\end{enumerate}
\smallskip

We observe that {for any $c\ge 0$}, $J_{c}$ satisfies the coercivity property~\eqref{eqn:coercivita} on $\Gamma_{c}$. To see this, in view of~\ref{enum:duffing:V3} and~\ref{enum:duffing:V4}, note that {$\{ x\in\R^{N}:V(x)\ge c\}\subset\overline{\mathcal{C}_{0}}$} thus, by {definition of $\Gamma_c$}, $q(\R)\subset\overline{\mathcal{C}_{0}}$ for any $q\in\Gamma_c$. Hence, since $\mathcal{C}_{0}$ is a bounded set, we conclude that \eqref{eqn:coercivita} holds.
\medskip

This discussion shows that Theorem~\ref{P:main1} applies, and so we have
\begin{prop}\label{p:duffing} 
Assume that $V\in C^{1}(\R^{N})$ satisfies~\ref{enum:duffing:V3} and~\ref{enum:duffing:V4}. If $c\geq 0$ is such that~\ref{enum:Vc} holds true then Theorem~\ref{P:main1} gives a solution $q_{c}\in C^{2}(\R,\R^N)$ to the problem~\eqref{eqn:gradSystem}-\eqref{eqn:condbrake} with energy $E_{q_{c}}(t)=-c$ for all $t\in\R$.
\end{prop}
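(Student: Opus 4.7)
The plan is to apply Theorem~\ref{P:main1} directly. Since hypothesis~\ref{enum:Vc} is assumed at the given value $c\geq 0$, the only remaining condition to verify is the mild coerciveness~\eqref{eqn:coercivita} of $J_c$ on $\Gamma_c$. I expect this to follow from a uniform $L^{\infty}$-bound valid for \emph{every} element of $\Gamma_c$, which is strictly stronger than what~\eqref{eqn:coercivita} demands.

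The first step is the set-theoretic inclusion $\{x\in\R^N:V(x)\geq c\}\subset\overline{\mathcal{C}_0}$. By the definition of $\mathcal{C}_0$ in~\ref{enum:duffing:V4}, the positive set $\{V>0\}$ is contained in $\mathcal{C}_0$; hence outside $\overline{\mathcal{C}_0}$ the potential satisfies $V\leq 0$, so any point with $V\geq c\geq 0$ must lie in $\overline{\mathcal{C}_0}$. The boundedness of $\mathcal{C}_0$ assumed in~\ref{enum:duffing:V4} then yields a finite constant
\[
R:=\sup\{|x|:x\in\overline{\mathcal{C}_0}\}<+\infty.
\]

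The second and final step is immediate: any $q\in\Gamma_c$ satisfies $V(q(t))\geq c$ for every $t\in\R$ by the very definition of the admissible class, and the inclusion above forces the trajectory to lie entirely in $\overline{\mathcal{C}_0}$, whence $\|q\|_{L^{\infty}(\R,\R^N)}\leq R$. Consequently $\{q\in\Gamma_c:\|q\|_{L^{\infty}(\R,\R^N)}\leq R\}$ coincides with $\Gamma_c$ itself, and~\eqref{eqn:coercivita} holds trivially. With both~\ref{enum:Vc} and~\eqref{eqn:coercivita} in hand, Theorem~\ref{P:main1} produces the desired solution $q_c\in C^{2}(\R,\R^N)$ to~\eqref{eqn:gradSystem}-\eqref{eqn:condbrake} with energy $E_{q_c}(t)=-c$ on $\R$. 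There is no genuine obstacle here; the whole argument reduces to observing that the global confinement imposed by~\ref{enum:duffing:V4} upgrades the mild coerciveness required by~\eqref{eqn:coercivita} to a uniform $L^{\infty}$-bound on the entire admissible class.
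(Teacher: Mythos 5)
Your proof takes the same route as the paper's own two-sentence justification: rather than verifying the mild coercivity~\eqref{eqn:coercivita} directly, both you and the authors establish the stronger fact that the entire admissible class $\Gamma_c$ is uniformly $L^{\infty}$-bounded, via the inclusion $\{x\in\R^N:V(x)\geq c\}\subset\overline{\mathcal{C}_0}$ together with the boundedness of $\mathcal{C}_0$ from~\ref{enum:duffing:V4}.

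One small point, shared equally by the paper: the inference ``$V\leq 0$ outside $\overline{\mathcal{C}_0}$ and $V\geq c\geq 0$, hence $x\in\overline{\mathcal{C}_0}$'' is airtight for $c>0$ (then $V(x)>0$ forces $x\in\mathcal{C}_0$), but for $c=0$ it does not exclude a point $x\notin\overline{\mathcal{C}_0}$ with $V(x)=0$, and nothing in~\ref{enum:duffing:V3}--\ref{enum:duffing:V4} forbids such zeros of $V$ far from $\mathcal{C}_0$. The confinement $q(\R)\subset\overline{\mathcal{C}_0}$ nevertheless holds for every $q\in\Gamma_0$, by a connectedness argument: since $\nabla V\neq 0$ on $\partial\mathcal{C}_0$, a full collar of $\partial\mathcal{C}_0$ outside $\overline{\mathcal{C}_0}$ has $V<0$, so $\overline{\mathcal{C}_0}$ is an entire connected component of $\{V\geq 0\}$. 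Any other component lies in $\{V=0\}\subset\mathcal{V}^0=\mathcal{V}^0_-\cup\mathcal{V}^0_+$ and, being connected while $\mathcal{V}^0_\pm$ are separated, sits inside a single $\mathcal{V}^0_\pm$; a continuous trajectory $q(\R)$ trapped in such a component could not satisfy both asymptotic conditions defining $\Gamma_0$. This repair is minor and does not alter your strategy, which coincides with the paper's.
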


We now remark that condition~\ref{enum:Vc} is verified if $c>0$ is chosen sufficiently small.
Indeed, by~\ref{enum:duffing:V3} there exists $\nu_{1}>0$ such that $V(x)\geq \nu_{1}$ for $|x|=r_0$. Then $\mathcal{V}^{c}\cap\partial B_{r_0}(0)=\emptyset$ for any $c\in [0,\nu_{1})$ and~\ref{enum:Vc} is satisfied by the sets 
\begin{equation}\label{eq:separationD}
\mathcal{V}^{c}_-:= \mathcal{V}^{c}\cap B_{r_0}(0)\;\;\hbox{ and }\;\;\mathcal{V}^{c}_+:= \mathcal{V}^{c}\setminus B_{r_0}(0).
\end{equation} 
In particular, the fact that $\dist(\mathcal{V}^{c}_{-},\mathcal{V}^{c}_{+})>0$ is guaranteed by the continuity of $V$.

It is worth distinguishing among the different type of solutions $q_{c}$ that we can obtain from Proposition \ref{p:duffing}, for suitable choices of $c\in[0,+\infty)$. In the case $c=0$, we have from~\eqref{eq:separationD} that {$\mathcal{V}^{c}_-=\{0\}$} and $\mathcal{V}^{c}_+=\R^N\setminus\mathcal{C}_{0}$.  Then Proposition~\ref{p:duffing} above states the existence of a solution $q_{0}$ to~\eqref{eqn:gradSystem} connecting {${0}$} with
$\partial \mathcal{C}_{0}$. This solution {\em cannot} be of {\sl brake orbit} type since $\nabla V({0})=0$, and so by Theorem~\ref{P:main1}-\ref{enum:main:1} it cannot have a {\sl contact} point with $\mathcal{V}^{c}_-$.
Analogously, $q_{0}$ {\em cannot} be of {\sl heteroclinic} type, since in this case, by Theorem~\ref{P:main1}-\ref{enum:main:3}, there must exist a set $\Omega\subset \partial\mathcal{C}_{0}$ consisting of critical points of $V$; thus contradicting the hypothesis made on $\partial\mathcal{C}_{0}$ in~\ref{enum:duffing:V4}.
We conclude that $q_{0}$ must be a {\sl homoclinic type} solution with energy $E_{q_{0}}(t)=0$ for all $t\in\R$ satisfying $q_0(t)\to 0$, $\dot q_0(t)\to 0$ as $t\to\pm\infty$, and that reaches $\partial\mathcal{C}_{0}$ at a {\sl contact} time $\sigma\in\R$, with respect to which it is symmetric (see Figure~\ref{fig:duffing}(a)). This gives back the result already proved in~\cite{ambrosetti1992homoclinics},~\cite{antonopoulos2016minimizers},\cite{fusco2017existence} and~\cite{rabinowitz1991results}. In contrast, let us now consider the cases $0<c<\nu_{1}$, where $\nu_{1}:=\min\{V(x):|x|=r_0\}$ as above. Relative to the separation property \eqref{eq:separationD}, Proposition \ref{p:duffing} provides for any such value $c$ the existence of a connecting orbit $q_{c}$ between $\mathcal{V}^{c}_{-}$ and $\mathcal{V}^{c}_{+}$. Arguing as above, we recognize that 
{\em if $c$ is a regular value of} $V$ then $q_{c}$ is a {\sl brake orbit type} solution  of~\eqref{eqn:gradSystem} with energy $E_{q_{c}}(t)=-c$ for all $t\in\R$, connecting $\mathcal{V}^c_-$ and $\mathcal{V}^{c}_+$  (see Figure~\ref{fig:duffing}(b)). 

\begin{figure}[!h]
\def\svgwidth{0.9\columnwidth}
\begin{center} 
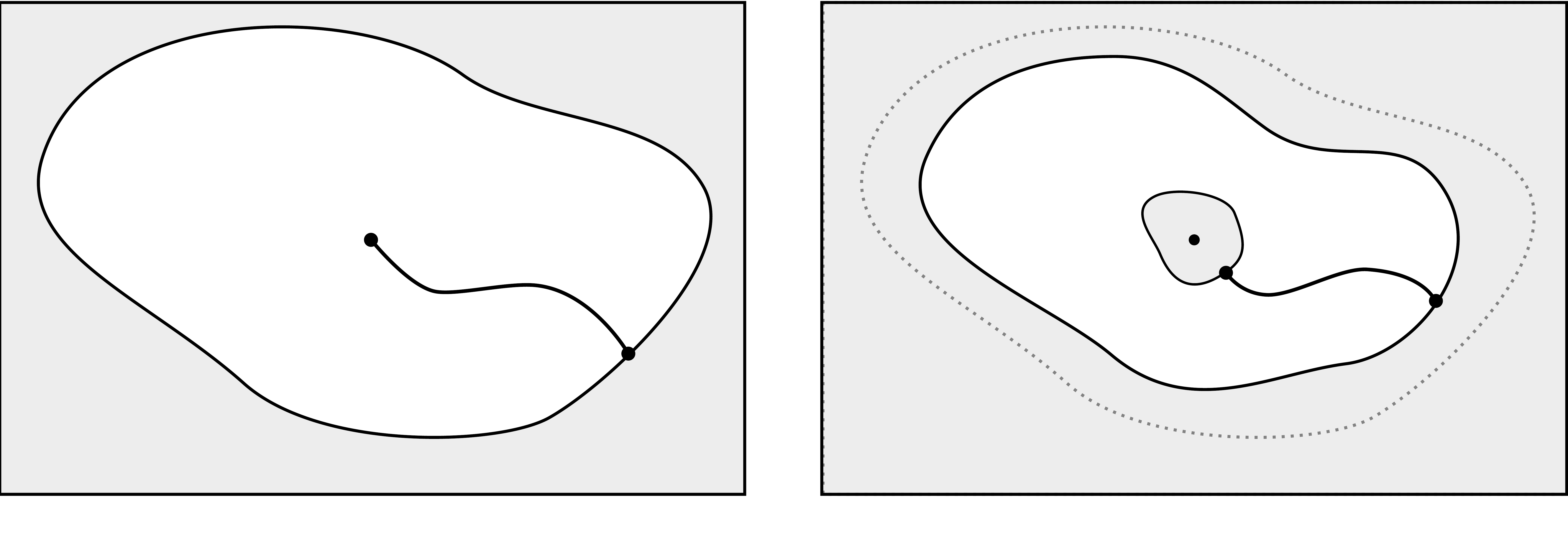
\end{center}
 \caption{Possible configurations in Duffing like systems.}\label{fig:duffing}
\end{figure} 

\smallskip

To analyze the case where $c$ is not a regular value of $V$, we use an approximation argument. Let us first note that $\dist(\mathcal{V}^{c}_{+},\partial\mathcal{C}_{0})\to 0$ as $c\to 0^{+}$ in view of compactness of these sets, and the continuity of $V$. Since $\nabla V\not=0$ on $\partial\mathcal{C}_{0}$,  the  continuity of $\nabla V$ and compactness of $\mathcal{V}^{c}_{+}$ show  $\nabla V\not=0$ on $\partial \mathcal{V}^{c}_+$ if $c$ is small,  say $c<\cduff$ for some number $\cduff\in (0,\nu_{1})$. So by arguing as above, the solution $q_{c}$ can be either of brake orbit type or of homoclinic type (depending on whether $\nabla V\not=0$ on $\partial\mathcal{V}^{c}_{-}$), reaching $\partial \mathcal{V}^{c}_+$ at a {\sl contact} time with respect to which it is symmetric. Using Remark~\ref{R:min} we conclude that  for any $c\in (0,\cduff)$ the solution $q_{c}$ given by Proposition \ref{p:duffing} relative to the decomposition \eqref{eq:separationD} has a {\sl connecting time interval}
$(\alpha_{q_{c}},\omega_{q_{c}})\subset\R$, with $-\infty\leq\alpha_{q_{c}}<\omega_{q_{c}}< +\infty$, 
 such that
\begin{enumerate}[label={\rm$(\arabic*_{\it D})$}, itemsep=3pt, leftmargin=3em]
	\item\label{enum:1cduff} $V(q_{c}(t))>c$ for every $t\in (\alpha_{q_{c}},\omega_{q_{c}})$,
	\item\label{enum:2cduff} $\lim\limits_{t\to\alpha_{q_{c}}^{+}}\dist(q_{c}(t),\mathcal{V}^{c}_-)=0$, and if $\alpha_{q_{c}}>-\infty$ then $\dot q_{c}(\alpha_{q_{c}})=0$,
	$V(q_{c}(\alpha_{q_{c}}))=c$ with $q_{c}(\alpha_{q_{c}})\in\mathcal{V}^{c}_-$,
	 \item\label{enum:3cduff}  $\dot q_{c}(\omega_{q_{c}})=0$, $V(q_{c}(\omega_{q_{c}}))= c$ with $q_{c}(\omega_{q_{c}})\in\mathcal{V}^{c}_+$, 
	 \item\label{enum:4cduff} $J_{c,(\alpha_{q_{c}},\omega_{q_{c}})}(q_{c})=m_{c}$.
\end{enumerate}

We continue by observing that
\begin{Remark}\label{R:boundmc}  There exists $\Mduff>0$ such that
\begin{equation*}\label{eq:mcduffing}
m_{c}=\inf_{\Gamma_{c}}J_{c}\leq \Mduff\,\;\hbox{ for any }\,c\in[0,\cduff).
\end{equation*}

This is obtained along the lines of the proof of \eqref{eq:mc}. Let us fix $\zeta\in\R^{N}$ with $|\zeta|=1$, then we may consider the ray $\{t\zeta:t\geq 0\}\subset\R^N$. From~\ref{enum:duffing:V3}, there must exist $\bar t>0$ such that $\{
t\zeta:0\leq t<\bar t\}\subset\mathcal{C}_{0}$, while $\bar t\zeta\in\partial\mathcal{C}_{0}$. Moreover
for any $c\in[0,\cduff)$ there exist
$0\leq\sigma_{c}<\tau_{c}\leq \bar t$ such that 
\[
\sigma_{c}\zeta\in \mathcal{V}^{c}_-,\; \tau_{c}\zeta\in \mathcal{V}^{c}_+\;\hbox{ and }\;V(t\zeta)>c\;\hbox{ for any }t\in
(\sigma_{c},\tau_{c}).
\]
As in the proof of~\eqref{eq:mc}, we readily see the function 
\[
q_{c,\zeta}(t):=\begin{cases}\sigma_{c}\zeta&\text{if }\, t < \sigma_{c},\\
t\zeta& \text{if }\, \sigma_{c}\leq t<\tau_{c},\\
\tau_{c}\zeta& \text{if }\; \tau_{c}\leq t.\end{cases}
\]
belongs to $\Gamma_{c}$. Hence, setting $\diam(\mathcal{C}_{0}):=\sup\{|x-y|\,: x,y \in \mathcal{C}_{0}\}$, we obtain that
\[
\sup_{c\in[0,\cduff)}m_{c}\leq \sup_{c\in[0,\cduff)}J_{c}(q_{c,\zeta})
\leq {\diam(\mathcal{C}_{0})\bigl(\tfrac 12+\max_{x\in\mathcal{C}_{0}}V(x)\bigr)=: \Mduff.}
\]
\end{Remark}

Arguments similar to the ones used in the case of double well potentials show that the solutions $(q_{c})$ accumulate a near solution of homoclinic type, as $c$ approaches zero.
\begin{prop}\label{p:convergenceduffing} 
Assume that $V\in C^{1}(\R^{N})$ satisfies~\ref{enum:duffing:V3}-\ref{enum:duffing:V4}, and that $\nabla V$ is locally Lipschitz continuous in $\R^{N}$. For any sequence $c_{n}\to 0^+$, consider the sequence of solutions $(q_{c_n})$ to the system~\eqref{eqn:gradSystem} given by Proposition \ref{p:duffing}. Then, up to translations and a subsequence, $q_{c_n}\to q_{0}$ in $C^{2}_{loc}(\R,\R^{N})$ where $q_{0}$ is a solution to \eqref{eqn:gradSystem} of homoclinic type at $x_{0}=0$.
\end{prop}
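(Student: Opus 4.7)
The plan is to mirror the scheme of Proposition~\ref{p:convergence}, with the geometric role of the two wells $\{a_{-},a_{+}\}$ now played by the equilibrium $\{0\}$ and the outer zero set $\partial\mathcal{C}_{0}$, and with Remark~\ref{R:boundmc} supplying the replacement of the uniform action bound $\Mdwell$. Without loss of generality we assume $(c_{n})\subset(0,\cduff)$, so each $q_{c_{n}}$ satisfies properties $(1_{D})$--$(4_{D})$. First I would time-translate so that $\omega_{q_{c_{n}}}=0$ for every $n$ and set $\alpha_{n}:=\alpha_{q_{c_{n}}}-\omega_{q_{c_{n}}}\in[-\infty,0)$. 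Then $q_{c_{n}}(0)\in\mathcal{V}^{c_{n}}_{+}$ forces $|q_{c_{n}}(0)|\geq r_{0}$, $V(q_{c_{n}}(0))=c_{n}$, $\dot q_{c_{n}}(0)=0$, and the reflection symmetry from Theorem~\ref{P:main1} supplies $q_{c_{n}}(t)=q_{c_{n}}(-t)$ for all $t\in\R$. Since $q_{c_{n}}(\R)\subset\overline{\mathcal{C}_{0}}$ (bounded), combining the energy identity $\tfrac12|\dot q_{c_{n}}|^{2}=V(q_{c_{n}})-c_{n}$ with the ODE $\ddot q_{c_{n}}=\nabla V(q_{c_{n}})$ yields uniform $L^{\infty}$-bounds on $q_{c_{n}}$, $\dot q_{c_{n}}$, $\ddot q_{c_{n}}$ depending only on $V|_{\overline{\mathcal{C}_{0}}}$. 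Ascoli--Arzel\`a then delivers a subsequence converging in $C^{2}_{loc}(\R,\R^{N})$ to some $q_{0}$ solving~\eqref{eqn:gradSystem} with $E_{q_{0}}\equiv 0$, symmetric about $0$, and satisfying $|q_{0}(0)|\geq r_{0}$, $V(q_{0}(0))=0$, $\dot q_{0}(0)=0$. In particular $q_{0}(0)\in\partial\mathcal{C}_{0}$ and $q_{0}\not\equiv 0$.

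Next, I would show that $\alpha_{n}\to -\infty$. Suppose instead $\alpha_{n}\to -A$ with $A\geq 0$ finite along a subsequence; this can only happen in the brake case, and then $q_{c_{n}}(\alpha_{n})\in\mathcal{V}^{c_{n}}_{-}\subset B_{r_{0}}(0)$ with $\dot q_{c_{n}}(\alpha_{n})=0$. Passing to the limit gives $q_{0}(-A)\in\overline{B_{r_{0}}(0)}$ with $V(q_{0}(-A))=0$; assumption~\ref{enum:duffing:V3} forces $q_{0}(-A)=0$. Together with $\dot q_{0}(-A)=0$ and the local Lipschitz hypothesis on $\nabla V$, uniqueness for the Cauchy problem yields $q_{0}\equiv 0$, in contradiction with $|q_{0}(0)|\geq r_{0}$.

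Finally, one needs to pin down the asymptotic behaviour of $q_{0}$. By $(4_{D})$ and Remark~\ref{R:boundmc}, $J_{c_{n},(\alpha_{n},0)}(q_{c_{n}})=m_{c_{n}}\leq\Mduff$. Restricting to $(-T,0)$ for arbitrary $T>0$, passing to the $C^{2}_{loc}$-limit (with $c_{n}\to 0$), and then letting $T\to+\infty$ yields
\begin{equation*}
\int_{-\infty}^{0}\bigl(\tfrac12|\dot q_{0}(t)|^{2}+V(q_{0}(t))\bigr)\,dt\leq\Mduff,
\end{equation*}
so in particular $\int_{-\infty}^{0}V(q_{0})\,dt<+\infty$. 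Uniform continuity of $t\mapsto V(q_{0}(t))$ (since $q_{0},\dot q_{0}$ are bounded) forces $V(q_{0}(t))\to 0$ as $t\to-\infty$, and the energy identity gives $\dot q_{0}(t)\to 0$ as well. The $\alpha$-limit set $\mathcal{A}_{q_{0}}$ is thus a nonempty, closed, connected subset of $\{V=0\}\cap\overline{\mathcal{C}_{0}}=\{0\}\cup\partial\mathcal{C}_{0}$. Repeating the translation argument of Lemma~\ref{lem:omegalimite} with the shifts $q_{0}(\cdot+t_{k})$ shows that any $\xi\in\mathcal{A}_{q_{0}}$ must obey $\nabla V(\xi)=0$; assumption~\ref{enum:duffing:V4} then excludes $\partial\mathcal{C}_{0}$, and since~\ref{enum:duffing:V3} makes $\{0\}$ an isolated point of $\{0\}\cup\partial\mathcal{C}_{0}$, connectedness forces $\mathcal{A}_{q_{0}}=\{0\}$. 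The symmetry $q_{0}(t)=q_{0}(-t)$ then delivers the matching limit as $t\to+\infty$, producing the homoclinic at $x_{0}=0$.

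I expect the main obstacle to be the last step: one must transfer the a priori action bound $m_{c_{n}}\leq\Mduff$ to $q_{0}$ through Fatou in order to secure $V(q_{0})\in L^{1}(-\infty,0)$, and then exploit the non-degeneracy hypothesis $\nabla V\neq 0$ on $\partial\mathcal{C}_{0}$ (which has no analogue in the double-well case) to prevent the $\alpha$-limit from settling on the outer zero set instead of collapsing onto the equilibrium $0$.
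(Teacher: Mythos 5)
Your proposal is correct, and after the common setup (translating so that the contact time with $\mathcal{V}^{c_n}_{+}$ sits at $t=0$, uniform $C^2$ bounds coming from the boundedness of $\mathcal{C}_0$, and the Cauchy-uniqueness argument giving $\alpha_n\to-\infty$) it departs from the paper precisely in the decisive last step. The paper proves the uniform trapping estimate \eqref{eq:trappingqnduffing} on the approximating sequence itself: arguing by contradiction it extracts times $t_n$ with $V(q_n(t_n))\to 0$ and then excludes both $q_n(t_n)\to\partial\mathcal{C}_0$ and $q_n(t_n)\to 0$ by means of cut-and-paste competitors $q_{\pm,n}$ (showing that certain portions of the action vanish) combined with annulus-crossing lower bounds from Remark~\ref{rem:disequazioneF} --- the same surgery machinery as in Proposition~\ref{p:convergence}. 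You instead work directly on the limit: you transfer the bound $m_{c_n}\le \Mduff$ of Remark~\ref{R:boundmc} to $q_0$ on $(-\infty,0)$ via locally uniform convergence and monotone passage $T\to+\infty$, deduce $V(q_0(t))\to 0$ by integrability plus uniform continuity and then $\dot q_0(t)\to 0$ from $E_{q_0}\equiv 0$, and rerun the translation argument of Lemma~\ref{lem:omegalimite} to conclude that $\mathcal{A}_{q_0}$ consists of critical points of $V$ at level $0$ inside $\overline{\mathcal{C}_0}=\mathcal{C}_0\cup\partial\mathcal{C}_0$; hypothesis~\ref{enum:duffing:V4} then rules out $\partial\mathcal{C}_0$ and leaves $\mathcal{A}_{q_0}=\{0\}$, after which the reflection symmetry about $t=0$ gives the limit at $+\infty$. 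This is a legitimate and genuinely shorter route, made possible by the non-degeneracy of $\nabla V$ on $\partial\mathcal{C}_0$, which, as you observe, has no counterpart in the two-well setting where the competitor arguments are unavoidable. What the paper's route buys in exchange is the stronger, uniform-in-$n$ localization \eqref{eq:trappingqnduffing} of the whole family $(q_{c_n})$ near the origin, rather than only the asymptotics of the limit; for the statement as given, your argument suffices.
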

\begin{proof}[{\bf Proof of Proposition~\ref{p:convergenceduffing}}] 
Without loss of generality assume the sequence  $c_{n}\to 0^+$ is so $(c_{n})\subset (0,\cduff)$. Since $c_{n}<\cduff$ we have~\ref{enum:Vc} is satisfied by~\eqref{eq:separationD}, and let us denote 
$q_{n}:=q_{c_{n}}$ the solution given by Proposition \ref{p:duffing}. Recalling~\ref{enum:1cduff}  through~\ref{enum:4cduff}, we denote $(\alpha_{n},\omega_{n}):=(\alpha_{c_{n}},\omega_{c_{n}})$ the
{\sl connecting time interval} of $q_{n}$. Since $\omega_{n}\in\R$ for $n\in\N$, we can assume, up to translations, that $\omega_{n}=0$. That is to say,
\begin{equation}\label{eq:traslduffing}
	\alpha_{n}<0=\omega_{n}\;\;\hbox{ and }\;\;\dot q_{n}(0)=0,\ q_{n}(0)\in \partial\mathcal{V}^{c_{n}}_{+}\,\hbox{ for all }n\in\N.
\end{equation}
Since $\mathcal{C}_{0}$ is bounded and by construction $q_{n}(\R)\subset \mathcal{C}_{0}$ for every $n\in\N$, there exists $\Rduff>0$ such that $\sup_{n\in\N}\|q_{n}\|_{L^{\infty}(\R,\R^{N})}\leq \Rduff$. Since $\ddot q_{n}=\nabla V(q_{n})$ on $\R$, the same arguments in the proof of Proposition~\ref{p:convergence} yield that there exists $q_{0}\in C^{2}(\R,\R^{N})$ such that $\ddot q_{0}=\nabla V(q_{0})$ on $\R$, and $q_{n}\to q_{0}$ in $C_{loc}^{2}(\R,\R^{N})$, along a subsequence, as $n\to +\infty$. The pointwise convergence and \eqref{eq:traslduffing},  together with the fact that $\dist(\partial\mathcal{V}^{c_{n}}_{+},\partial\mathcal{C}_{0})\to 0$ as $n\to+\infty$, imply furthermore that 
\begin{equation}\label{eq:q0C0}
q_{0}(0)\in\partial\mathcal{C}_{0}\;\hbox{ and }\;\dot q_{0}(0)=0.
\end{equation}
Our next goal is to show, similarly to~\eqref{eq:infty}, that
\begin{equation}\label{eq:inftyduffing}
\alpha_{n}\to -\infty\;\hbox{ as }\;n\to+\infty.
\end{equation}
If~\eqref{eq:inftyduffing} were false then we could assume, up to a subsequence, that $\alpha_{n}\to \alpha_{0}\in\R$. By~\ref{enum:2cduff} and \eqref{eq:separationD}, since $\cduff\in (0,\nu_{1})$, then yields $q_{n}(\alpha_{n})\in \mathcal{V}^{c_{n}}_{-}\subset B_{r_0}(0)$. Since $c_{n}\to 0$, it would follow that $q_{n}(\alpha_{n})\to 0$ and, recalling $\dot q_{n}(\alpha_{n})=0$ for any $n\in\N$, we would obtain $q_{0}(\alpha_{0})=0$ and $\dot q_{0}(\alpha_{0})=0$. By uniqueness of solutions to the Cauchy problem, necessarily $q_{0}(t)\equiv 0$ for any $t\in\R$, a contradiction with~\eqref{eq:q0C0}. This shows~\eqref{eq:inftyduffing}. 

To prove the proposition, we are left to show that 
\begin{equation}\label{eq:q0homo}
q_{0}(t)\to 0\;\hbox{ as }\;t\to \pm\infty.
\end{equation}
Note that by \eqref{eq:q0C0}, $q_{0}$ is symmetric with respect to the contact time $\omega_{0}=0$. Hence,~\eqref{eq:q0homo} follows once we show that
\begin{equation}\label{eq:q0homonew}
q_{0}(t)\to 0\;\hbox{ as }\;t\to -\infty.
\end{equation}
The latter reduces to prove that for any $r\in (0,r_0)$, with $r_0$ as in~\ref{enum:duffing:V3}, there exist $L_{r}>0$ and $n_{r}\in\N$ s.t.
\begin{equation}\label{eq:trappingqnduffing} 
|q_{n}(t)|<r\;\;\hbox{ for }\;t\in (\alpha_{n},-L_{r}), \;\text{ for any }\,n\geq n_{r}.
\end{equation}
In order to establish~\eqref{eq:trappingqnduffing} we assume by contradiction that there exist $\bar r\in (0,r_0)$, a subsequence of $(q_{n})$, still denoted $(q_{n})$ and a sequence {$(s_{n})\subset\R$, in such a way that
\begin{equation}\label{eq:snduffing} 
|q_{n}(s_{n})|>\bar r\;\hbox{ with }\;s_{n}\in(\alpha_{n},0),\;\hbox{ and }\; s_{n}\to-\infty.
\end{equation}}
The rest of the proof is devoted to obtain a contradiction with~\eqref{eq:snduffing}, following very similar steps as the proof of {\eqref{eq:trappingqn}  in} Proposition~\ref{p:convergence}. We will only sketch the main ideas and spare some details. The contradiction will be reached by arguing that {if \eqref{eq:snduffing} holds  true, then there exists $(t_{n})$ so that $t_{n}\in (s_{n},0)$ for $n\in\N$ with
\begin{equation}\label{eq:TN}
\lim_{n\to+\infty}\dist(q_n(t_{n}),\partial\mathcal{C}_0\cup\{0\})=0.
\end{equation}
But for such sequence there hold furthermore 
\begin{equation}\label{eq:contr}
\liminf_{n\to+\infty}\dist(q_n(t_{n}),\partial\mathcal{C}_0)>0\;\;\hbox{ and }\;\;\liminf_{n\to+\infty}\dist(q_n(t_{n}),0)>0,
\end{equation}
which is in contradiction with \eqref{eq:TN}. This will prove~\eqref{eq:q0homonew}, completing the proof of Proposition~\ref{p:convergenceduffing}.}\\
To establish \eqref{eq:TN}, we remark that the uniform bound on the energies: $J_{c_{n},(\alpha_{n},0)}(q_{n})=m_{c_{n}}\leq \Mduff$ for all $n\in\N$ (see~Remark~\ref{R:boundmc} since $c_n<\cduff$) together with $s_{n}\to-\infty$ yield that for any $n\in\N$ there exists $t_{n}\in (s_{n},0)$ so that $V(q_n(t_{n}))\to 0$ as $n\to+\infty$. This, in light of~\ref{enum:duffing:V3}-\ref{enum:duffing:V4}, shows  \eqref{eq:TN}.
\smallskip

We next argue \eqref{eq:contr}. If $\liminf_{n\to+\infty}\dist(q_n(t_{n}),\partial\mathcal{C}_0)>0$ fails to hold, then there exists $\xi_0\in\partial\mathcal{C}_0$ so that $q_{n}(t_{n})\to\xi_0$ as $n\to+\infty$, along a subsequence that we continue to denote $(q_n)$. We first note that this behavior of $q_n$ is energetically inexpensive, in that 
\begin{equation}\label{eq:codaduffing}
J_{c_{n},(t_{n},0)}(q_{n})\to 0\;\hbox{ as }\;n\to+\infty.
\end{equation}
This is a consequence of an energy analysis with a suitable competitor $(q_{-,n})$ as follows. By continuity, since $V(q_n(t_{n}))>c_{n}>0=V(\xi_0)$, there is $\sigma_{n}\in (0,1)$ such that
\[
V((1-\sigma_{n})q_{n}(t_{n})+\sigma_{n} \xi_0)=c_{n}\;\;\hbox{ and }\;\;V((1-\sigma)q_{n}(t_{n})+\sigma \xi_0)>c_{n}\,\hbox{
for any }\sigma\in [0,\sigma_{n}).
\]
Hence, the curve
\[
q_{-,n}(t):=\begin{cases} q_{n}(\alpha_{n})&\text{if }\,t<\alpha_{n},\\
q_{n}(t)&\text{if }\,\alpha_{n}\leq t< t_{n},\\
(1-(t-t_{n}))q_{n}(t_{n})+(t-t_{n})\xi_0&\text{if }\,t_{n}\leq t<t_{n}+\sigma_{n},\\
(1-\sigma_{n})q_{n}(t_{n})+\sigma_{n}\xi_0&\text{if }\,t_{n}+\sigma_{n}\leq t.\end{cases}
\]
is an element of $\Gamma_{c_{n}}$, thus $J_{c_{n}}(q_{-,n})\geq m_{c_{n}}$. Since $V((1-\sigma)q_{n}(t_{n})+\sigma \xi_0)\to 0$ as $n\to +\infty$ uniformly for $\sigma\in [0,1]$, we derive that
\[
J_{c_{n},(t_{n},+\infty)}(q_{-,n})=J_{c_{n},(t_{n},t_{n}+\sigma_{n})}(q_{-,n})=\tfrac{\sigma_{n}}2|q_{n}(t_{n})-\xi_0|^{2}+\int_{t_{n}}^{t_{n}+\sigma_{n}}\bigl(V((1-\sigma)q_{n}(t_{n})+\sigma \xi_0)-c_{n}\bigr)d\sigma\to 0.
\]
as $n\to+\infty$. Hence, $J_{c_{n},(\alpha_{n},t_{n})}(q_{n})=J_{c_{n},(\alpha_{n},t_{n})}(q_{-,n})=
J_{c_{n}}(q_{-,n})-J_{c_{n},(t_{n},+\infty)}(q_{-,n})\geq m_{c_{n}}-o(1)$, and since
$m_{c_{n}}=J_{c_{n},(\alpha_{n},t_{n})}(q_{n})+J_{c_{n},(t_{n},0)}(q_{n})$
we conclude~\eqref{eq:codaduffing}.
\smallskip

As an intermediate step, we now claim that
\begin{equation}\label{eq:sup}
\sup_{t\in (t_{n},0)}\dist(q_{n}(t),\partial\mathcal{C}_{0})\to 0\;\text{ as }\;n\to+\infty.
\end{equation} 
Indeed, if not, there is a $\rho_*\in (0,r_{0}/3)$ and a sequence $(\tau_{n})$ with $\tau_{n}\in (t_{n},0)$ for $n\in\N$, such that $\dist(q_{n}(\tau_{n}),\partial\mathcal{C}_{0})=3\rho_*$, up to subsequence. In light of~\eqref{eq:traslduffing}, this implies that there exists $(t_{1,n},t_{2,n})\subset (\tau_{n},0)$ such that
$|q_{n}(t_{2,n})-q_{n}(t_{1,n})|=\rho_*$ and $2\rho_*\geq \dist(q_{n}(t),\partial\mathcal{C}_{0})\geq \rho_*$ for every $t\in (t_{1,n},t_{2,n})$. Hence 
\[
\inf_{t\in (t_{1,n},t_{2,n})}V(q(t))\geq \mu(\rho_*):=\min\{ V(x):x\in \mathcal{C}_{0}\,\hbox{ and }\,2\rho_*\geq \dist(x,\partial\mathcal{C}_{0})\geq\rho_*\}>0,
\] 
and by Remark \ref{rem:disequazioneF} we obtain
\[
J_{c_{n},(t_{n},0)}(q_{n})\geq J_{c_{n},(t_{1,n},t_{2,n})}(q_{n})\sqrt{2(\mu(\rho_*)-c_{n})}|q_{n}(t_{2,n})-q_{n}(t_{1,n})|>\sqrt{\mu(\rho_*)}\tfrac{\rho_*}4, 
\]
for $n$ large enough. This last inequality contradicts~\eqref{eq:codaduffing}, proving~\eqref{eq:sup}. 

\noindent Since $t_{n}\to -\infty$ and $q_{n}\to q_{0}$ in $C^{2}_{loc}(\R,\R^{N})$, by \eqref{eq:sup} we conclude
\begin{equation}\label{eq:q0belongC0}
q_{0}(t)\in\partial\mathcal{C}_{0}\;\hbox{ for any }\;t\leq 0.
\end{equation} 
Also, the energy constraint $E_{q_{n}}(t)=-c_{n}$ for every $t\in\R$, together with the pointwise convergence show that $E_{q_{0}}(t)=0$ for any $t\in\R$. That is to say, $\frac 12|\dot q_{0}(t)|^{2}=V(q_0(t))$ for $t\in\R$ and since $V(x)=0$ for $x\in\partial\mathcal{C}_{0}$, by \eqref{eq:q0belongC0} we obtain $\dot q_{0}(t)=0$ for any $t<0$. Thus, $q_{0}$ is constant with $\ddot q(t)=0$ for $t<0$. Nonetheless, using that $\nabla V\not=0$ on $\partial\mathcal{C}_{0}$, see~\ref{enum:duffing:V4}, we would simultaneously have that $\ddot q_{0}(t)=\nabla V(q_{0}(t))\not=0$ for $t<0$ (by \eqref{eq:q0belongC0}). This contradiction  {proves that}
\[
\liminf_{n\to+\infty}\dist(q_n(t_{n}),\partial\mathcal{C}_0)>0.
\]
To conclude the proof, let us finally show {that $\liminf_{n\to+\infty}\dist(q_n(t_{n}),0)>0$.}
Assume by contradiction that $q_{n}(t_{n})\to 0$ and, as in the two-well case, for any $n\in\N$ we fix $\sigma_{n}\in (0,1)$ such that
\[
V((1-\sigma_{n}) 0+\sigma_{n} q_{n}(t_{n}))=c_{n}\;\;\hbox{ and }\;\;V((1-\sigma) 0+\sigma q_{n}(t_{n}))>c_{n}\,\hbox{
for any }\sigma\in (\sigma_{n},1].
\]
Then  
\[q_{+,n}(t):=\begin{cases} \sigma_{n} q_{n}(t_{n})&\text{if }\, t<t_{n}-(1-\sigma_{n}),\\
(t-t_{n}+1) q_{n}(t_{n})&\text{if }\,t_{n}-(1-\sigma_{n})\leq t<t_{n},\\
q_{n}(t)&\text{if }\,t_{n}\leq t<0,\\
q_{n}(0)&\text{if }\, 0\leq t.
\end{cases}\]
is in $\Gamma_{c_{n}}$, whence $J_{c_{n}}(q_{+,n})\geq m_{c_{n}}$, and just as in the two-well case, we obtain as $n\to+\infty$,
\begin{equation}\label{eq:coda2duffing}
J_{c_{n},(\alpha_{n},t_{n})}(q_{n})\to 0\;\;\hbox{ as }\;\;n\to+\infty.\end{equation}
In summary, we are in a situation where
\[
\alpha_{n}<s_{n}<t_{n}<0,\; |q_{n}(s_{n})|>\bar r\;\hbox{ and }\;q_{n}(t_{n})\to 0\; \text{ as }\;n\to+\infty.
\]
Then, when $n$ is large, there exists $(t_{3,n},t_{4,n})\subset (s_{n},t_{n})$ such that
$|q_{n}(t_{4,n})-q_{n}(t_{3,n})|=\tfrac{\bar r}4$ and $|q_{n}(t)|\geq \tfrac{\bar r}4$ for any $t\in (t_{3,n},t_{4,n})$. Letting $\bar\mu(\bar r):=\min\{V(x):\tfrac{\bar r}4\leq|x|\leq \bar r\}$ we can apply Remark~\ref{rem:disequazioneF} similarly as above, to obtain for $n$ sufficiently large: 
\[
J_{c_{n},(\alpha_{n},t_{n})}(q_{n})\geq J_{c_{n},(t_{3,n},t_{4,n})}(q_{n})>\sqrt{\bar\mu(\bar r)}\tfrac{\bar r}4,
\]
This contradicts~\eqref{eq:coda2duffing}, which proves that $q_n(t_n)\to 0$ is not possible either; {$\liminf_{n\to+\infty}\dist(q_n(t_{n}),0)>0$} is now established.
\end{proof}

\subsection{The multiple pendulum type systems.} As a last classical example, we consider the case of {\em multiple pendulum type} systems. That is, we assume (see e.g. \cite{[ABM]}, \cite{[BM]},\cite{rabinowitz2000results}, for analogous assumptions) 
\begin{enumerate}[label={\rm(V\arabic*)},start=5,itemsep=2pt]
 \item\label{enum:pendulum:V5} $V\in C^{1}(\R^N)$ is ${\Z}^{N}$-periodic,
 \item\label{enum:pendulum:V6} $V(x)\geq 0$, and $V(x)=0$ if and only if $x\in {\Z}^{N}$. 
 \end{enumerate}
Upon assuming~\ref{enum:pendulum:V5} and~\ref{enum:pendulum:V6} we observe that $\mathcal{V}^{c}=\Z^{N}$ for $c=0$. Thus, by continuity and periodicity there exists $c_{p}>0$ so that
\[
\mathcal{V}^{c}\subset  
{\textstyle\bigcup\limits_{\xi\in\Z^{n}}}B_{1/3}(\xi)\;\hbox{ for any }\,c\in [0,c_{p}).
\] 
Hence, by denoting for any such $c\in [0,c_{p})$ 
\[\mathcal{V}^{c}_{\xi}:=\mathcal{V}^{c}\cap B_{1/3}(\xi)\;\hbox{ for }\;\xi\in\Z^{N},\]
we observe the following properties hold:
\begin{enumerate}[label=($\mathcal{V}$\roman*),start=1,itemsep=2pt]
\item\label{enum:pendulum:Vi} $\mathcal{V}^{c}={\textstyle\bigcup_{\xi\in\Z^{N}}}\mathcal{V}^{c}_{\xi}$,
\item\label{enum:pendulum:Vii} $\mathcal{V}^{c}_{\xi}$ is compact for any $\xi\in\Z^{N}$,
\item\label{enum:pendulum:Viii} $\exists r_{c}>0$ such that $\dist(\mathcal{V}^{c}_{\xi},\mathcal{V}^{c}_{\xi'})\geq 2r_{c}$
for any pair $\xi\not=\xi'\in\Z^{N}$,
\item\label{enum:pendulum:Viv} $\forall r\in (0,r_{c})$, $\exists \mu_{r}>0$ such that
$V(x)>c+\mu_{r}$ for any $x\in \R^{N}\setminus {\textstyle\bigcup_{\xi\in\Z^{N}}}B_{r}(\mathcal{V}^{c}_{\xi})\, $.
\end{enumerate}
 
\begin{Remark}\label{rem:test}
Let us denote the elements of the canonical basis of $\R^{N}$ by {${\bm e}_\ell$}, for $\ell=1,\ldots,N$, and define associated functions ${\zeta}_{\ell}:\R\to\R^N$ by $\zeta_{\ell}(t)=t{\bm e}_\ell$ for $t\in [0,1]$, $\zeta_{\ell}(t)=0$ for $t\leq 0$ and $\zeta_{\ell}(t)={\bm e}_\ell$ for $t\geq 1$. Since by definition $\mathcal{V}^{c}_{\xi}\subset B_{1/3}(\xi)$ for any $\xi\in\Z^{N}$ and $c\in [0,c_{p})$, elementary geometric considerations give that for any $\ell\in\{1,\ldots, N\}$ and $c\in [0,c_{p})$ there exists $(s_{c,\ell},t_{c,\ell})\subset [0,1]$ in such a way that
\[
V(\zeta_{\ell}(t))>c\;\text{ for any }\,t\in (s_{c,\ell},t_{c,\ell}),\; \zeta_{\ell}(s_{c,\ell})\in\partial \mathcal{V}^{c}_{0}\;\text{ and }\;\zeta_{\ell}(t_{c,\ell})\in\partial \mathcal{V}^{c}_{{\bm e}_{\ell}}.
\]
Next, it will be convenient to introduce the following test function $\eta_{c,\ell}:\R\to\R^N$ given by $\eta_{c,\ell}(t)=\zeta_{\ell}(t)$ for $t\in (s_{c,\ell},t_{c,\ell})$, 
${\eta_{c,\ell}}(t)=\zeta_{\ell}(s_{c,\ell})$ for $t\leq s_{c,\ell}$ and ${\eta_{c,\ell}}(t)=\zeta_{\ell}(t_{c,\ell})$ for $t\geq t_{c,\ell}$. {We readily get the bound} 
\begin{equation}\label{eq:boundmcsecondo}
J_{c}(\eta_{c,\ell})\leq J_{0}(\zeta_{\ell})=\int_{0}^{1}
\tfrac 12|{\bm e}_\ell|^{2}+V(t{\bm e}_\ell)\, dt\leq {M_{p}}:=\tfrac 12+\max_{|x|\leq 1} V(x),
\end{equation}
for any $\ell\in\{1,\ldots,N\}$ and $c\in[0,c_{p})$ as above.
\end{Remark}

We first observe 
\begin{Lemma}\label{L:coercPend}  
There exists $R_{p}>0$ so that {for any $c\in [0,c_{p})$}, if $q\in W^{1,2}_{loc}(\R,\R^{N})$ and $(\sigma,\tau)\subset\R$ are such that
$V(q(t))\geq c$ for $t\in (\sigma,\tau)$ and $J_{c,(\sigma,\tau)}(q)\leq M_{p}+1$, then 
\[
|q(\tau)-q(\sigma)|\leq R_{p}.
\]
\end{Lemma}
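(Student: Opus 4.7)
The key idea is to use periodicity of $V$ to extract a uniform-in-$c$ coercivity constant, then to bound $|q(\tau)-q(\sigma)|$ by decomposing the trajectory according to whether $q(t)$ lies close to or far from the lattice $\Z^N$.

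First, define $\nu := \inf\{V(x) : x \in \R^N,\, \dist(x,\Z^N) \geq 1/3\}$. By \ref{enum:pendulum:V5} and \ref{enum:pendulum:V6}, together with compactness of $\{x \in [0,1]^N : \dist(x,\Z^N) \geq 1/3\}$ modulo periodicity, one has $\nu > 0$. The defining property of $c_p$ forces $c_p \leq \nu$; shrinking $c_p$ if necessary (which does not affect the remaining arguments) we may assume $c_p < \nu$ and set $\mu := \nu - c_p > 0$. Then for every $c \in [0,c_p)$,
\[
V(x) - c \geq \mu \qquad \text{for all } x \in \R^N \setminus \bigcup_{\xi \in \Z^N} B_{1/3}(\xi).
\]

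Next, set $B := \bigcup_{\xi\in\Z^N} B_{1/3}(\xi)$. Since the balls $B_{1/3}(\xi)$ are pairwise disjoint, on every connected component of $\{t\in(\sigma,\tau) : q(t) \in B\}$ the trajectory $q$ remains in a single ball $B_{1/3}(\xi)$. Order the resulting ``inside'' intervals in time and label the corresponding lattice points $\xi_0, \xi_1, \ldots, \xi_J$. On each complementary ``transition'' interval $T_j \subset (\sigma,\tau)$ (where $q \notin B$), one has $V(q)-c \geq \mu$, so Remark \ref{rem:disequazioneF} applied on $T_j$ with endpoints $t_j^-,t_j^+$ gives
\[
J_{c,T_j}(q) \geq \sqrt{2\mu}\,|q(t_j^+) - q(t_j^-)| \geq \sqrt{2\mu}\bigl(|\xi_{j+1} - \xi_j| - \tfrac{2}{3}\bigr),
\]
since $q(t_j^-) \in \partial B_{1/3}(\xi_j)$ and $q(t_j^+) \in \partial B_{1/3}(\xi_{j+1})$. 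When $\xi_{j+1} \neq \xi_j$ we have $|\xi_{j+1} - \xi_j| \geq 1$, so $|\xi_{j+1}-\xi_j| - 2/3 \geq |\xi_{j+1}-\xi_j|/3$; summing over $j$ and using the hypothesis $J_c(q) \leq M_p+1$ yields $\sum_j |\xi_{j+1}-\xi_j| \leq 3(M_p+1)/\sqrt{2\mu}$.

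The conclusion then follows from the triangle inequality
\[
|q(\tau) - q(\sigma)| \leq |q(\sigma) - \xi_0| + \sum_j |\xi_{j+1} - \xi_j| + |q(\tau) - \xi_J|,
\]
where each end-term is at most $1/3 + (M_p+1)/\sqrt{2\mu}$ (either $q(\sigma) \in B_{1/3}(\xi_0)$, in which case the bound is $1/3$, or else $q(\sigma) \notin B$, in which case one applies Remark \ref{rem:disequazioneF} to the leading transit in $B^c$; analogously for $q(\tau)$). Setting $R_p := 2/3 + 5(M_p+1)/\sqrt{2\mu}$ completes the proof; the degenerate cases (no inside visits, or only one) are handled directly by Remark \ref{rem:disequazioneF} and are strictly easier. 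The single delicate point is securing $\mu$ independently of $c \in [0,c_p)$: extracting it from the compact set $\{\dist(\cdot,\Z^N)\geq 1/3\}$, rather than invoking the $c$-dependent quantity $\mu_r$ of property $(\mathcal{V}\text{iv})$, is what ensures uniformity.
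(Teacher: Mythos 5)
Your argument is correct and rests on the same mechanism as the paper's proof: a lower bound $V-c\ge\mu>0$, uniform in $c\in[0,c_{p})$, away from the lattice wells, combined with Remark~\ref{rem:disequazioneF} to charge a fixed amount of action to every transit between distinct wells. The paper runs this by contradiction (a displacement of size $2n\sqrt N$ forces $n$ disjoint crossings of the region where $V-c\ge\mu_{p}$, each of action at least $\sqrt{2\mu_{p}}\,r_{c_{p}}$), whereas you argue directly by telescoping over the sequence of visited lattice cells and obtain an explicit value of $R_{p}$; both proofs secure the uniform $\mu$ by anchoring the ``far from the wells'' region at the top level $c_{p}$ rather than at the variable $c$, which is the one genuinely delicate point and you identify it correctly.

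Two small items to tidy up. First, reducing to $c_{p}<\nu$ by ``shrinking $c_{p}$'' proves the lemma only for the smaller threshold; this is harmless for the applications (and the paper's own proof commits an analogous endpoint abuse by invoking $r_{c_{p}}$ and $\mu_{r_{p}}$ at the level $c=c_{p}$), but the redefinition of $c_{p}$ should be made once and for all, explicitly. Second, the open set $\{t\in(\sigma,\tau):q(t)\in B\}$ may have infinitely many connected components, so the finite list $\xi_{0},\dots,\xi_{J}$, and in particular the existence of a ``last'' cell $\xi_{J}$, needs a word of justification: each transition between \emph{distinct} cells costs action at least $\sqrt{2\mu}/3$, hence only finitely many such transitions occur, while the possibly infinitely many excursions that exit and re-enter the same ball contribute $0$ to the telescoping sum and can be discarded. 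With that observation the bookkeeping closes and the stated constant $R_{p}=2/3+5(M_{p}+1)/\sqrt{2\mu}$ is verified.
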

\begin{proof}[{\bf Proof of Lemma~\ref{L:coercPend}}]
{Let us write $r_{p}:=\frac12r_{c_{p}}$, and $\mu_{p}:=\mu_{r_{p}}$ as in~\ref{enum:pendulum:Viii}-\ref{enum:pendulum:Viv}.} Then it follows that {$V(x)>c_{p}+{\mu_{p}}$} for any $x\in \R^{N}\setminus {{\textstyle\bigcup_{\xi\in\Z^{N}}}B_{r_{p}}(\mathcal{V}^{c_{p}}_{\xi})}$. Now, to establish Lemma~\ref{L:coercPend} let us assume by contradiction that there are sequences $(c_n)\subset [0,c_{p})$ and $(q_{n})\subset W^{1,2}_{loc}(\R,\R^{N})$ with corresponding intervals $(\sigma_{n},\tau_{n})\subset\R$ such that $V(q_{n}(t))\geq c_{n}$ for any $t\in (\sigma_{n},\tau_{n})$, $J_{{c_n},(\sigma_{n},\tau_{n})}(q_{n})\leq M_{p}+1$, and
\[
|q_{n}(\tau_{n})-q_{n}(\sigma_{n})|\geq 2n\sqrt{N}.
\]
In particular, since {$c_n\leq c_{p}$} we see from~\ref{enum:pendulum:Viii} that $\dist(\mathcal{V}^{c_n}_{\xi},\mathcal{V}^{c_n}_{\xi'}){\geq 2r_{c_{p}}}$ for all $n\in\N$, if $\xi\neq\xi'$. These inequalities, along with basic geometric considerations, cf.~\ref{enum:pendulum:V5}, imply the existence of $n$ disjoint intervals $(s_i,t_i)\subset(\sigma_{n}, \tau_n)$ for $1\leq i\leq n$, such that $q_n(t)\notin {{\textstyle\bigcup_{\xi\in\Z^{N}}}B_{r_{p}}(\mathcal{V}^{c_{p}}_{\xi})}$ if $t\in{\textstyle\bigcup_{i}}(s_i,t_i)$, while $|q_n(t_i)-q_n(s_i)|\geq {2(r_{c_{p}}-r_{p})=r_{c_{p}}}$. Then, by Remark \ref{rem:disequazioneF}, $J_{c_n,(\sigma_n,\tau_n)}(q_n)\geq {n\sqrt{2\mu_{p}}r_{c_{p}}}$
for any $n\in\N$. But this goes in contradiction with $J_{{c_n},(\sigma_{n},\tau_{n})}(q_{n})\leq M_{p}+1$,  and the Lemma follows.
\end{proof}

\smallskip

The above properties allow us to apply Theorem~\ref{P:main1}, giving the next
\begin{prop}\label{p:pendumum} 
Assume that $V$ satisfies~\ref{enum:pendulum:V5} and~\ref{enum:pendulum:V6}. Then for every $c\in [0,{c_{p}})$ there exists $k_c\in\N$ and a finite set $\{\xi^{1},\ldots,{\xi^{k_c}}\}\subset \Z^{N}\setminus\{0\}$, satisfying
\begin{equation}\label{eq:generate}
\xi^i\neq\xi^j\;\text{ for }\;i\neq j,\;\hbox{ and }\;\{ n_{1}\xi^{1}+\ldots+n_{k_c}\xi^{k_c}:\, n_{1},\ldots, n_{k_c}\in\Z\}=\Z^{N},
\end{equation}
for which, given any $j\in\{1,\ldots,k_c\}$, there is a solution $q_{c,j}\in C^{2}(\R,\R^N)$ to~\eqref{eqn:gradSystem} with energy {$E_{q_{c,j}}(t)=-c$ for any $t\in\R$}, verifying
\begin{equation}\label{eqn:cond-pend}
\inf_{t\in\R}\dist(q_{c,j}(t),\mathcal{V}^{c}_0)=\inf_{t\in\R}\dist(q_{c,j}(t),\mathcal{V}^{c}_{\xi^{j}})=0\;\text{ and }\;\|q_{c,j}\|_{L^{\infty}(\R,\R^{N})}\leq R_{p}+1,
\end{equation}
where $R_{p}$ is given by Lemma \ref{L:coercPend}.
\end{prop}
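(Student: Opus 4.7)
The plan is to set up, for each non-zero lattice vector $\xi \in \Z^N$, a variational problem analogous to the one treated in Section~\ref{sec:variational}, this time searching for trajectories connecting $\mathcal{V}^c_0$ to $\mathcal{V}^c_\xi$ specifically. For those $\xi$ whose minimal action is controlled by the constant $M_p$ of Remark~\ref{rem:test}, I would mimic the arguments that led to Theorem~\ref{P:main1} to produce a classical solution of~\eqref{eqn:gradSystem} with energy $-c$ satisfying~\eqref{eqn:cond-pend}. The collection of admissible $\xi$ will turn out to be finite and to contain $\bm{e}_1,\ldots,\bm{e}_N$, so it will generate $\Z^N$ as a group.

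Concretely, for each $\xi \in \Z^N \setminus \{0\}$ I would introduce the admissible class
\[
\Gamma_c^\xi := \{q \in H^1_{loc}(\R,\R^N) : V(q(t)) \geq c\text{ for all }t,\ \liminf_{t \to -\infty} \dist(q(t),\mathcal{V}^c_0) = \liminf_{t \to +\infty} \dist(q(t),\mathcal{V}^c_\xi) = 0\}
\]
and let $m_c^\xi := \inf_{\Gamma_c^\xi} J_c$. By Remark~\ref{rem:test} (translating the test function $\eta_{c,\ell}$ if needed), one has $m_c^{\bm{e}_\ell} \leq M_p$ for each $\ell = 1,\ldots,N$. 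The coerciveness I need is that any $q \in \Gamma_c^\xi$ with $J_c(q) \leq M_p + 1$ satisfies $\|q\|_{L^\infty(\R,\R^N)} \leq R_p + \tfrac{1}{3}$: indeed, choosing $\sigma_n \to -\infty$ with $\dist(q(\sigma_n),\mathcal{V}^c_0) \to 0$, and applying Lemma~\ref{L:coercPend} on the interval with endpoints $\sigma_n$ and any $b \in \R$ (valid since $V\circ q \geq c$ and the partial action is bounded by $J_c(q) \leq M_p+1$), yields $|q(b) - q(\sigma_n)| \leq R_p$; passing to the limit and using $\mathcal{V}^c_0 \subset B_{1/3}(0)$ gives the claim. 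Equipped with this uniform bound, the sequence of Lemmas~\ref{lem:m>0}--\ref{lem:omegalimite} together with the reflection and periodic-extension construction go through with straightforward modifications, substituting $\Gamma_c^\xi$ for $\Gamma_c$ and the pair $(\mathcal{V}^c_0,\mathcal{V}^c_\xi)$ for $(\mathcal{V}^c_-,\mathcal{V}^c_+)$, and producing, whenever $m_c^\xi \leq M_p$, a $C^2$ solution $q_{c,\xi}$ of~\eqref{eqn:gradSystem} on $\R$ with energy $-c$ meeting~\eqref{eqn:cond-pend} and $\|q_{c,\xi}\|_{L^\infty} \leq R_p + 1$.

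Finally I would set $\mathcal{S}_c := \{\xi \in \Z^N \setminus \{0\} : m_c^\xi \leq M_p\}$. Since $\bm{e}_1,\ldots,\bm{e}_N \in \mathcal{S}_c$, this set generates $\Z^N$. It is also finite: for each $\xi \in \mathcal{S}_c$, the associated minimizer is contained in $\overline{B_{R_p + 1/3}(0)}$ and accumulates on $\mathcal{V}^c_\xi \subset B_{1/3}(\xi)$, so $|\xi| \leq R_p + \tfrac{2}{3}$; thus $\mathcal{S}_c \subset \Z^N \cap \overline{B_{R_p + 1}(0)}$ is finite. Enumerating $\mathcal{S}_c = \{\xi^1,\ldots,\xi^{k_c}\}$ and setting $q_{c,j} := q_{c,\xi^j}$ would complete the construction. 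The hard part will be to verify that the concentration argument of Lemma~\ref{lem:concentrazione} and the asymptotic analyses of Lemmas~\ref{lem:limiti} and~\ref{lem:omegalimite} adapt even though $\mathcal{V}^c$ now has countably many components; this should work because property~\ref{enum:pendulum:Viv} combined with Remark~\ref{rem:disequazioneF} forces each excursion between two distinct components of $\mathcal{V}^c$ to cost at least a fixed positive amount of action (bounded below by $2r_c\sqrt{2\mu_{r_c}}$, independently of $\xi$), so only finitely many such excursions are compatible with the bounded energy budget $\leq M_p + 1$, effectively localizing the analysis to the two-component setting of Section~\ref{sec:variational}.
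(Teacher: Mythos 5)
Your proposal deviates from the paper in a way that introduces a genuine gap. You set up, for each prescribed $\xi\in\Z^N\setminus\{0\}$, an admissible class $\Gamma_c^\xi$ based on the pair $(\mathcal{V}^c_0,\mathcal{V}^c_\xi)$, and then propose to run the machinery of Section~\ref{sec:variational} with this pair substituted for $(\mathcal{V}^c_-,\mathcal{V}^c_+)$. But this pair does \emph{not} satisfy hypothesis~\ref{enum:Vc}: one needs $\mathcal{V}^c=\mathcal{V}^c_-\cup\mathcal{V}^c_+$, whereas $\mathcal{V}^c_0\cup\mathcal{V}^c_\xi$ omits all the other components $\mathcal{V}^c_{\xi'}$, $\xi'\notin\{0,\xi\}$. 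This is not a cosmetic issue. The chain of Lemmas~\ref{lem:minimisol}--\ref{lem:brake} hinges on the fact that, on the connecting time interval $(\alpha_q,\omega_q)$ (defined via contacts with $\mathcal{V}^c_-$ and $\mathcal{V}^c_+$), the minimizer satisfies $V(q(t))>c$ strictly; this is immediate when $\mathcal{V}^c_\pm$ exhaust $\mathcal{V}^c$, but fails to be automatic in your setting, where the trajectory can touch an intermediate $\mathcal{V}^c_{\xi'}$ at some $t_0\in(\alpha_q,\omega_q)$. Once $V(q(t_0))=c$ at an interior time, the perturbation argument of Lemma~\ref{lem:minimisol} and the reflection/cut-and-paste arguments of Lemmas~\ref{lem:sol-W-alphabeta}--\ref{lem:brake} break down at $t_0$. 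Your observation that each excursion between components costs a fixed amount of action only bounds the \emph{number} of such excursions; it does not rule them out, which is what would be needed.

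The paper avoids this entirely by never prescribing the target $\xi$ in advance. It always splits the \emph{whole} of $\mathcal{V}^c$ into two closed pieces: first $\mathcal{V}^c_0$ versus $\mathcal{V}^c\setminus\mathcal{V}^c_0$, then, inductively, $\bigcup_{\xi\in\mathcal{L}_j}\mathcal{V}^c_\xi$ versus its complement, where $\mathcal{L}_j$ is the sublattice generated by the $\xi^1,\ldots,\xi^j$ found so far. With such a decomposition $(\mathbf{V}^c)$ is genuinely satisfied, Theorem~\ref{P:main1} applies as stated, and the component $\xi^{j+1}$ actually reached is identified \emph{a posteriori} from the resulting solution; periodicity and a translation by an element of $\mathcal{L}_j$ then normalize it to emanate from $\mathcal{V}^c_0$. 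Finiteness and the generating property~\eqref{eq:generate} then come from the strict growth of $\mathcal{L}_j$ at each step together with the uniform bound $\xi^j\in B_{R_p+1}(0)\cap\Z^N$, not from showing that each $\bm{e}_\ell$ individually admits a connecting orbit. Your bound on $\mathcal{S}_c$ and your coercivity estimate via Lemma~\ref{L:coercPend} are fine in themselves, but the heart of the proof — producing a classical connecting solution for a prescribed $\xi$ — is not justified by the proposed argument.
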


\begin{proof}[{\bf Proof of Proposition~\ref{p:pendumum}}] For any $c\in [0,{c_{p}})$, let us set
\begin{equation}\label{eq:decompP1}
\mathcal{V}^{c}_{-,1}:=\mathcal{V}^{c}_{0}\;\;\hbox{ and }\;\;\mathcal{V}^{c}_{+,1}:={\textstyle\bigcup\limits_{\xi\in\Z^{N}\setminus\{0\}}}\mathcal{V}^{c}_{\xi}=\mathcal{V}^{c}\setminus\mathcal{V}^{c}_{-,1}.
\end{equation}
In particular, we observe in light of~\ref{enum:pendulum:Viii} that
\[
\dist(\mathcal{V}^{c}_{-,1},\mathcal{V}^{c}_{+,1})\geq 2r_{c},
\]
so~\ref{enum:Vc} holds. Then defining $\Mm_{c,1}$ as in~\eqref{defMc}, relative to the partition~\eqref{eq:decompP1}, we let
\begin{equation*}\label{eq:m1}
m_{c,1}:=\inf_{q\in\Gamma_{c,1}}{J_{c}}(q).
\end{equation*}
Recall the definition of $\eta_{c,\ell}$ in Remark \ref{rem:test} and observe that $\eta_{c,1}\in \Gamma_{c,1}$. {This, together with~\eqref{eq:boundmcsecondo},} yields
\[
m_{c,1}\leq J_{c}(\eta_{c,1})\leq M_{p}.
\]
Consider now any minimizing sequence $(q_{n})\subset\Mm_{c,1}$, so that $J_{c}(q_{n})\to m_{c,1}$. Eventually passing to a subsequence, we can assume that {$J_{c}(q_{n})\le  M_{p}+1$} for any $n\in\N$, and so by Lemma~\ref{L:coercPend}
\begin{equation*}\label{eq:claimP1}
|q_{n}(t)-q_{n}(s)|\leq R_{p}\;\text{ for any }(s,t)\subset\R,\text{ and }\,n\in\N.
\end{equation*}
Therefore the coercivity condition~\eqref{eqn:coercivita} of $J_c$ is satisfied for the division~\eqref{eq:decompP1}, hence Theorem~\ref{P:main1} gives the existence of a solution $q_{c,1}\in C^{2}(\R,\R^N)$ to~\eqref{eqn:gradSystem} with energy
$E_{q_{c,1}}(t)=-c$ for all $t\in\R$, satisfying
\[
\liminf_{t\to -\infty}\dist(q_{c,1}(t),\mathcal{V}^{c}_{-,1})=0\;\;\text{ and }\;\;\liminf_{t\to +\infty}\dist(q_{c,1}(t), \mathcal{V}^{c}_{+,1})=0.
\]
This solution is either of brake orbit type, case~\ref{enum:main:1} of the theorem, of homoclinic type, case~\ref{enum:main:2}, or of heteroclinic type in the case~\ref{enum:main:3}. We now continue with the proof of Proposition~\ref{p:pendumum} by checking that~\eqref{eqn:cond-pend} is satisfied, regardless of the case in consideration.

If we are in the case (a) there exists $-\infty<\sigma<\tau<+\infty$ such that  
\begin{enumerate}
\item[$(a_0)$]\label{enum:qi} $q_{c,1}(\sigma)\in\mathcal{V}^{c}_{-,1}$, $q_{c,1}(\tau)\in\mathcal{V}^{c}_{+,1}$,  $V(q_{c,1}(t))>c$ for $t\in (\sigma,\tau)$, $q_{c,1}(\sigma+t)=q_{c,1}(\sigma-t)$ and $q_{c,1}(\tau+t)=q_{c,1}(\tau-t)$ for all $t\in\R$.
\end{enumerate}
Let us now point out that $(a_0)$, \eqref{eq:decompP1} and~\ref{enum:pendulum:Vi} yield that $q_{c,1}(\sigma)\in\mathcal{V}^{c}_{0}$ and that there exists $\xi^{1}\in\Z^{N}\setminus\{0\}$ with $q_{c,1}(\tau)\in\mathcal{V}^{c}_{\xi^{1}}$. Moreover, from Remark~\ref{R:min} we have that for any $(s,t)\subset (\sigma,\tau)$, $J_{c,(s,t)}(q_{c,1})\leq J_{c,(\sigma,\tau)}(q_{c,1})=m_{c,1}\leq  M_{p}$ and so Lemma~\ref{L:coercPend} gives in particular $|q_{c,1}(\sigma)-q_{c,1}(t)|\leq R_{p}$ for any $t\in (\sigma,\tau)$. By periodicity we then obtain $\dist(q_{c,1}(t),\mathcal{V}^{c}_{0})\leq R_{p}$ for any $t\in\R$. Since $\mathcal{V}^{c}_{0}\subset B_{1/3}(0)$ we get $\|q_{c,1}\|_{L^{\infty}(\R,\R^{N})}\leq R_{p}+1$, therefore property~\eqref{eqn:cond-pend} is satisfied by $q_{c,1}$.

If we are in the case (b), then there exists $\sigma\in\R$ such that  
\begin{enumerate}
\item[$(b_0)$]\label{enum:qhi} $q_{c,1}(\sigma)\in\mathcal{V}^{c}_{\pm,1}$, $\lim_{t\to\pm\infty}\dist(q_{c,1}(t),\mathcal{V}^{c}_{\mp,1})=0$,
 $V(q_{c,1}(t))>c$ for $t\in \R\setminus\{\sigma\}$, and
 $q_{c,1}(\sigma+t)=q_{c,1}(\sigma-t)$ for all $t\in\R$.
\end{enumerate}
Once again, we point out that $(b_0)$, \eqref{eq:decompP1} and~\ref{enum:pendulum:Vi} imply that there is $\xi^{1}\in\Z^{N}\setminus\{0\}$ such that either $q_{c,1}(\sigma)\in\mathcal{V}^{c}_{0}$ and $\lim_{t\to\pm\infty}\dist(q_{c,1}(t),\mathcal{V}^{c}_{\xi^{1}})=0$, or $q_{c,1}(\sigma)\in\mathcal{V}^{c}_{\xi^{1}}$ and  $\lim_{t\to\pm\infty}\dist(q_{c,1}(t),\mathcal{V}^{c}_{0})=0$. 
By Remark~\ref{R:min} we have $J_{c,(s,t)}(q_{c,1})\leq J_{c,(-\infty,\sigma)}(q_{c,1})=m_{c,1}\leq  M_{p}$ for any $(s,t)\subset (-\infty,\sigma)$. This, combined with Lemma~\ref{L:coercPend} and the reflection $q_{c,1}(\sigma+t)=q_{c,1}(\sigma-t)$ for all $t\in\R$, shows that $|q_{c,1}(t)-q_{c,1}(s)|\leq R_{p}$ for any $s<t\in \R$. But since $q_{c,1}(\sigma)\in\mathcal{V}^{c}_{\pm,1}$ and $\lim_{t\to\pm\infty}\dist(q_{c,1}(t),\mathcal{V}^{c}_{\mp,1})=0$ we obtain
$\dist(q_{c,1}(t),\mathcal{V}^{c}_{0})\leq R_{p}$ for any $t\in\R$ and so  $\|q_{c,1}\|_{L^{\infty}(\R,\R^{N})}\leq R_{p}+1$. This shows again \eqref{eqn:cond-pend} for $q_{c,1}$.

Finally if we are in the case (c) we have 
\begin{enumerate}
\item[$(c_0)$]\label{enum:qhhi}  $V(q_{c,1}(t))>c$ for all $t\in\R$, $\lim\limits_{t\to-\infty}\dist(q_{c,1}(t),\mathcal{V}^{c}_{-,1})=0\,$ and $\lim\limits_{t\to+\infty}\dist(q_{c,1}(t),\mathcal{V}^{c}_{+,1})=0$.\end{enumerate}
As before $(c_0)$, \eqref{eq:decompP1} and \ref{enum:pendulum:Vi} show that there is $\xi^{1}\in\Z^{N}\setminus\{0\}$ such that $\lim\limits_{t\to+\infty}\dist(q_{c,1}(t),\mathcal{V}^{c}_{\xi^{1}})=0$. From Remark~\ref{R:min} we have  $J_{c,(s,t)}(q_{c,1})\leq J_{c}(q_{c,1})=m_{c,1}\leq M_{p}$ for any $(s,t)\subset \R$, which combined with Lemma~\ref{L:coercPend} gives $|q_{c,1}(t)-q_{c,1}(s)|\leq R_{p}$ for any $s<t\in \R$. Since $\lim_{t\to-\infty}\dist(q_{c,1}(t),\mathcal{V}^{c}_{0})=0$, we deduce that $\dist(q_{c,1}(s),\mathcal{V}^{c}_{0})\leq R_{p}$ for any $s\in\R$ and hence $\|q_{c,1}\|_{L^{\infty}(\R,\R^{N})}\leq R_{p}+1$.  Whence, even in case (c) the condition~\eqref{eqn:cond-pend} holds for $q_{c,1}$.\\
The above argument shows the existence of a solution $q_{c,1}$ with energy $E_{q_{c,1}}=-c$ with
\[
\inf_{t\in\R}\dist(q_{c,1}(t),\mathcal{V}^{c}_0)=\inf_{t\in\R}\dist(q_{c,1}(t),\mathcal{V}^{c}_{\xi^{1}})=0\;\,\text{ and }\;\,\|q_{c,1}\|_{L^{\infty}(\R,\R^{N})}\leq R_{p}+1.
\]
In order to establish the multiplicity of solutions in Proposition~\ref{p:pendumum}, we will proceed by induction. 
Let us assume that for $j\geq 1$ we have
\begin{enumerate}[label=(\Roman*),start=1,itemsep=2pt]
\item\label{enum:ind1} There are $\xi^{1},\xi^{2},..., \xi^{j}\in\Z^{N}\setminus\{0\}$ such that $\xi^h\ne\xi^k$ for $h\ne k$ and if we set $\mathcal{L}_{j}:=\{\sum_{i=1}^{j}n_{i}\xi^{i}:\, n_{1},\ldots,n_{j}\in\Z\}$, then $\mathcal{L}_{j}\not=\Z^{N}.$
\item\label{enum:ind2} For any $i\in\{1,\ldots,j\}$ there exists $q_{c,i}$ with energy $E_{q_{c,i}}=-c$ such that 
\begin{equation*}\label{eq:connect}
\inf_{t\in\R}\dist(q_{c,i}(t),\mathcal{V}^{c}_0)=\inf_{t\in\R}\dist(q_{c,i}(t),\mathcal{V}^{c}_{\xi^{i}})=0\;\text{ and }\;\|q_{c,i}\|_{L^{\infty}(\R,\R^{N})}\leq R_{p}+1.
\end{equation*}
\end{enumerate} 
Proposition~\ref{p:pendumum} will follow once we show that \ref{enum:ind1} and \ref{enum:ind2} together imply the existence of $\xi^{j+1}\in\Z^{N}\setminus\mathcal{L}_{j}$ and a solution $q_{c,j+1}$ with energy $E_{q_{c,j+1}}=-c$, in such a way that
\begin{equation}\label{eq:connecttesi}
\inf_{t\in\R}\dist(q_{c,j+1}(t),\mathcal{V}^{c}_0)=\inf_{t\in\R}\dist(q_{c,j+1}(t),\mathcal{V}^{c}_{\xi^{j+1}})=0\;\text{ and }\;\|q_{c,j+1}\|_{L^{\infty}(\R,\R^{N})}\leq R_{p}+1.
\end{equation}
To see this, we consider in view of~\ref{enum:ind1}, the following decomposition of $\mathcal{V}^{c}$: 
\begin{equation}\label{eq:decompPj}
\mathcal{V}^{c}_{-,j+1}:={\textstyle\bigcup_{\xi\in\mathcal{L}_{j}}}\mathcal{V}^{c}_{\xi}\;\;\hbox{ and }\;\;\mathcal{V}^{c}_{+,j+1}:=\mathcal{V}^{c}\setminus\mathcal{V}^{c}_{-,j+1}={\textstyle\bigcup_{\xi\in\Z^{N}\setminus\mathcal{L}_{j}} }\mathcal{V}^{c}_{\xi}.\end{equation}
In light of~\ref{enum:ind1} both sets $\mathcal{V}^{c}_{-,j+1}$, $\mathcal{V}^{c}_{+,j+1}$ are non-empty, they clearly verify $\dist(\mathcal{V}^{c}_{-,j+1},\mathcal{V}^{c}_{+,j+1})\geq r_{c},$ and 
\begin{equation}\label{enum:Vcj2}
\mathcal{V}^{c}_{\pm,j+1}=\xi+\mathcal{V}^{c}_{\pm,j+1}\,\hbox{ for any }\,\xi\in\mathcal{L}_{j},
\end{equation}
see Figure \ref{fig:multiple} below.
\begin{figure}[!h]
\def\svgwidth{1\columnwidth}
\begin{center} 
\input{pendoloheteroFv2.pdf_tex}
\end{center}
 \caption{\small Possible configurations in pendulum like systems.}\label{fig:multiple}
\end{figure} 

Then, according to the partition~\eqref{eq:decompPj} of $\mathcal{V}^{c}$,
define $\Mm_{c,j+1}$ as in \eqref{defMc} and let $m_{c,j+1}:=\inf_{q\in\Gamma_{c,j+1}}{J_{c}}(q)$. Since $\mathcal{L}_{j}\not=\Z^{N}$ there must exist $\ell_{j+1}\in\{1,\ldots, N\}$ so that ${\bm e}_{\ell_{j+1}}\notin\mathcal{L}_{j}$. Then, {by Remark \ref{rem:test},} $\eta_{c,\ell_{j+1}}\in \Mm_{c,j+1}$, and so by \eqref{eq:boundmcsecondo}
$$m_{c,j+1}\leq J_{c}(\eta_{c,\ell_{j+1}})\leq  M_{p}.$$
Let $(q_{n})\subset\Mm_{c,j+1}$ be such that $J_{c}(q_{n})\to m_{c,j+1}$. With no loss of generality we can assume that $J_{c}(q_{n})< M_{p}+1$ for any $n\in\N$, and so by Lemma~\ref{L:coercPend} we obtain
\begin{equation}\label{eq:boundj}
|q_{n}(t)-q_{n}(s)|\leq R_{p}\;\text{ for any }\,(s,t)\subset\R,\,\text{ and }\,n\in\N.
\end{equation}
Since $q_{n}\in\Mm_{c,j+1}$ we have $\liminf_{t\to-\infty}\dist(q_{n}(t),\mathcal{V}^{c}_{-,j+1})=0$ for any $n\in\N$. Hence, there exist sequences $(s_{n})\subset\R$ and $(\zeta_{n})\subset \mathcal{L}_j$ for which
\begin{equation}\label{eq:bddLattice1}
|q_{n}(s_{n})-\zeta_{n}|<1\;\text{ for all }\, n\in\N.
\end{equation}
By periodicity of $V$, \eqref{enum:Vcj2} and by \eqref{eq:boundj}-\eqref{eq:bddLattice1}, we have 
\begin{equation*}\label{eq:claimPj} 
\tilde q_{n}:=q_{n}-\zeta_{n}\in\Mm_{c,j+1},\ J_{c}(\tilde q_{n})\to m_{c,j+1}\;\text{ and }\;
\| \tilde q_{n}\|_{L^{\infty}(\R,\R^{N})}\leq R_{p}+1\;\hbox{ for any }n\in\N,
\end{equation*}
from which we conclude that
\begin{equation}\label{eq:importantBdd}
m_{c,j+1}= \inf\{ J_{c}(q): q\in\Gamma_{c,j+1},\,\|q\|_{L^{\infty}(\R,\R^N)}\leq R_{p}+1\},
\end{equation}
namely, the coercivity condition~\eqref{eqn:coercivita} of $J_c$ over $\Gamma_{c,j+1}$ follows. Thus, Theorem~\ref{P:main1} yields the existence of a solution $ \bar q_{c,j+1}\in C^{2}(\R,\R^N)$ to~\eqref{eqn:gradSystem} with energy $E_{ \bar q_{c,j+1}}=-c$, satisfying
\[
\liminf_{t\to -\infty}\dist(\bar q_{c,j+1}(t), \mathcal{V}^{c}_{-,j+1})=0\;\;\text{ and }\;\;\liminf_{t\to +\infty}\dist( \bar q_{c,j+1}(t), \mathcal{V}^{c}_{+,j+1})=0.
\]
This solution is either of brake orbit type, case~\ref{enum:main:1}, of homoclinic type, case~\ref{enum:main:2}, or of heteroclinic type in the case~\ref{enum:main:3}. 
If we are in the case~\ref{enum:main:1}, then there exists $-\infty<\sigma<\tau<+\infty$ such that  
\begin{enumerate}
\item[$(a_j)$]\label{enum:qhiv} $\bar q_{c,j+1}(\sigma)\in\mathcal{V}^{c}_{-,j+1}$, $\bar q_{c,j+1}(\tau)\in\mathcal{V}^{c}_{+,j+1}$,  $V( \bar q_{c,j+1}(t))>c$ for $t\in (\sigma,\tau)$, $\bar q_{c,j+1}(\sigma+t)=\bar q_{c,j+1}(\sigma-t)$ and $ \bar q_{c,j+1}(\tau+t)= \bar q_{c,j+1}(\tau-t)$ for all $t\in\R$.
\end{enumerate}
By $(a_j)$ there exists $\xi_{-}\in \mathcal{L}_{j}$, $\xi_{+}\in\Z^{N}\setminus\mathcal{L}_{j}$ such that $\bar q_{c,j+1}(\sigma)\in\mathcal{V}^{c}_{\xi_{-}}$ and $\bar q_{c,j+1}(\tau)\in\mathcal{V}^{c}_{\xi_{+}}$. Then $\xi^{j+1}=\xi_{+}-\xi_{-}\in\Z^{N}\setminus\mathcal{L}_{j}$, and by periodicity of $V$, the function $q_{c,j+1}:=\bar q_{c,j+1}-\xi_{-}$ is a solution to~\eqref{eqn:gradSystem} with energy $E_{q_{c,j+1}}=-c$. By Remark~\ref{R:min}
we have moreover $J_{c,(s,t)}(q_{c,j+1})\leq J_{c,(\sigma,\tau)}(q_{c,j+1})=m_{c,j+1}\leq  M_{p}$ for any $(s,t)\subset (\sigma,\tau)$, and so, arguing as in the case $(a_0)$ above we deduce $\|q_{c,j+1}\|_{L^{\infty}(\R,\R^{N})}\leq R_{p}+1$. Then property~\eqref{eq:connecttesi} with respect to $\xi^{j+1}$ is satisfied by $q_{c,j+1}$.\\
In case~\ref{enum:main:2} there exists $\sigma\in\R$ such that  
\begin{enumerate}
\item[$(b_j)$]\label{enum:qhv} $\bar q_{c,j+1}(\sigma)\in\mathcal{V}^{c}_{\pm,j+1}$, $\lim_{t\to\pm\infty}\dist(\bar q_{c,j+1}(t),\mathcal{V}^{c}_{\mp,j+1})=0$,
 $V(\bar q_{c,j+1}(t))>c$ for $t\in \R\setminus\{\sigma\}$, and
 $\bar q_{c,j+1}(\sigma+t)=\bar q_{c,j+1}(\sigma-t)$ for all $t\in\R$.
\end{enumerate}
In particular, from $(b_j)$ we deduce the existence of $\bar\xi\in \Z^{N}$ so that $\bar q_{c,j+1}(\sigma)\in\mathcal{V}^{c}_{\bar\xi}$. Let us say that $\bar\xi\in \mathcal{L}_{j}$, then $\lim_{t\to\pm\infty}\dist(\bar q_{c,j+1}(t),\mathcal{V}^{c}_{+,j+1})=0$. The symmetry of $\bar q_{c,j+1}$ with respect to $\sigma$ together with the discreteness of $\mathcal{V}^{c}_{+,j+1}$ implies that there is $\xi_{\infty}\in \Z^{N}\setminus \mathcal{L}_{j}$ so that
$\lim_{t\to\pm\infty}\dist(\bar q_{c,j+1}(t),\mathcal{V}^{c}_{\xi_{\infty}})=0$. In this case we set $\xi^{j+1}:=\xi_{\infty}-\bar\xi\in\Z^{N}\setminus\mathcal{L}_{j}$ and the function $q_{c,j+1}:=\bar q_{c,j+1}-\bar\xi$ is a solution to~\eqref{eqn:gradSystem} with energy
$E_{\bar q_{c,j+1}}=-c$, which verifies the first part of~\eqref{eq:connecttesi} with respect to $\xi^{j+1}$.  Otherwise, if $\bar\xi\in\Z^{N}\setminus\mathcal{L}_{j}$, then $\lim_{t\to\pm\infty}\dist(\bar q_{c,j+1}(t),\mathcal{V}^{c}_{-,j+1})=0$. The symmetry of  $\bar q_{c,j+1}$ with respect to $\sigma$ and the discreteness of $\mathcal{V}^{c}_{-,j+1}$ imply the existence of $\xi_{\infty}\in \mathcal{L}_{j}$ so that $\lim_{t\to\pm\infty}\dist(\bar q_{c,j+1}(t),\mathcal{V}^{c}_{\xi_{\infty}})=0$.
In this case we let $\xi^{j+1}:=\bar\xi-\xi_{\infty}\in\Z^{N}\setminus\mathcal{L}_{j}$, and {again} the function $q_{c,j+1}:=\bar q_{c,j+1}-\xi_{\infty}$ is a solution to~\eqref{eqn:gradSystem} with energy $E_{q_{c,j+1}}=-c$ which verifies the first part of~\eqref{eq:connecttesi} with respect to $\xi^{j+1}$.\\
To get the second part of \eqref{eq:connecttesi} observe that by Remark~\ref{R:min}
we have $J_{c,(s,t)}(q_{c,j+1})\leq J_{c,(-\infty,\sigma)}(q_{c,j+1})=m_{c,j+1}\leq  M_{p}$ for any $(s,t)\subset (-\infty,\sigma)$. Then Lemma~\ref{L:coercPend} and the symmetry property of $q_{c,j+1}$ with respect to $\sigma$ allow us to argue as in the case $(b_0)$ above to deduce $\|q_{c,j+1}\|_{L^{\infty}(\R,\R^{N})}\leq R_{p}+1$, thus showing \eqref{eq:connecttesi}.

Finally, let us assume that we are in case~\ref{enum:main:3}. Then
\begin{enumerate}
\item[$(c_j)$]\label{enum:qhvi}  $V(\bar q_{c,j+1}(t))>c$ for all $t\in\R$, and $\lim\limits_{t\to\pm\infty}\dist(\bar q_{c,j+1}(t),\mathcal{V}^{c}_{\pm,j+1})=0$.\end{enumerate}
By invoking once again the discreteness of the sets $\mathcal{V}^{c}_{\pm}$, $(c_j)$ shows the existence of $\xi_{-}\in \mathcal{L}_{j}$ and $\xi_{+}\in\Z^{N}\setminus\mathcal{L}_{j}$ in such a way that $\lim\limits_{t\to\pm\infty}\dist(\bar q_{c,j+1}(t),\mathcal{V}^{c}_{\xi_{\pm}})=0$. Setting $\xi^{j+1}:=\xi_{+}-\xi_{-}\in\Z^{N}\setminus\mathcal{L}_{j}$ we see from the periodicity of $V$ that the function $q_{c,j+1}:=\bar q_{c,j+1}-\xi_{-}$ is a solution to~\eqref{eqn:gradSystem} with energy $E_{q_{c,j+1}}=-c$, verifying the first part of~\eqref{eq:connecttesi} with respect to $\xi^{j+1}$. By Remark~\ref{R:min}
we have  $J_{c,(s,t)}(q_{c,j+1})\leq J_{c}(q_{c,j+1})=m_{c,j+1}\leq M_{p}$ for any $(s,t)\subset \R$. Using Lemma~\ref{L:coercPend} and arguing as in the case {$(c_0)$} above we obtain again $\|q_{c,j+1}\|_{L^{\infty}(\R,\R^{N})}\leq R_{p}+1$.  Then 
\eqref{eq:connecttesi} follows for $q_{c,j+1}$.\smallskip

This concludes the proof of the inductive step, and {hence the proof of Proposition~\ref{p:pendumum}}. 
\end{proof}

\begin{Remark}\label{R:pendolumsol} 
Proposition~\ref{p:pendumum} constitutes a multiplicity result. It asserts the existence of $k_{c}$ elements $\xi^1,\ldots, \xi^{k_{c}}$ in the lattice $\Z^{N}\setminus\{0\}$, for each of which there exists a connecting orbit between $\mathcal{V}^{c}_{0}$ and $\mathcal{V}^{c}_{\xi^{j}}$. In particular, we necessarily have $k_{c}\geq N$, due to~\eqref{eq:generate}. 
\end{Remark}

As for the preceding cases, we finalize by analyzing the convergence properties of the family of {solutions $q_{c,j}$} given by Proposition~\ref{p:pendumum} as $c\to 0^{+}$.

\begin{prop}\label{p:convergencepend} Assume that $V\in C^{1}(\R^{N})$ satisfies~\ref{enum:pendulum:V5}-\ref{enum:pendulum:V6} and that $\nabla V$ is locally Lipschitz continuous in $\R^{N}$. For any sequence $c_{n}\to 0^+$, let $k_{c_n}\in\N$, $\{\xi_{n}^{1},\ldots,\xi_{n}^{k_{c_n}}\}\subset \Z^{N}\setminus\{0\}$ and $q_{c_{n},j}$ be {the solution} given by Proposition~\ref{p:pendumum} associated to $\xi_{n}^{j}$ for $j=1,\ldots, k_{c_n}$. 
Then, along a subsequence, we have that
\begin{enumerate}[label=(\roman*),start=1,itemsep=2pt]
\item\label{enum:conv1}  There exists $\kappa\in\N$ such that $k_{c_{n}}=\kappa\,$ for all $n\in\N;$
\item\label{enum:conv2} There exist distinct elements $\hat \xi^{1},\ldots,\hat \xi^{\kappa}$ in $\Z^N\setminus\{0\}$ so that 
$\{n_1\hat\xi^1+\ldots+n_{\kappa}\hat\xi^{\kappa}:n_1,\ldots,n_{\kappa}\in\Z\}=\Z^N$, and
\[
\xi_{n}^{1}=\hat\xi^{1},\ldots,\xi_{n}^{k_{c_n}}=\hat\xi^{\kappa}\;\,\text{ for all }\,n\in\N;
\]
\item\label{enum:conv3} For any $j\in\{1,\ldots,\kappa\}$ there is a solution $q^{j}\in C^{2}(\R,\R^{N})$ to~\eqref{eqn:gradSystem} of heteroclinic type between $0$ and $\hat\xi^{j}$,  and there exists $(\tau_{n,j})\subset\R$ such that
\[
q_{c_n,j}(\cdot-\tau_{n,j})\to q^{j}\;\text{ in }\;C^{2}_{loc}(\R,\R^{N}),\;\text{ as }\;n\to+\infty.
\]
\end{enumerate}

\end{prop}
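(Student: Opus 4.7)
The plan unfolds in three stages: first, extract a subsequence that freezes the combinatorial structure (the number $k_{c_n}$ and the vectors $\xi_n^j$ themselves); second, normalize each family $(q_{c_n,j})_n$ by a time translation $\tau_{n,j}$ and pass to a $C^2_{loc}$-limit; third, identify the limit as a heteroclinic between $0$ and $\hat\xi^j$ through a two-sided trapping argument modelled on~\eqref{eq:trappingqn} of Proposition~\ref{p:convergence}.

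\textbf{Stabilization.} By~\eqref{eqn:cond-pend}, $\|q_{c_n,j}\|_{L^\infty(\R,\R^N)}\leq R_p+1$; since the trajectory of $q_{c_n,j}$ comes arbitrarily close to $\mathcal{V}^{c_n}_{\xi_n^j}\subset B_{1/3}(\xi_n^j)$, this forces $|\xi_n^j|\leq R_p+2$ for every admissible $n,j$. The set $\Z^N\cap\overline{B_{R_p+2}(0)}$ is finite, and the $\xi_n^j$ are pairwise distinct for fixed $n$, so a diagonal extraction yields a subsequence along which $k_{c_n}=\kappa$ is constant and each $\xi_n^j$ equals some fixed $\hat\xi^j\in\Z^N\setminus\{0\}$. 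Property~\eqref{eq:generate} passes to the limit since it holds at every $n$, giving (i) and (ii).

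\textbf{Compactness.} Fix $j\in\{1,\ldots,\kappa\}$ and set $\varrho_j:=\tfrac14|\hat\xi^j|>0$. By~\eqref{eqn:cond-pend} and continuity there exists $\tau_{n,j}\in\R$ such that the shifted function $\tilde q_n(t):=q_{c_n,j}(t-\tau_{n,j})$ satisfies $\dist(\tilde q_n(0),\{0,\hat\xi^j\})=\varrho_j$. The pointwise energy identity $E_{\tilde q_n}=-c_n$, the equation $\ddot{\tilde q}_n=\nabla V(\tilde q_n)$, and the bound $\|\tilde q_n\|_{L^\infty}\leq R_p+1$ provide uniform $L^\infty$-bounds on $\dot{\tilde q}_n$ and $\ddot{\tilde q}_n$. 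Ascoli-Arzel\`a combined with the local Lipschitz continuity of $\nabla V$, exactly as in the proof of Proposition~\ref{p:convergence}, extracts a subsequence with $\tilde q_n\to q^j$ in $C^2_{loc}(\R,\R^N)$, where $q^j\in C^2(\R,\R^N)$ solves $\ddot q^j=\nabla V(q^j)$, $E_{q^j}\equiv 0$, and $\dist(q^j(0),\{0,\hat\xi^j\})=\varrho_j$; in particular $q^j$ is non-constant.

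\textbf{Identification of the limit and main obstacle.} Since $V\geq 0$ with $V^{-1}(0)=\Z^N$, the argument of Lemma~\ref{lem:omegalimite} shows that the $\alpha$- and $\omega$-limit sets of $q^j$ are singletons $\{\xi_-\},\{\xi_+\}\subset\Z^N$, and that $\dot q^j(t)\to 0$ as $t\to\pm\infty$. The remaining and hardest step is to prove $\{\xi_-,\xi_+\}=\{0,\hat\xi^j\}$: contrary to the two-well setting of Proposition~\ref{p:convergence}, the lattice $\Z^N$ offers infinitely many candidate intermediate limits. The plan is to reproduce the trapping scheme of~\eqref{eq:trappingqn}: arguing by contradiction on a sequence $s_n\to-\infty$ with $|\tilde q_n(s_n)|\geq\bar r$, the uniform energy bound $m_{c_n,j}\leq M_p$ of Remark~\ref{rem:test} forces (by integration) a time $t_n\in(s_n,0)$ with $V(\tilde q_n(t_n))\to 0$, so that $\tilde q_n(t_n)\to\xi_*\in\Z^N$ up to extraction. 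A cut-and-paste competitor is then built in the spirit of the $q_{\pm,n}$-construction of Propositions~\ref{p:convergence} and~\ref{p:convergenceduffing}; the key new ingredient---and the heart of the difficulty---is the use of the periodicity~\ref{enum:pendulum:V5} to translate the competitor by $\xi_*$ or by a suitable element of $\mathcal{L}_j$, so that it lies in the admissible class used in Proposition~\ref{p:pendumum} to construct $q_{c_n,j}$. Combined with Remark~\ref{rem:disequazioneF} applied to a forced annular crossing of $\tilde q_n$ at distance at least $r_{c_p}$ from $\Z^N$ (cf.~\ref{enum:pendulum:Viii}), this yields the contradiction. The mirror argument at $+\infty$, together with the non-constancy of $q^j$, then gives $\xi_-\neq\xi_+$ and $\{\xi_-,\xi_+\}\subset\{0,\hat\xi^j\}$, completing (iii).
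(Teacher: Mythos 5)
Your proposal follows the paper's own argument almost step for step: parts~\ref{enum:conv1}--\ref{enum:conv2} by the finiteness of $\Z^N\cap\overline{B_{R_p+1}(0)}$, part~\ref{enum:conv3} by normalizing via a time shift, extracting a $C^2_{loc}$-limit $q^j$ with Ascoli--Arzel\`a and the locally Lipschitz $\nabla V$, and reducing the heteroclinic identification to the trapping estimate modelled on~\eqref{eq:trappingqn}, which both you and the paper leave as a sketch. One imprecision worth noting: you invoke ``the argument of Lemma~\ref{lem:omegalimite}'' to conclude that the $\alpha$- and $\omega$-limit sets of $q^j$ are singletons in $\Z^N$, but that lemma is stated for minimizers $q\in\MM_c$ and its proof uses $\lim_{t\to\pm\infty}\dist(q(t),\mathcal{V}^c_\pm)=0$ (Lemma~\ref{lem:limiti}), which is not yet known for the limit $q^j$; in fact it is precisely the content of the trapping estimate you are about to establish. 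The paper avoids this intermediate step and goes directly to the trapping claim, so the reordering is harmless — and your observation that the cut-and-paste competitor must be translated by $\xi_*$ or by an element of $\mathcal{L}_j$ so as to lie in the $j$-th admissible class $\Gamma_{c,j+1}$ is a genuine technical point the paper glosses over when it says ``we omit it.''
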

\begin{proof}[{\bf Proof of Proposition~\ref{p:convergencepend}}] 
{With no loss of generality we can assume {$c_n<c_{p}$} for all $n\in\N$}. Let $k_{c_n}\in\N$, $\{\xi_{n}^{1},\ldots,{\xi_{n}^{k_{c_n}}}\}\subset \Z^{N}\setminus\{0\}$ and $q_{c_{n},j}$, for $j=1,\ldots,k_{c_n}$ be given by Proposition~\ref{p:pendumum} associated to $\xi_{n}^{j}$. 
We know $\|q_{c_n,j}\|_{L^{\infty}(\R,\R^{N})}\leq R_{p}+1$ for any $n\in\N$. Moreover $\inf_{t\in\R}\dist(q_{c_{n},j}(t),{\mathcal{V}^{c_n}_0})=\inf_{t\in\R}\dist(q_{c_{n},j}(t),\mathcal{V}^{c_n}_{\xi_{n}^{j}})=0$ for any $j\in \{1,\ldots,k_{c_n}\}$ {and all $n\in\N$}.\\ 
\noindent In particular, the above shows that $\{\xi_{n}^{1},\ldots,\xi_{n}^{k_{c_n}}\}\subset B_{R_{p}+1}(0)\cap\Z^{N}$ for any $n\in\N$. Since $B_{R_{p}+1}(0)\cap\Z^{N}$ is
a finite set, all the sequences $(k_{c_n})$, $(\xi_{n}^{j})$ for $1\leq j\leq k_{c_n}$ take their values in a finite set, hence they are all constant along a common subsequence: there exists $\kappa\in\N$, $\{\hat \xi^{1},\ldots,\hat \xi^{\kappa}\}\subset\Z^N\setminus\{0\}$ and an increasing sequence $(n_{i})\subset\N$, such that $k_{c_{n_i}}=\kappa$, $\xi_{n_{i}}^{j}=\hat\xi^{j}$ for any $i\in\N$ and $1\leq j\leq\kappa$. Thus~\ref{enum:conv1} and \ref{enum:conv2} follow. 

For $j\in\{1,\ldots,\kappa\}$ fixed, and let us {simplify} the notation by allowing 
\[
q_i:=q_{c_{n_{i}},j},\; c_{i}:=c_{n_{i}}\;\text{ and }\;
\xi:=\hat\xi^{j}=\xi_{n_{i}}^{j}\;\;{\hbox{for all $i\in\N$}}.
\]
Since {any} $q_{i}$ is given by Theorem~\ref{P:main1}, we can invoke Remark~\ref{R:min} to see that, for each $i\in\N$, $q_{i}$ has a connecting time interval $(\alpha_i,\omega_i)\subset\R$ with $-\infty\leq\alpha_i\leq\omega_i\leq+\infty$ in such a way that
\begin{enumerate}[label={\rm$(\arabic*_{i})$},itemsep=2pt]
	\item\label{enum:pend:1n} $V(q_{i}(t))>c_{i}$ for every $t\in (\alpha_{i},\omega_{i})$,
	\item\label{enum:pend:2n} $\lim\limits_{t\to\alpha_{i}^{+}}\dist(q_{i}(t),\mathcal{V}^{c_{i}}_0)=0$, and if $\alpha_{i}>-\infty$ then $\dot q_{i}(\alpha_{i})=0$, $V(q_{i}(\alpha_{i}))=c_{i}$ with $q_{i}(\alpha_{i})\in\mathcal{V}^{c_{i}}_0$,
	 \item\label{enum:pend:3n} $\lim\limits_{t\to\omega_{i}^{-}}\dist(q_{i}(t),\mathcal{V}^{c_{i}}_{\xi})=0$, and if $\omega_{i}<+\infty$ then ${\dot q_{i}}(\omega_{i})=0$, $V(q_{i}(\omega_{i}))= c_{l}$ with $q_{i}(\omega_{i})\in\mathcal{V}^{c_{i}}_{\xi}$,
	 \item\label{enum:pend:4n}  $J_{c_{i},(\alpha_{i},\omega_{i})}(q_{i})=m_{c_{i}}=\inf\limits_{q\in\Gamma_{c_{i}}}J_{c_{i}}(q)$. 
\end{enumerate}
The rest of our argument goes along the same lines as the proof convergence to a heteroclinic type solution in Proposition~\ref{p:convergence}, so we will briefly review it. We first renormalize the sequence $(q_i)$ by a phase shift procedure. From~\ref{enum:pend:1n}-\ref{enum:pend:2n}-\ref{enum:pend:3n} it follows that for all $i\in\N$ there is $t_i\in(\alpha_i,\omega_i)$ so that
$|q_i(t_i)|=\tfrac 12$.
Renaming, {if necessary}, $q_i$ to be $q_i(\cdot-t_i)$, we can assume
\begin{equation*}\label{eq:pend:trasl}
\alpha_i<0<\omega_i\;\text{ and }\;|q_i(0)|=\tfrac12,\;\text{ for any }i\in\N.
\end{equation*}

Just like before, the bound $\sup_{i\in\N}\|q_i\|_{L^{\infty}(\R,\R^N)}\leq R_{p}+1$ combined with energy constraint $E_{q_i}=-c_i$ and the fact that $q_i$ solves the system $\ddot q=\nabla V(q)$ on $\R$, allows us to conclude that $(\dot q_i)$ and $(\ddot q_i)$ are uniformly bounded in $L^{\infty}(\R,\R^N)$. Whence, by the Ascoli-Arsel\`a Theorem there is $q_0\in C^1(\R,\R^N)$ so that $(q_i)$ has a subsequence, yet denoted by $(q_i)$, for which $q_i\to q_0$ in $C^1_{loc}(\R,\R^N)$ {as $i\to +\infty$}. This convergence is then bootstrapped into the equation $\ddot q_i=\nabla V(q_i)$ in order to enhance it to $C^2_{loc}(\R,\R^N)$. 
This shows, in turn, that
\[
\ddot q_0=\nabla V(q_0)\;\text{ on }\;\R.
\]
In addition, by taking the limit $i\to+\infty$ {in $|q_i(0)|=\tfrac12$ and $\|q_i\|_{L^{\infty}(\R,\R^N)}\leq R_{p}+1$}, we learn that this solution satisfies
\begin{equation}\label{eq:pend:dist0limit}
|q_0(0)|=\tfrac12\;\hbox{ and }\;\|q_0\|_{L^{\infty}(\R,\R^N)}\leq R_{p}+1.
\end{equation}
Furthermore, the same argument used to establish~\eqref{eq:infty} can be applied to our case, to yield
\begin{equation*}\label{eq:pend:infty}
\alpha_{i}\to -\infty\;\hbox{ and }\;\omega_{i}\to+\infty,\;\text{ as }\;i\to+\infty,
\end{equation*}
for potentials satisfying~\ref{enum:pendulum:V5}-\ref{enum:pendulum:V6}: arguing by contradiction, we show that $q_0$ solves the Cauchy problem $\ddot q=\nabla q$ and $q(0)=0,\dot q(0)=0$, whence $q_0\equiv 0$, which is contrary to~\eqref{eq:pend:dist0limit}.

To conclude the proof of Proposition~\ref{p:convergenceduffing} we are left to show that $q_0$ is of heteroclinic type. More precisely, our goal is to show that
\begin{equation}\label{eq:pend:goal}
q_0(t)\to 0\;\text{ as }\;t\to-\infty,\;\text{ and }\;\;q_0(t)\to \xi\;\text{ as }\;t\to+\infty.
\end{equation}
As before, interpolation inequalities would then prove that $q_0$ is a solution to~\eqref{eqn:gradSystem} of heteroclinic type between $0$ and $\xi$. By analogy with our previous analysis,~\eqref{eq:pend:goal} reduces to proving { that for any $r\in (0,\frac13)$ there exist $L^-_r,L^+_r>0$ and $i_r\in\N$, in such a way that
\[
|q_{i}(t)|<r\;\hbox{ for }\,t\in (\alpha_{i},-L^{-}_{r})\;\;\hbox{ and }\;\;
|q_{i}(t)-\xi|<r\;\hbox{ for }\,t\in (L^{+}_{r},\omega_{i}),
\]
 for all $i\ge i_r$.}
The proof of this assertion can be obtained by rephrasing the argument used to prove {\eqref{eq:trappingqn} in} Proposition~\ref{p:convergence}, and we omit it.
\end{proof}

\vskip5em

\vskip 3em


\begin{thebibliography}{99}
\bibitem{alessio2013stationary} F. Alessio,
\emph{Stationary layered solutions for a system of Allen-Cahn type equations},
Indiana Univ. Math. J. {\bf 62} (2013), no.~5, 1535--1564.

\bibitem{[ABM]} F. Alessio, M.L. Bertotti and P. Montecchiari, 
{\em Multibump solutions to possibly degenerate equilibria for almost periodic Lagrangian systems},
 Z. Angew. Math. Phys. {\bf 50} (1999), no.~6, 860--891.

\bibitem{[AlM3bump]} F. Alessio and P. Montecchiari, 
 {\em Entire solutions in $\mathbb{R}^{2}$ for a class of Allen-Cahn equations}, 
    ESAIM Control Optim. Calc. Var. {\bf 11} (2005), no.~4, 633-672.

\bibitem{[AlMalmost]} \underline{\hskip3.5em} , 
\emph{Multiplicity of entire solutions for a class of almost periodic Allen-Cahn type equations}, Adv. Nonlinear Stud.~{\bf 5} (2005), no.~4, 515--549.
    
\bibitem{[AlMbrake]} \underline{\hskip3.5em} , 
\emph{Brake orbits type solutions to some class of semilinear
    elliptic equations}, Calc. Var. Partial Differential Equations {\bf 30} (2007), no.~1, 51-- 83.
    
\bibitem{[AlM-NLS]} \underline{\hskip3.5em} , 
\emph{An energy constrained method for the existence of layered type solutions of NLS equations}, Ann. Inst. H. Poincar\'e Anal. Non Lin\'eaire {\bf 31} (2014), no.~4, 725--749. 

\bibitem{alessio2016brake} \underline{\hskip3.5em} ,
  \emph{Brake orbit solutions for semilinear elliptic systems with asymmetric double well potential},
J. Fixed Point Theory Appl. {\bf 19} (2017), no.~1, 691--717.
       
\bibitem{alikakos2008connection} N.D. Alikakos and G. Fusco,
  \emph{On the connection problem for potentials with several global minima},  
  Indiana Univ. Math. J. {\bf 57} (2008), no.~4,1871--1906.
  
\bibitem{[1]} A. Ambrosetti, V. Benci and Y. Long, 
\emph{A note on the existence of multiple brake orbits}, Nonlinear Anal. {\bf 21} (1993), no.~9, 643--649. 

\bibitem{ambrosetti1992homoclinics} A. Ambrosetti and M.L. Bertotti, 
  \emph{Homoclinics for second order conservative systems}, 
  Partial differential equations and related subjects (Trento, 1990), 21--37, 
  Pitman Res. Notes in Math. Ser., 269, Longman Sci. Tech., Harlow, 1992.

\bibitem{antonopoulos2016minimizers} P. Antonopoulos and P. Smyrnelis,
 \emph{On minimizers of the Hamiltonian system $u''=\nabla W(u)$ and on the existence of heteroclinic, homoclinic and periodic orbits}, Indiana Univ. Math. J. {\bf 65} (2016), no.~5, 1503--1524.
 
\bibitem{[2]} V. Benci, 
\emph{Closed geodesics for the Jacobi metric and periodic solutions of prescribed energy of natural Hamiltonian systems}, Ann. Inst. H. Poincar\'e Anal. Non Lin\'eare {\bf 1} (1984), no.~5, 401--412.

\bibitem{[3]} V. Benci and F. Giannoni, 
\emph{A new proof of the existence of a brake orbit}, Advanced topics in the theory of dynamical systems (Trento 1987), 37--49,  Notes Rep. Math. Sci. Eng., 6, Academic Press, Boston, MA, 1989.
  
\bibitem{[BM]} M.L. Bertotti and P. Montecchiari, 
\emph{Connecting orbits for some classes of almost periodic Lagrangian systems}, J. Differential Equations {\bf 145} (1998), no.~2, 453--468.
 
\bibitem{[4]} S. Bolotin and V.V. Kozlov, 
\emph{Librations with many degrees of freedom} (Russian), Prikl. Mat. Mekh. {\bf 42}, (1978), no.~2, 24--250.

\bibitem{[BMR]} J. Byeon, P. Montecchiari and P.H. Rabinowitz,
  \emph{A double well potential system}, Anal. PDE {\bf 9} (2016), no.~7, 1737-1772.
      
  \bibitem{[5]} V. Coti Zelati and E. Serra, 
  \emph{Multiple brake orbits for some classes of singular Hamiltonian systems}, Nonlinear Anal. {\bf 20} (1993), no.~8, 1001--1012. 

\bibitem{fusco2017existence} G. Fusco, G.F. Gronchi and M. Novaga,
 \emph{On the existence of connecting orbits for critical values of the energy},
 J. Differential Equations {\bf 263} (2017), no.~12, 8848--8872.  

\bibitem{fusco2018existence}\underline{\hskip3.5em} , 
\emph{On the existence of heteroclinic connections}, S\~{a}o Paulo J. Math. Sci. {\bf 12} (2018), no.~1,  68--81.
  
\bibitem{[GGP1]} R. Giamb\`o, F. Giannoni and P. Piccione, 
\emph{Orthogonal geodesic chords, brake orbits and homoclinic orbits in Riemannian manifolds},
Adv. Differential Equations {\bf 10} (2005), no.~8, 931--960.

\bibitem{[GGP2]} \underline{\hskip3.5em} , 
\emph{Multiple brake orbits and homoclinics in Riemannian manifolds}, Arch. Ration. Mech. Anal. {\bf 200} (2011), no.~2, 691--724.

\bibitem{[6]} E.W.C van Groesen, 
\emph{Analytical mini-max methods for Hamiltonian brake orbits of prescribed energy}, J. Math. Anal. Appl. {\bf 132} (1988), no.~1, 1--12.

\bibitem{katzourakis2016ontheloss} N. Katzourakis, 
 \emph{On the loss of compactness in the heteroclinic connection problem},
  Proc. Roy. Soc. Edinburgh Sect.~A~{\bf 146} (2016), no.~3, 595--608.

\bibitem{[7]} C. Li, 
\emph{The study of minimal period estimates for brake orbits of autonomous subquadratic Hamiltonian systems}, Acta Math. Sin. (Engl. Ser.) {\bf 31} (2015), no.~10, 1645--1658.

\bibitem{[8]} Y. Long, D. Zhang and C. Zhu, 
\emph{Multiple brake orbits in bounded convex symmetric domains}, Adv. Math. {\bf 203} (2006), no.~2, 568--635.

\bibitem{santambrogio2016metric} A. Monteil and H. Santambrogio, 
 \emph{Metric methods for heteroclinic connections},
  Math. Methods Appl. Sci. {\bf 41} (2016), no.~3, Special Issue. 
  
\bibitem{rabinowitz1991results} P.H. Rabinowitz and K. Tanaka,  
 \emph{Some results on connecting orbits for a class of Hamiltonian systems}, 
 Math. Z. {\bf 206} (1991), no 1, 473--479.
 
\bibitem{rabinowitz1993homoclinic} P.H. Rabinowitz,
\emph{Homoclinic and heteroclinic orbits for a class of {H}amiltonian systems},
  Calc. Var. Partial Differential Equations {\bf 1} (1993), no. 1, 1--36.
  
\bibitem{rabinowitz2000results} \underline{\hskip3.5em} , 
\emph{On a theorem of Strobel}, Calc. Var. Partial Differential Equations {\bf 12} (2001), no.~4, 399--415.
  
\bibitem{[Saif]} H. Seifert, 
\emph{Periodische Bewegungen mechanischer Systeme.} Math. Z. {\bf 51} (1948), 197--216.
 
\bibitem{sternberg2016heteroclinic} P. Sternberg and A. Zuniga,
\emph{On the heteroclinic connection problem for multi-well potentials with several global minima},
J. Differential Equations {\bf 261} (2016), no. 7, 3987--4007.

\bibitem{[W]} A. Weinstein, 
\emph{Periodic orbits for convex hamiltonian systems}, Ann. of Math.(2) {\bf 108} (1978), no.~3, 507--518.

\bibitem{zuniga2018thesis} A. Zuniga, {\em Geometric problems in the calculus of variations}, Ph.D thesis (2018)- Department of Mathematics, Indiana University. 150 pp. ISBN: 978-0438-09426-0. {\sf https://search.proquest.com/docview/2065161691}. 
\end{thebibliography}
\end{document}